\DeclareMathOperator{\divsymb}{div}
\DeclareMathOperator{\id}{id}
\DeclareMathOperator{\tr}{\rm tr}
\DeclareMathOperator{\vol}{Vol}
\DeclareMathOperator{\supp}{supp}
\DeclareMathOperator{\Ric}{Ric}
\DeclareMathOperator{\Rm}{\rm Rm}
\DeclareMathOperator{\Hess}{\mathrm{Hess}}
\renewcommand{\subset}{\subseteq}
\newcommand{\oN}{\overline{N}}
\newcommand{\ou}{\overline{u}}
\newcommand{\cf}{\widetilde{f}}
\newcommand{\eps}{\varepsilon}
\newcommand{\mA}{\mathcal{A}}
\newcommand{\mB}{\mathcal{B}}
\newcommand{\mD}{\mathcal{D}}
\newcommand{\mE}{\mathcal{E}}
\newcommand{\mG}{\mathcal{G}}
\newcommand{\mL}{\mathcal{L}}
\newcommand{\mR}{\mathcal{R}}
\newcommand{\mS}{\mathcal{S}}
\newcommand{\mT}{\mathcal{T}}
\newcommand{\mV}{\mathcal{V}}
\newcommand{\bN}{\mathbb{N}}
\newcommand{\bR}{\mathbb{R}}
\newcommand{\mc}{\mathcal}
\newcommand{\mr}{\mathrm}
\newcommand{\N}{\mathbb{N}}
\newcommand{\R}{\mathbb{R}}
\renewcommand{\subset}{\subseteq}
\newcommand{\defeq}{\mathrel{\mathop:}=}
\newcommand{\haus}{\mathcal{H}}
\newcommand{\dist}{\mathsf{d}}
\DeclareMathOperator{\RCD}{RCD}
\def\Xint#1{\mathchoice
{\XXint\displaystyle\textstyle{#1}}%
{\XXint\textstyle\scriptstyle{#1}}%
{\XXint\scriptstyle\scriptscriptstyle{#1}}%
{\XXint\scriptscriptstyle\scriptscriptstyle{#1}}%
\!\int}
\def\XXint#1#2#3{{\setbox0=\hbox{$#1{#2#3}{\int}$ }
\vcenter{\hbox{$#2#3$ }}\kern-.6\wd0}}
\def\dashint{\Xint-}
\def\sideremark#1{\ifvmode\leavevmode\fi\vadjust{\vbox to0pt{\vss
 \hbox to 0pt{\hskip\hsize\hskip1em
 \vbox{\hsize3cm\tiny\raggedright\pretolerance10000
 \noindent #1\hfill}\hss}\vbox to8pt{\vfil}\vss}}}
\newtheorem{theorem}{Theorem}[section]
\newtheorem{proposition}[theorem]{Proposition}
\newtheorem{lemma}[theorem]{Lemma}
\newtheorem{corollary}[theorem]{Corollary}
\theoremstyle{definition}
\newtheorem{remark}[theorem]{Remark}
\numberwithin{equation}{section}
\title{Uniqueness of the asymptotic limits for Ricci-flat manifolds with linear volume growth II}
\author[Z. Yan]{Zetian Yan}
\address[Z. Yan]{Department of Mathematics \\ UC Santa Barbara \\ Santa Barbara \\ CA 93106 \\ USA}
\email{ztyan@ucsb.edu}
\author[X. Zhu]{Xingyu Zhu}
\address[X. Zhu]{Department of Mathematics \\ Michigan State University \\ East Lansing \\ MI 48824 \\ USA}
\email{zhuxing3@msu.edu}
\keywords{Ricci flat, linear volume growth, monotonicity, uniqueness, asymptotic limit} 
\subjclass[2020]{53C21,53C25}
\begin{document}

\begin{abstract}
We relate the uniqueness of asymptotic limits for noncollapsed Ricci-flat manifolds with linear volume growth to the existence of a harmonic function asymptotic to a Busemann function. 

Parallel to the work of Colding--Minicozzi in the Euclidean volume growth setting, we prove uniqueness of the asymptotic limit and establish a quantitative polynomial convergence rate via a monotone quantity associated with this harmonic function, assuming such harmonic function exists and one asymptotic limit is smooth.

Conversely, for an open manifold with nonnegative Ricci curvature, we show that uniqueness of the asymptotic limit implies the existence of the desired harmonic function, without assuming smoothness of the cross section.
\end{abstract}

\maketitle
\setcounter{tocdepth}{1}
\tableofcontents

\section{Introduction}
\subsection{Motivation and Statements of Main results} For an Riemannian $n$-manifold $(M,g)$ with $\Ric_g\ge 0$, the Bishop--Gromov volume comparison theorem asserts that for any point $p\in M$, the volume quotient 
\begin{equation}\label{eq:Monontone}
r^{-n}\vol_g(B_r(p))
\end{equation}
is monotone non-increasing in the radius $r$. If the limit of \eqref{eq:Monontone} as $r\to \infty$, denoted by $V_M$, is positive, $M$ is said to have Euclidean volume growth. This is the maximal volume growth for manifolds with nonnegative Ricci curvature. By Gromov's precompactness theorem, for any rescaling sequence of metrics $(M,r_i^{-2}g, p)$, where $r_i\to\infty$, there is a subsequence that converges in the Gromov–Hausdorff sense to a geodesic metric space $(X,\dist, x)$, which is called an asymptotic cone of $M$. When $M$ has Euclidean volume growth, Cheeger--Colding \cite{Cheeger-Colding97I} showed that each $(X,\dist)$ is a metric cone. The proof hinges on the fact that in the rescaled metric $r_i^{-2}g$, the monotone quantity \eqref{eq:Monontone} approaches $V_M$ when $i\to\infty$. In general, an asymptotic cone $(X,\dist)$ may depend on the choice of the scaling sequence $r_i$. There are various examples of $M$ with non-unique asymptotic cones \cites{PerelmanCone, Cheeger-Colding97I, CN11}. If one strengthens $\Ric_g\ge 0$ to $\Ric_g=0$, that is, if $(M,g)$ is a Ricci-flat manifold, then methods from geometric PDEs become available. In this case, the uniqueness of asymptotic cones has been extensively studied \cites{Cheeger-Tian1994,ColdingMinicozziUniqueness,HuangOzuch}. In particular, a remarkable result proven by Colding--Minicozzi \cite{ColdingMinicozziUniqueness} is that if one asymptotic cone is smooth, then it is unique. The key observations in \cite{ColdingMinicozziUniqueness} are that if a smoothed version of \eqref{eq:Monontone} constructed in \cite{ColdingMontonicity} converges sufficiently fast to its limit as $r\to\infty$, then the uniqueness follows, and that the Ricci flatness provides sufficient analytic methods (e.g. \L{}ojasiewicz--Simon inequality) to obtain the desired decay rate of this smoothed version of \eqref{eq:Monontone}.   

On the other hand, the study of maximal volume growth motivates that of minimal volume growth. Calabi and Yau independently showed that for complete noncompact Riemannian $n$-manifold $(M,g)$ with $\Ric_g\ge 0$, the minimal volume growth order is \emph{linear}; see, for example, \cite{yaulinearvolumegrowth}. The manifold $(M,g)$ is said to have linear volume growth if, for some (hence any) base point $x\in M$,
\begin{equation}\label{eq:linear-volume-growth}
 \limsup_{r\to \infty}\frac{\vol_g(B_r(x))}{r}<\infty.      
\end{equation}    
In addition, we say $M$ is \emph{noncollapsed} if $\inf_{x\in M}\vol_g(B_1(x))>0$. Note that if $(M,g)$ has Euclidean volume growth then it is automatically noncollapsed by the monotonicity of \eqref{eq:Monontone}. The study of manifolds with linear growth is pioneered by Sormani \cites{SormaniMiniVol,SormaniSublinear,SormaniHarmonic}. In this setting, instead of the rescaling sequences, it is more interesting to consider the \emph{translation sequences}. Gromov's precompactness theorem implies that any divergent \emph{translation sequence} $(M,g,p_i)$, meaning that $\dist_g(p_i,p)\to\infty$ for any fixed $p\in M$, has a convergent subsequence in the pointed Gromov--Hausdorff sense (pGH for short) to a Ricci limit space $(X,\dist,x)$, which is called an \emph{asymptotic limit} of $M$. It was shown in \cite{Zhu2025}*{Theorem~1.2} that if $(M,g)$ is noncollapsed and has linear volume growth, then its asymptotic limits are metric cylinders, that is, metric spaces of the form $\R \times N$ equipped with the product metric, where $N$ is a compact metric space (In fact, it is a noncollapsed $\RCD(0,n-1)$ space). The metric cylinder structure, just like the metric cone structure in the case of Euclidean volume growth associated with~\eqref{eq:Monontone}, is associated with a monotone quantity defined in terms of a Busemann function.

Let $\gamma : [0,\infty) \to M$ be a ray. The Busemann function $b_\gamma$ associated with $\gamma$ is defined by
\[
b_\gamma \defeq \lim_{t \to \infty} \bigl( t - \dist_g(x,\gamma(t)) \bigr).
\]
Sormani \cite{SormaniMiniVol}*{Lemma~20} showed that when $M$ has linear volume growth, both the volume quotient
\[
\frac{\vol_g\bigl(\{0 \le b_\gamma \le t\}\bigr)}{t}
\]
and the surface area
\begin{equation}\label{eq:MonotoneArea}
    \haus^{n-1}\bigl(\{b_\gamma = t\}\bigr)
\end{equation}
are monotone non-decreasing for $t \ge 0$, and that their limits as $t \to \infty$ exist, are finite, and coincide. We denote this common limit by $V_\infty$. Combining \cite{SormaniSublinear}*{Theorem~34} and \cite{Zhu2025}*{Proposition~3.5}, the metric cylinder structure in the asymptotic limits hinges on the fact that the monotone quantity~\eqref{eq:MonotoneArea} converges to $V_\infty$ along the divergent translation sequence $p_i \to \infty$.

The above discussion reveals a close analogy between the metric cone structure of asymptotic cones in the case of Euclidean volume growth and the metric cylinder structure of asymptotic limits in the case of linear volume growth. It is therefore natural to expect that the theory developed for the uniqueness of rescaling limits may extend to translation limits. As in the case of asymptotic cones, the limit cylinder may depend on the choice of the translation sequence $\{p_i\}$. In general, asymptotic limits need not be unique when only assuming nonnegative Ricci curvature, and the constructions of counterexamples resembles those in the conical setting; see, for example, \cite{SormaniMiniVol}*{Example~27} and \cite{Zhu2025}*{Theorem~5.1}.

In our previous work \cite{YanZhuuniqueness}, we extended the methods of Cheeger--Tian \cite{Cheeger-Tian1994} to the linear volume growth setting which originate in the study of the uniqueness of tangent cones for minimal surfaces by Simon \cite{SiomonAsymptotics} and by Allard--Almgren \cite{AllardAlmgren}. In that work we had to impose additional technical assumptions, including integral curvature bounds, integrability of the Ricci flat metric on the cross section $N$ and nonnegativity of the Lichnerowicz Laplacian on $N$.

The primary goal of this paper is to establish results on the uniqueness of asymptotic limits and the rate of convergence to them for Ricci flat manifolds with linear volume growth, paralleling the corresponding results for asymptotic cones in \cite{ColdingMinicozziUniqueness}. In doing so, we are able to remove the technical assumptions described above. However, we will also introduce a new technical assumption.

In \cite{ColdingMontonicity}, where $(M,g)$ has Euclidean volume growth, a smoothed version of the monotone quantity~\eqref{eq:Monontone} is constructed using the Green distance function $b := G^{\frac{1}{2-n}}$, where $G$ denotes the Green's function of $M$. In contrast, manifolds with linear volume growth are parabolic, so they admit no positive Green's function. Consequently, in order to construct a smoothed version of the monotone quantity~\eqref{eq:MonotoneArea} at least on one end of $(M,g)$, we require a harmonic function $u$ defined on that end which is asymptotic to the Busemann function.

The existence of such a function $u$ is not known in general, and we therefore impose its existence as a technical assumption. Nevertheless, we will show that if the asymptotic limit is unique, without assuming smoothness of the cross section $N$, then such a harmonic function $u$ exists.

\begin{theorem}[Uniqueness at infinity]\label{thm:uniqueness}
Let $(M,g)$ be a noncollapsed Ricci flat manifold with linear volume growth. Fix a ray $\gamma$ and let $b_\gamma$ denote the associated Busemann function. Suppose that the following conditions hold.
\begin{enumerate}
    \item\label{cond:smooth}
    One asymptotic limit is smooth, in the sense that there exists a sequence $\{t_i\}_{i=1}^{\infty}$ with $t_i \to \infty$ such that
    \begin{equation}\label{eq:convergent-subsequence-GH-sense}
        (M,g,\gamma(t_i)) \xrightarrow[]{\mathrm{pGH}} 
        (\overline{N} \defeq \R \times N,\, dt^2 + g_N,\,(0,x)),
    \end{equation}
    where $(N,g_N)$ is a smooth closed Ricci flat manifold.
    
    \item\label{cond:existence}
    There exists a harmonic function $u$ defined on the end $\{b_\gamma \ge 0\}$ that is asymptotic to $b_\gamma$, in the sense that for any $\varepsilon > 0$ there exists $R_0 \ge 0$ such that
    \[
        |u(x) - b_\gamma(x)| < \varepsilon
        \quad \text{for all } x \in \{b_\gamma \ge R_0\}.
    \]
\end{enumerate}
Then the asymptotic limit of $(M,g)$ is unique. Moreover, there exist constants $C \ge 1$ and $\tilde \beta > 0$ such that, for all sufficiently large $t > 0$, the Gromov--Hausdorff distance satisfies the decay estimate
\[
    \dist_{\mr{GH}}\bigl([0,2]\times N,\ \{t \le u \le t+2\}\bigr)
    \le C\, t^{-\tilde \beta}.
\]
\end{theorem}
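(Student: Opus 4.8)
The plan is to transplant the Colding--Minicozzi argument \cite{ColdingMinicozziUniqueness} from the conical setting to the cylindrical one, with the harmonic function $u$ of the existence hypothesis playing the role that the Green distance $G^{1/(2-n)}$ plays in \cite{ColdingMontonicity}. First I would record the elementary regularity of $u$: since $u$ is harmonic with $|u-b_\gamma|\to0$, elliptic gradient estimates together with $|\nabla u|\to1$ on the asymptotic limit show that, on $\{b_\gamma\ge R_1\}$ for $R_1$ large, $|\nabla u|$ is pinched between two positive constants, $u$ is proper with compact level sets $\Sigma_t\defeq\{u=t\}$, and $u$ has no critical points there. Next, modeled on \cite{ColdingMontonicity}, I would introduce a gradient-weighted area
\[
  \mA(t)\defeq\int_{\Sigma_t} w\bigl(|\nabla u|\bigr)\, d\haus^{n-1},
\]
and prove, via the Bochner formula and $\Ric_g=0$, that $\mA$ is monotone non-decreasing with $\mA'(t)=\int_{\Sigma_t} w_1\bigl(|\nabla u|\bigr)\,|\Hess u|^2\, d\haus^{n-1}\ge0$ for a positive weight $w_1$; here $|\Hess u|$ is exactly the infinitesimal defect of a collar of $\Sigma_t$ from being a metric product $\R\times N$ (on such a product $u$ is affine, so $\Hess u\equiv0$). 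Comparing $\mA$ with Sormani's monotone area $\haus^{n-1}(\{b_\gamma=t\})$ through $u\sim b_\gamma$ and $|\nabla u|\to1$ identifies $\lim_{t\to\infty}\mA(t)=V_\infty$, so the deficit $D(t)\defeq V_\infty-\mA(t)\ge0$ decreases to $0$ with $-D'(t)=\int_{\Sigma_t} w_1|\Hess u|^2$.

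The analytic input is a \L{}ojasiewicz--Simon inequality. Normalizing $\Sigma_t$ (rescaling to unit flux, equivalently to fixed volume) produces a metric $h_t$ on the closed $(n-1)$-manifold underlying $\Sigma_t$, and the relevant functional $\mF$ is a geometric functional on such metrics --- the normalized Einstein--Hilbert functional, or equivalently a restriction of Perelman's $\nu$-functional --- whose critical points are the Ricci-flat metrics and for which $(N,g_N)$ is a critical point. Since $\mF$ is analytic with Fredholm Hessian, there are $\theta\in(0,\tfrac12]$ and $C,\sigma>0$ with
\[
  |\mF(h)-\mF(g_N)|^{1-\theta}\le C\,\|\nabla\mF(h)\|\qquad\text{whenever}\quad \|h-g_N\|_{C^{k,\alpha}}<\sigma.
\]
This is precisely the form that dispenses with the integrability and Lichnerowicz-nonnegativity hypotheses imposed in \cite{YanZhuuniqueness}. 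One then checks, via the structure equations of $\Sigma_t\subset M$ and $\Ric_g=0$, and up to errors that are genuinely lower order in $D(t)$ and in the elliptic norms of $u$, that $D(t)=\mF(h_t)-\mF(g_N)+\text{(lower order)}$ and that $-D'(t)=\int_{\Sigma_t} w_1|\Hess u|^2$ dominates $\|\nabla\mF(h_t)\|^2$; with the \L{}ojasiewicz inequality this yields
\[
  D(t)^{2(1-\theta)}\le C\,\bigl(-D'(t)\bigr)\qquad\text{whenever }\Sigma_t\text{ is }\sigma\text{-close to }(N,g_N).
\]

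It remains to activate and propagate this dichotomy. By the smoothness hypothesis, $(M,g,\gamma(t_i))\to(\R\times N,dt^2+g_N)$ in the pointed Gromov--Hausdorff sense; since $M$ is noncollapsed and Ricci-flat, the Anderson--Cheeger--Colding $\eps$-regularity theory upgrades this to $C^{k,\alpha}$-convergence on compact sets, so for $i$ large $\Sigma_{t_i}$ lies in the region converging to $\{0\}\times N$ and, after normalization, is a graph over $(N,g_N)$ with $C^{k,\alpha}$-norm less than $\tfrac12\sigma$; fix such an $i$ with $D(t_i)$ also tiny. The set of $t\ge t_i$ for which $\Sigma_t$ is $\sigma$-close to $(N,g_N)$ is open and nonempty; on it, integrating $D^{2(1-\theta)}\le C(-D')$ gives, for $\theta<\tfrac12$, the decay $D(t)\le C\,t^{-1/(1-2\theta)}$, and Cauchy--Schwarz on dyadic intervals then gives $\int_t^\infty\!\bigl(-D'(s)\bigr)^{1/2}\,ds\le C\,t^{-\tilde\beta}$ with $\tilde\beta=\theta/(1-2\theta)>0$; by the flux normalization the left side bounds the $C^{k,\alpha}$-length of the path $s\mapsto h_s$, which both keeps $h_t$ inside the $\sigma$-ball --- so the set is also closed, hence equals $[t_i,\infty)$, the standard Simon continuation argument --- and forces $h_t$ to converge at rate $t^{-\tilde\beta}$ to a Ricci-flat metric, necessarily $g_N$ by the hypothesis along $\{t_i\}$. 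Finally, since $u$ is harmonic with $|\nabla u|\to1$, elliptic estimates convert the $C^{k,\alpha}$-closeness of $\Sigma_t$ to $(N,g_N)$ into $C^{k,\alpha}$-, hence $\dist_{\mr{GH}}$-, closeness of the collar $\{t\le u\le t+2\}$ to $([0,2]\times N,dt^2+g_N)$ at the rate $C\,t^{-\tilde\beta}$, which is the claimed estimate; and any asymptotic limit, being a pointed limit of $(M,g,\gamma(s_j))$ with $\gamma(s_j)$ within bounded distance of $\Sigma_{b_\gamma(\gamma(s_j))}$ and $|u-b_\gamma|\to0$, is forced to be $(\R\times N,dt^2+g_N)$, independent of $\{s_j\}$, proving uniqueness.

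The step I expect to be the main obstacle is the construction underlying the first two paragraphs: pinning down the \emph{exact} weighted area $\mA$ whose $t$-derivative is a clean squared-defect integral, and pairing it with the \emph{right} analytic functional $\mF$, so that simultaneously the deficit $D(t)$ agrees with $\mF(h_t)-\mF(g_N)$ up to genuinely lower-order terms and the localized energy $\int_{\Sigma_t}w_1|\Hess u|^2$ controls $\|\nabla\mF(h_t)\|^2$ via the hypersurface structure equations and $\Ric_g=0$. This is the cylindrical counterpart of Colding's monotonicity formula and of the Colding--Minicozzi \L{}ojasiewicz inequality, and the parabolicity of $M$ --- which forces the substitute harmonic function $u$, and its elliptic error terms, into every estimate --- is what makes the bookkeeping delicate. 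A secondary point that must be settled is that $u$ really does have compact (and, componentwise, connected) level sets with nowhere-vanishing gradient on the end, without which the collar $\{t\le u\le t+2\}$ need not be modeled on a single cylinder.
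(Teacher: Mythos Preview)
Your overall architecture --- a monotone quantity built from level sets of $u$, approximated by a scalar-curvature functional on the cross-section for which a \L{}ojasiewicz--Simon inequality holds, then an ODI-to-decay argument followed by a Simon-type continuation --- is exactly the paper's. The gap is precisely where you suspect, but it is structural rather than bookkeeping.

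There is no weighted area $\mA(t)=\int_{\Sigma_t} w(|\nabla u|)\,d\haus^{n-1}$ with $\mA'(t)=\int_{\Sigma_t} w_1|\Hess u|^2$: differentiating any such quantity gives an expression \emph{linear} in $\Hess u$. With $w(s)=s^3$ one finds the boundary form $\mA'(t)=2\int_{\Sigma_t}\Hess u(\nabla u,\nu)$; only after integrating by parts over the unbounded region $\{u\ge t\}$ (Bochner plus $\Ric=0$) does this become $\mA'(t)=-2\int_{\{u\ge t\}}|\Hess u|^2$, and it is $\mA''(t)=2\int_{\Sigma_t}|\Hess u|^2/|\nabla u|$ that is the level-set Hessian integral. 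The \L{}ojasiewicz inequality and the ODI therefore live one derivative higher than you propose: the quantity matched to the functional is $\mA'$, not the area deficit $V_\infty-\mA$, and the inequality reads $(-\mA'(t))^{2-\alpha}\le C\bigl(\mA'(t+3)-\mA'(t-1)\bigr)$. Your identification $D=V_\infty-\mA\approx \mF(h_t)-\mF(g_N)$ fails on scaling grounds: $V_\infty-\mA(t)=\int_t^\infty(-\mA')$ is an integrated bulk Hessian over the whole end, whereas $\mF(h_t)-\mF(g_N)$ is quadratic in the defect on a single slice; these decay at different rates. (A bonus of working at the level of $\mA'$: since $\mA''\ge0$, the quantity $\mA'$ is itself monotone, so no auxiliary functional analogous to Colding--Minicozzi's $Q$ is needed in the cylindrical setting.)

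Second, the functional is not Einstein--Hilbert on metrics alone but the \emph{weighted} functional $\mE(g,w)=\int_N R_g\,w\,dV_g$ on pairs, restricted to $\int_N w\,dV_g=\vol_{g_N}(N)$; one evaluates at $(g_t,\,|\nabla u|)$, and the constraint is satisfied automatically because $\mS(t)=\int_{\Sigma_t}|\nabla u|\,dA$ is \emph{constant} in $t$. Your proposal to rescale $g_t$ to fixed volume instead introduces a $t$-dependent homothety (the unweighted area of $\Sigma_t$ is only monotone, not constant), and the drift $|\lambda_t-1|\sim\bigl||\nabla u|-1\bigr|$ contaminates both the matching $\mA'\approx\mE$ and the gradient bound $|\nabla_1\mE|^2\le C\int|\Hess u|^2$ with terms that are not obviously lower order at the stage where you need them.
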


In fact, the uniqueness follow from an effective version of Theorem~\ref{thm:uniqueness}, stated as follows.

\begin{theorem}[Effective uniqueness]\label{thm:effective-uniqueness}
In the setting of Theorem~\ref{thm:uniqueness}, there exist constants $\eps,\delta,\tilde \beta>0$ and $C>1$ such that the following holds. 
If $\mc A(t)\defeq \int_{\{u=t\}}|\nabla u|\,dV_g$ satisfies

\[
\mc A(t_1-C)-\mc A(t_2+C)<\eps
\]
for some $1<t_1\ll t_2$, and if for every $t\in [t_1-C,t_1+C]$ one has
\begin{equation}\label{eq:epsilon-closeness}
   \dist_{\mr{GH}}\bigl([0,2]\times N,\ \{t \le u \le t+2\}\bigr)<\delta,
\end{equation}
then:
\begin{enumerate}[label=\textnormal{E.\arabic*}]
    \item For every $t\in [t_1,t_2]$,
    \[
        \dist_{\mr{GH}}\bigl([0,2]\times N,\ \{t \le u \le t+2\}\bigr)<4\delta.
    \]
    \item There exists a metric cylinder $\R \times N_0$ such that for every $t\in [t_1,t_2]$,
    \[
        \dist_{\mr{GH}}\bigl([0,2]\times N_0,\ \{t \le u \le t+2\}\bigr)
        \le C\,(t-t_1)^{-\tilde \beta}.
    \]
\end{enumerate}
\end{theorem}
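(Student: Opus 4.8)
The plan is to transplant the Colding--Minicozzi uniqueness scheme \cite{ColdingMinicozziUniqueness} from the conical setting---where one uses the Green distance and Colding's monotonicity formula \cite{ColdingMontonicity}---to the cylindrical one, with the harmonic function $u$ playing the role of the Green distance and a smoothed version of Sormani's area functional~\eqref{eq:MonotoneArea} playing the role of Colding's monotone quantity. \textbf{Step 1 (monotonicity with a defect).} Using $\Ric_g=0$ and $\Delta_g u=0$ in a Bochner/Reilly-type identity along the foliation $\Sigma_t\defeq\{u=t\}$ (for $t$ large these are smooth closed hypersurfaces Gromov--Hausdorff-close to $\{t\}\times N$), I would first record the monotonicity formula: $\mc A$ is non-increasing for $t$ large with $\mc A(t)\searrow V_\infty$, and there is a nonnegative defect density $\mc D(t)=-\mc A'(t)$ built from the traceless second fundamental form of $\Sigma_t$ together with the tangential part of $\nabla|\nabla u|$, so that $\mc D(t)=0$ forces the slab around $\Sigma_t$ to split off a line over a Ricci-flat cross section, and $\mc A(t_1)-\mc A(t_2)=\int_{t_1}^{t_2}\mc D(s)\,ds$. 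The buffer $C$ is chosen so that $\Sigma_t$ for $t\in[t_1,t_2]$ and all the elliptic estimates for $u$ that enter below depend only on $u|_{\{t_1-C\le u\le t_2+C\}}$; the hypothesis $\mc A(t_1-C)-\mc A(t_2+C)<\eps$ together with monotonicity then yields $\int_{t_1-C}^{t_2+C}\mc D<\eps$.

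\textbf{Step 2 ($\eps$-regularity and the cross-section curve).} On any subinterval where the slab $\{t\le u\le t+2\}$ is $\delta$-close to $[0,2]\times N$ in the Gromov--Hausdorff sense, noncollapsing together with $\Ric_g=0$ and the $\eps$-regularity theorem \cite{Cheeger-Colding97I} upgrade this to $C^k$-closeness (for any prescribed $k$, after shrinking $\delta$) of the slab to a product $ds^2+h(t)$ over $N$, with $h(t)$ a metric $C^k$-close to $g_N$; pushing forward along the normalized gradient flow of $u$ identifies these cross sections, so that $t\mapsto h(t)$ is a well-defined $C^1$ curve in the space of metrics on $N$, with speed $\lVert\partial_t h(t)\rVert\asymp\sqrt{\mc D(t)}$. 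In these coordinates one then checks the two comparisons $\mc D(t)\asymp\lVert\mc G(h(t))\rVert^2_{L^2(N)}$ and $\mc A(t)-V_\infty\asymp|\mE(h(t))-\mE(g_N)|$, where $\mE$ is the diffeomorphism-invariant functional on metrics on $N$ whose critical points near $g_N$ are exactly the Ricci-flat metrics and $\mc G=\nabla_{L^2}\mE$ its $L^2$-gradient; this is the cylindrical incarnation of the principle that the monotonicity formula is, to leading order, the negative $L^2$-gradient flow of $\mE$ on cross sections.

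\textbf{Step 3 (\L ojasiewicz--Simon, decay rate, and propagation).} Because $g_N$ is smooth, the \L ojasiewicz--Simon inequality applies to $\mE$ near $g_N$: $|\mE(h)-\mE(g_N)|^{1-\theta}\le C\lVert\mc G(h)\rVert_{L^2(N)}$ for some $\theta\in(0,\tfrac12]$ on a $C^k$-neighborhood of $g_N$. Writing $f(t)=\mc A(t)-V_\infty\ge0$, Steps~1--2 give $-f'(t)\gtrsim f(t)^{2-2\theta}$ wherever the slab is $\delta$-close to $[0,2]\times N$, hence $f(t)\le C(t-t_1)^{-1/(1-2\theta)}$ (exponential if $\theta=\tfrac12$) and the \L ojasiewicz length bound $\int_t^{t_2}\sqrt{\mc D}\le Cf(t)^\theta$. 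A continuity/bootstrap argument then closes the loop: on the maximal $T^\ast\le t_2$ with the slab $4\delta$-close to $[0,2]\times N$ throughout $[t_1,T^\ast)$, Step~2 applies there, so the Gromov--Hausdorff drift of the slab over $[t_1,\cdot)$ is bounded by $C\int_{t_1}^{\cdot}\sqrt{\mc D}\le Cf(t_1)^\theta$, which is $\ll\delta$ since \eqref{eq:epsilon-closeness} forces $f(t_1)\le\omega(\delta)$ (via Step~1, for a modulus $\omega$ with $\omega(0^+)=0$); thus the slab stays within $2\delta<4\delta$, forcing $T^\ast=t_2$ and proving (E.1). Then Step~2 holds on all of $[t_1,t_2]$, the curve $h(t)$ is Cauchy, and taking $N_0$ to be $N$ equipped with the resulting limiting Ricci-flat metric gives, for $t\in[t_1,t_2]$,
\[
\dist_{\mr{GH}}\bigl([0,2]\times N_0,\ \{t\le u\le t+2\}\bigr)\ \le\ C\int_t^{t_2}\sqrt{\mc D}\ \le\ Cf(t)^\theta\ \le\ C(t-t_1)^{-\tilde\beta},
\]
which is (E.2), with $\tilde\beta=\theta/(1-2\theta)$ (any $\tilde\beta>0$ when $\theta=\tfrac12$).

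\textbf{The main obstacle} is the analytic core inside Steps~2--3: proving the \L ojasiewicz--Simon inequality for $\mE$ at the \emph{smooth} Ricci-flat metric $g_N$---where the smoothness of the cross section is essential, and one argues as in \cite{ColdingMinicozziUniqueness} from the analyticity of the Ricci/Einstein operator and elliptic theory on the closed manifold $N$---and establishing the two comparisons $\mc D(t)\asymp\lVert\mc G(h(t))\rVert_{L^2}^2$, $\mc A(t)-V_\infty\asymp|\mE(h(t))-\mE(g_N)|$, i.e.\ identifying the cylindrical monotonicity formula for $u$ with the $L^2$-gradient flow of $\mE$ up to controlled errors. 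A subsidiary difficulty, needed to set up Step~2 at all, is to make the foliation by the level sets of the harmonic function $u$---rather than of the merely Lipschitz Busemann function $b_\gamma$---quantitatively smooth; this is precisely where the existence and interior elliptic regularity of $u$ and the bounded, noncollapsed geometry of the Ricci-flat metric at unit scale are used.
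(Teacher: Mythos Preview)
Your outline captures the Colding--Minicozzi architecture correctly: monotonicity with a defect, $\eps$-regularity to set up a curve of cross-sectional data, \L ojasiewicz--Simon on a functional over $N$, and a continuity/bootstrap to propagate the closeness. Your Step~3 propagation argument is essentially how the paper derives both (E.1) and (E.2) from the machinery of Theorem~\ref{thm:uniqueness}. However, the two comparisons you assert in Step~2 are placed at the wrong level, and this is where the paper's execution genuinely differs from yours.

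For the paper's $\mA(t)=\int_{\Sigma_t}|\nabla u|^3\,dA_g$, the local formula $\mA'(t)=2\int_{\Sigma_t}|\nabla u|\,\Hess u(\nu,\nu)\,dA_g$ has no sign; the nonnegativity of $-\mA'$ comes only from the \emph{nonlocal} identity $-\mA'(t)=2\int_{\{u\ge t\}}|\Hess u|^2\,dV_g$. So your description of $\mc D(t)=-\mc A'(t)$ as a nonnegative local defect on $\Sigma_t$, built from the traceless second fundamental form and $\nabla^T|\nabla u|$, does not hold. What \emph{is} local and nonnegative is $\mA''(t)=2\int_{\Sigma_t}|\Hess u|^2/|\nabla u|\,dA_g$. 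The paper exploits precisely this: since $\mA''\ge 0$ --- a simplification special to the cylindrical case and unavailable in the conical Colding--Minicozzi setting (Remark~\ref{rmk:CM'sA}) --- $\mA'$ is itself monotone, and the \L ojasiewicz argument is run \emph{one derivative up}, with $-\mA'$ playing the role of your $f$. Correspondingly, the approximating functional is the \emph{weighted} Einstein--Hilbert functional $\mE(g,w)=\int_N R_g\,w\,dV_g$, depending on both $g_t$ and $w=|\nabla u|$, and the correct comparisons are \ref{R4}--\ref{R5}: $|\mA'(t)-\mE(g_t,|\nabla u|)|\lesssim h_{[t-1,t+3]}$ and $|\nabla_1\mE|^2\lesssim h_{[t-1,t+3]}=\mA'(t+3)-\mA'(t-1)$. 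Your asserted comparison $\mc A(t)-V_\infty\asymp|\mE(h(t))-\mE(g_N)|$ fails for the natural choices: $\mA(t)-V_\infty=\int_{\Sigma_t}|\nabla u|(|\nabla u|^2-1)\,dA$ is first-order in $|\nabla u|-1$, while an unweighted $\mE(h(t))=\int R_{g_t}\,dV_{g_t}$ is, via the Gauss equation $R_{g_t}=H^2-|\mathrm{II}|^2$, second-order in $\Hess u$. The fix is exactly the paper's: shift the whole scheme from $\mA$ to $\mA'$ and carry the weight $|\nabla u|$ into $\mE$.
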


Finally, we establish an existence result for a harmonic function on one end that is asymptotic to a Busemann function, assuming that the asymptotic limit is unique. This existence theorem is inspired by work of Ding \cite{Ding2004existencetheorem}.

\begin{theorem}[Existence of a harmonic function asymptotic to a Busemann function]\label{thm:Existence-of-linear-growth-harmonic-function}
Let $(M,g)$ be an open, noncollapsed manifold with linear volume growth and $\Ric_g \ge 0$. Assume moreover that the asymptotic limit of $M$ is unique, in the sense that there exists a metric cylinder $\overline N = \R \times N$, where $N$ is a noncollapsed $\RCD(0,n-1)$ space, such that for any translation sequence $\{p_i\}$ one has
\[
    (M,g,p_i) \xrightarrow[]{\mathrm{pGH}} 
    (\overline N,\,(0,x)).
\]
Then there exists a harmonic function $u$ defined on an end of $M$ which is asymptotic to the Busemann function.
\end{theorem}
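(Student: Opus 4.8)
\medskip
\noindent\textit{Proof strategy.}
The plan is to adapt Ding's method \cite{Ding2004existencetheorem}: build $u$ as a monotone limit of solutions to Dirichlet problems on an exhaustion of one end by the harmonic extension of the Busemann function, and then use the uniqueness hypothesis to upgrade the resulting "cylindrical at infinity" harmonic function to one that is genuinely $C^0$-asymptotic to $b_\gamma$. Fix a ray $\gamma$ and let $E$ be the connected component of $\{b_\gamma\ge 0\}$ containing $\gamma([0,\infty))$. By the structure theory for linear volume growth (Zhu2025, \cite{Zhu2025}) together with Sormani's monotonicity \cite{SormaniMiniVol}, $E$ is one-ended, $b_\gamma$ is proper on $E$, the level sets $\{b_\gamma=t\}$ have uniformly bounded diameter, and $\Delta b_\gamma$ is a nonnegative measure with $(\Delta b_\gamma)(\{s_1<b_\gamma<s_2\})=\mathcal A(s_2)-\mathcal A(s_1)$, where $\mathcal A(t):=\haus^{n-1}(\{b_\gamma=t\})$; in particular $(\Delta b_\gamma)(\{b_\gamma>s\})=V_\infty-\mathcal A(s)\to 0$ and the total mass $(\Delta b_\gamma)(E)=V_\infty-\mathcal A(0)$ is finite. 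Choose smooth compact domains $D_i\nearrow E$ with inner boundary $\{b_\gamma=1\}$ and outer boundary near $\{b_\gamma=T_i\}$, $T_i\to\infty$, and let $u_i$ be the harmonic extension of $b_\gamma|_{\partial D_i}$. Since Busemann functions are subharmonic for $\Ric_g\ge 0$, $w_i:=u_i-b_\gamma\ge 0$ solves $\Delta w_i=-\Delta b_\gamma$ with $w_i|_{\partial D_i}=0$, and $w_i$ is nondecreasing in $i$ by domain monotonicity of the Green's function.

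Passing to the limit is routine once one controls the Green's function of the end. Writing $w_i(x)=\int_{D_i}G_{D_i}(x,\cdot)\,d(\Delta b_\gamma)$ and $G_{D_i}\nearrow G_E$, one notes that although $E$ is parabolic it has nonempty boundary, so its Dirichlet heat kernel decays like $t^{-3/2}$ and $G_E$ exists; comparison with the half-cylinder model $[1,\infty)\times N$ (whose radial Green's function is $\frac1{V_\infty}(\min(b,b')-1)$) gives $G_E(x,y)\le C(1+b_\gamma(x))$ for all $y$. Hence $w_\infty:=\lim_i w_i=\int_E G_E(x,\cdot)\,d(\Delta b_\gamma)\le C(1+b_\gamma(x))(V_\infty-\mathcal A(0))<\infty$, and by interior elliptic estimates $u_i\to u:=b_\gamma+w_\infty$ in $C^{1,\alpha}_{\mathrm{loc}}(E)$ with $u$ harmonic on $E$.

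It remains to show $u-b_\gamma=w_\infty\to 0$. Using the sharper asymptotics $G_E(x,y)=\frac1{V_\infty}\bigl(\min(b_\gamma(x),b_\gamma(y))-1\bigr)+O\!\bigl(e^{-c|b_\gamma(x)-b_\gamma(y)|}\bigr)$, which hold because the cross-section stabilizes to $N$ (here the uniqueness hypothesis is used to identify the limit cylinder), one computes $w_\infty(x)=\frac1{V_\infty}\int_1^{b_\gamma(x)}\bigl(V_\infty-\mathcal A(s)\bigr)\,ds+(\text{bounded})$. Thus $w_\infty$ is bounded if and only if $\int^\infty\bigl(V_\infty-\mathcal A(s)\bigr)\,ds<\infty$, equivalently $\Delta b_\gamma$ has finite first moment $\int_E b_\gamma\,d(\Delta b_\gamma)$; and if so, dominated convergence gives $w_\infty(x)\to L:=\frac1{V_\infty}\int_E(b_\gamma-1)\,d(\Delta b_\gamma)$ as $b_\gamma(x)\to\infty$, with oscillation on level sets tending to $0$, so that $u-L$ is harmonic and asymptotic to $b_\gamma$ in the sense of Theorem~\ref{thm:uniqueness}\eqref{cond:existence}. (Equivalently, the convergence $w_\infty\to L$ may be seen by a blow-down argument: along any divergent translation sequence $p_j$ in $E$, uniqueness forces both $u-u(p_j)$ and $b_\gamma-b_\gamma(p_j)$ to converge to the same splitting coordinate on $\mathbb R\times N$, which pins $w_\infty$ to a single value; this is also what rules out oscillation of $u-b_\gamma$ among the level sets.)

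\textit{The main obstacle} is precisely the quantitative estimate $\int^\infty\bigl(V_\infty-\mathcal A(s)\bigr)\,ds<\infty$. Sormani's theorem yields only $\mathcal A(t)\to V_\infty$, and for merely $\Ric_g\ge 0$ (no Ricci-flatness, no smoothness of $N$) there is no \L{}ojasiewicz--Simon inequality available to force a decay rate, so the summability must be extracted from the qualitative uniqueness statement directly. I would attempt this by a compactness/contradiction argument: if $\int^\infty\bigl(V_\infty-\mathcal A\bigr)=\infty$, then the defect measure $\Delta b_\gamma$ carries too much mass out to infinity, and invoking the quantitative splitting theorem of Cheeger--Colding on the noncollapsed slabs $\{t\le b_\gamma\le t+2\}$ one should be able to extract a divergent translation sequence whose asymptotic limit is not isometric to $\mathbb R\times N$, contradicting uniqueness. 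Making this precise---quantifying how a slow decay of $\mathcal A$ forces the cross-section to drift---is the heart of the argument and the step I expect to be most delicate; everything else (the Dirichlet exhaustion, the Green's function comparison on a cylindrical end, and the final Liouville-type convergence) is routine given the structure theory already in place.
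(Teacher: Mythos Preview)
Your approach via Dirichlet exhaustion with Busemann boundary data is genuinely different from the paper's, and the gap you yourself flag is real and, as stated, not closable. The integrability condition $\int^\infty(V_\infty-\mathcal A(s))\,ds<\infty$ is a \emph{rate} statement, while uniqueness of the asymptotic limit is purely qualitative. Your proposed contradiction argument---that non-summable area defect forces the cross-section to ``drift'' and hence produces a second asymptotic limit---does not work: one can imagine (and, with care, construct under $\Ric\ge 0$) ends whose level sets are all GH-close to the same $N$ but whose areas approach $V_\infty$ only logarithmically, e.g.\ like $V_\infty-\mathcal A(t)\sim c/\log t$. Such an end has a unique asymptotic limit $\mathbb R\times N$ yet fails your summability. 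The Cheeger--Colding almost-splitting theorem on slabs only tells you the slab is GH-close to \emph{some} cylinder; it gives no mechanism to detect that the cylinder has changed when the area defect is small but slowly decaying. Likewise, your Green's function asymptotic $G_E(x,y)=\tfrac{1}{V_\infty}(\min(b_\gamma(x),b_\gamma(y))-1)+O(e^{-c|b_\gamma(x)-b_\gamma(y)|})$ presupposes a rate of convergence to the model cylinder that you do not have. The parenthetical blow-down argument fares no better: it shows $u-u(p_j)$ and $b_\gamma-b_\gamma(p_j)$ both limit to linear functions of $r$, but without first knowing $w_\infty$ is bounded you cannot conclude the slopes agree, and even if they do you only get $w_\infty-w_\infty(p_j)\to 0$ along each sequence, which does not pin down a single limiting value.

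The paper sidesteps the rate issue entirely by working in the opposite direction: rather than pushing $b_\gamma$ outward via Dirichlet problems, it \emph{transplants} the splitting coordinate $r$ from the limit cylinder back to $M$. Concretely, harmonic replacement (stability of Dirichlet problems under measured GH convergence, \`a la Ambrosio--Honda) produces, for each large $L$, harmonic functions on tubes $\mathcal T_{[R_0,R_0+L]}$ that are $H^{1,2}$-close to $r$. A three-circles inequality on the exact cylinder (proved by separation of variables, using the spectral gap $\mu_1>0$ of $N$) is then shown to be stable under small GH perturbation by a compactness/contradiction argument---and here uniqueness is used only to guarantee that \emph{every} far-out tube is uniformly close to the same model, so the three-circles constants are uniform in $L$. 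This growth control replaces your integrability condition: it gives uniform local $H^{1,2}$ bounds as $L\to\infty$, hence a limiting harmonic $u$ on the end, and the three-circles inequality simultaneously rules out degeneration to a constant and forces $u$ to track $r$ (hence $b_\gamma$) at infinity. The trade-off is that the paper needs the RCD machinery for harmonic replacement on the possibly singular limit, whereas your route stays on $M$ throughout; but your route appears to require a quantitative input that uniqueness alone does not supply.
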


\begin{remark}
Returning to the Ricci flat case, by combining Theorem~\ref{thm:uniqueness} and Theorem~\ref{thm:Existence-of-linear-growth-harmonic-function}, we infer that if an open Ricci flat manifold admits a unique smooth cylindrical asymptotic limit of the same dimension (and hence has linear volume growth), then the minimal rate of convergence to this limit is polynomial. Examples of such manifolds arise in a variety of settings. For instance, there are asymptotically cylindrical Calabi--Yau manifolds \cite{HaskinsHeinNordstrom}, in particular in complex dimension $3$ \cite{Haskins3foldsExamples}, asymptotically cylindrical $G_2$-manifolds \cite{NordstromG2}, and ALH instantons \cites{HeinInstanton,BiquardMinerbeInstanton}. However, all known examples of this type have integrable cross sections and the Lichnerowicz Laplacian on the cross sections are nonnegative. As a result of \cite{YanZhuuniqueness}, we showed that for the aforementioned examples, there exists a gauge in which the convergence rate to the asymptotic limit is in fact exponential. At present, it is not known whether there exist examples for which the optimal convergence rate to the asymptotic limit is polynomial. This question is closely related to the existence of a closed Ricci-flat manifold with full holonomy.
  
\end{remark}

\subsection{Sketch of proof: monotonicity and uniqueness}

We outline the construction of a monotone quantity and explain how it leads to the uniqueness of asymptotic limits. Let $(M,g)$ be an open, noncollapsed, Ricci-flat manifold with linear volume growth, as in Theorem~\ref{thm:uniqueness}. 

First, the uniqueness of asymptotic limits means that the pointed Gromov--Hausdorff limit of any sequence $(M,g,p_i)$, with $p_i \to \infty$, if it exists, is independent of the choice of the divergent sequence $\{p_i\}$. By the reduction already used in the statement of Theorem~\ref{thm:uniqueness}, it suffices to consider sequences $\{p_i\}$ lying on a fixed ray $\gamma$; see \cite{Zhu2025}*{Theorem~1.6}. It was observed by Sormani \cite{SormaniSublinear}*{Remark~44} that if the monotone quantity~\eqref{eq:MonotoneArea} converges sufficiently fast as $t \to \infty$, then $(M,g)$ is asymptotic to a unique metric cylinder. However, the quantity~\eqref{eq:MonotoneArea} is only Lipschitz continuous, which makes it difficult to extract quantitative convergence rates.

To overcome this difficulty, we first smooth~\eqref{eq:MonotoneArea} using the harmonic function $u$ defined on the end of $M$. Consider the weighted area of the level sets of $u$:
\[
    \mS(t) \defeq \int_{\{u=t\}} |\nabla u| \, dA_g.
\]
A direct computation in Section~\ref{subsec:functionalE} shows that $\mS'(t)=0$, and hence
\[
    \mS(t)
    = \lim_{t \to \infty} \mS(t)
    = \lim_{i \to \infty} \mS(t_i)
    = \vol_{g_N}(N)
    = \lim_{t \to \infty} \haus^{n-1}\bigl(\{b_\gamma = t\}\bigr),
\]
where the final equality follows from \cite{Zhu2025}*{Proposition~3.8}. Thus, $\mS(t)$ provides a smooth approximation of the monotone quantity~\eqref{eq:MonotoneArea} for large $t$.

The construction of $\mS(t)$ suggests that we can consider $L^2$-norm on the level sets of $u$ with respect to the weighted area $|\nabla u|\, dA_g$. We then define
\[
    \mA(t) \defeq \int_{\{u=t\}} |\nabla u|^3 \, dA_g.
\]
In Section~\ref{subsec:MonotoneA}, we will show that $\mA(t)$ is monotone non-increasing. Moreover, $\mA(t)$ satisfies a property analogous to that of~\eqref{eq:MonotoneArea}: if $\mA(t)$ converges sufficiently fast, then the desired uniqueness of asymptotic limits follows. We outline the main steps of this property, which implies our main theorem.

Let $\Phi_t$ denote the pointed Gromov--Hausdorff distance between the region $\{t \le u \le t+2\}$ and the tube $[0,2]\times N_t$ in its ``nearest cylinder'' $\R \times N_t$. In Lemma~\ref{criterion-of-uniqueness}, we show that uniqueness follows from the summability condition
\[
    \sum_j \Phi_{t+j} < \infty,
\]
for some $t$ to be determined in the proof. By the Cauchy--Schwarz inequality, this in turn is implied by
\begin{equation}\label{eq:Phi-Bound}
    \sum_j \Phi_{t+j} ^2\, j^{2\sigma} < \infty,
\text{ for some }\sigma > 1/2.
\end{equation}

To prove \eqref{eq:Phi-Bound}, we first bound $\Phi_{t+j} $ by the $L^2$-norm of $\Hess u$ on the region $\{t+j-1 \le u \le t+j+3\}$; see Section~\ref{subsec:GHbound}. This step relies on the smoothness of the cross section and the Ricci-flatness of $(M,g)$. On the other hand, a direct computation shows that the $L^2$-norm appearing above is exactly
\[
    \mA'(t+j+3) - \mA'(t+j-1);
\]
see Section~\ref{subsec:MonotoneA} for the detailed calculation. Consequently, \eqref{eq:Phi-Bound} follows from
\begin{equation}\label{eq:A'-bound}
    \sum_j \bigl(\mA'(t+j+3) - \mA'(t+j-1)\bigr)\, j^{2\sigma} < \infty.
\end{equation}

Using a calculus trick as in \cite{ColdingMinicozziUniqueness}*{Lemma~2.73}, the estimate~\eqref{eq:A'-bound} follows from a decay bound on $-\mA'$ of the form
\begin{equation}\label{eq:Decay-of-A'}
    -\mA'(t) \le C\, t^{-\beta}
\end{equation}
for some $\beta > 0$ and all sufficiently large $t \gg 1$. Finally, this decay estimate is a consequence of the \L{}ojasiewicz--Simon inequality
\begin{equation}\label{eq:LSinequ-for-A'}
    \bigl(-\mA'(t)\bigr)^{2-\alpha}
    \le C\bigl(\mA'(t+3) - \mA'(t-1)\bigr),
\end{equation}
for some $\alpha \in (0,1)$. 

To prove the \L{}ojasiewicz--Simon inequality for $-\mA'$, we approximate $\mA'$ by the weighted Einstein--Hilbert functional
\begin{align}\label{eq:definition-mE}
   \mE(g,w) \defeq \int_N R_g\, w\, dV_g ,
\end{align}
which depends on a $C^{2,\beta}$ Riemannian metric $g$ on the smooth cross section $N$ and a positive $C^{2,\beta}$ weight function $w$ with respect to the Riemannian volume measure. Here $R_g$ denotes the scalar curvature of $g$. The pair $(g,w)$ is required to satisfy a constant weighted volume constraint, namely, the variation of $\mE$ is taken in 
\begin{equation}\label{eq:G1}
\mc G_1
= \left\{ (g,w)\in\mc G \;\middle|\;
\int_N w\, dA_g = \vol_{g_N}(N)\right\}.
\end{equation}

For $i$ sufficiently large and $t$ close to $t_i$, the level set $\{u=t\}$ is diffeomorphic to the smooth cross section $N$, and the induced Riemannian metric $g_t$ on $\{u=t\}$ is close to $g_N$. We will see that $(g_N,1)\in \mc G_1$ and $(g_t,|\nabla u|)\in \mc G_1$ by the constancy of $\mS(t)$. The construction of the functional $\mE$ is inspired by the $\mR$-functional in \cite{ColdingMinicozziUniqueness}*{Section~3}; see Section~\ref{subsec:functionalE} for a detailed discussion of this choice.

The functional $\mE \colon \mc G_1 \to \R$ satisfies the following properties:
\begin{enumerate}[label=(R.\arabic*)]
    \item\label{R1}
    $\mE(g_N,1) = \lim_{t\to\infty}\mA'(t) = 0$.
    
    \item\label{R2}
    The pair $(g_N,1)$ is a critical point of $\mE$ when restricted to $\mc G_1$.
    
    \item\label{R3}
    The functional $\mE$ satisfies a \L{}ojasiewicz--Simon inequality: there exists $\alpha \in (0,1)$ such that
    \[
        |\mE(g,w)|^{2-\alpha}
        \le |\nabla_1 \mE(g,w)|^2,
    \]
    for all $(g,w)$ sufficiently close to $(g_N,1)$, where $\nabla_1 \mE$ denotes the gradient of $\mE$ restricted to $\mc G_1$.
    
    \item\label{R4}
    \[
        |\nabla_1 \mE(g_t,|\nabla u|)|^2
        \le C \int_{\{t-1 \le u \le t+3\}} |\Hess u|^2 \, dV_g.
    \]
    
    \item\label{R5}
    \[
        |\mA'(t) - \mE(g_t,|\nabla u|)|
        \le C\, h_{[t-1,t+3]}.
    \]
\end{enumerate}

Roughly speaking, \ref{R1} and \ref{R2} state that $\mE$ approximates $\mA'$ at infinity. Properties~\ref{R4} and~\ref{R5} further show that $\mE$ and $\mA'$ are quantitatively equivalent when $(g_t,|\nabla u|)$ is sufficiently close to $(g_N,1)$, and may be viewed as quantitative refinements of~\ref{R1} and~\ref{R2}. Finally, the \L{}ojasiewicz--Simon inequality in~\ref{R3} provides the key mechanism for deriving the decay estimate.

We organize Part~1 as follows. In the next section, Section~\ref{sec:proving-uniqueness}, we provide the detailed proofs of Theorem~\ref{thm:uniqueness} and its effective version, Theorem~\ref{thm:effective-uniqueness}, outlined above, assuming the estimate~\eqref{eq:Decay-of-A'}. The construction of the monotone quantity $\mA$ and the approximate functional $\mE$ is presented in Section~\ref{sec:A-and-E}. The verification of properties~\ref{R1}-\ref{R5} is carried out in Section~\ref{sec:verificationR1-R5}, largely following the methods of \cite{ColdingMinicozziUniqueness}. Finally, the precise statements of \eqref{eq:Decay-of-A'} and \eqref{eq:LSinequ-for-A'}, together with their proofs based on the properties of $\mE$, are given in Section~\ref{sec:Decay-of-A'}.

\subsection{Sketch of proof: existence of linear growth harmonic functions}
We outline the proof of the existence of a linear growth harmonic function under the assumptions of
Theorem~\ref{thm:Existence-of-linear-growth-harmonic-function}.
In the same spirit as \cite{Ding2004existencetheorem}, the main idea is to transplant harmonic functions
from the limit cylinder $\overline N=\R\times N$ back to $M$. Without loss of generality we assume $(M,g)$ only has one end. If $(M,g)$ has two ends then it must split as a metric cylinder, then the problem is trivial. We assume some knowledge of the calculus on $\RCD$ spaces which is included in \cites{Ambrosio-Honda2018,DPG17,Gigli_splitting}, since we do not need the $\RCD$ theory in the other part.

First, as shown in Lemma~\ref{criterion-of-uniqueness}, the uniqueness of the asymptotic limit implies
a Cauchy-type criterion for translation sequences.
In particular, for any $\varepsilon>0$ and any $L>0$, there exists
$R_0=R_0(\varepsilon)$, independent of $L$, such that for all $R\ge R_0$,
\begin{equation}\label{eq:uniform-cylinder}
\dist_{\mr{GH}}\bigl((\mT_{[R,R+L]},\gamma_R),([0,L]\times N,(0,x))\bigr)
<\varepsilon .
\end{equation}
Here we use the notation
\[
\mT_{a,b}:=\{x\in M \mid a\le b_{\gamma}(x)\le b\},
\qquad
\gamma_t:=\gamma(t).
\]
This yields uniform pointed Gromov--Hausdorff control of arbitrarily long tubes by the fixed
metric cylinder $\R\times N$.

On the limit cylinder $\R\times N$, let $r$ denote the $\R$-coordinate, which is harmonic.
Fix $L>0$.
By Theorem~\ref{harmonic-replacement}, for any diverging sequence $R_i\to\infty$,
there exist harmonic functions $u_{i,L}$ defined on $\mT_{[R_i,R_i+L]}$ such that,
after composing with the Gromov--Hausdorff approximation map in \eqref{eq:uniform-cylinder},
the functions $u_{i,L}$ converge strongly in $H^{1,2}$ to $r$ on $(0,L)\times N$.
The proof relies on the positivity of the first Dirichlet eigenvalue on $[0,L]\times N$
and the equality $H^{1,2}_0=\widehat H^{1,2}_0$, both verified in
Theorem~\ref{harmonic-replacement}.

To rule out degeneration to constants and to obtain uniform growth control,
we invoke the three circles inequality, Theorem~\ref{three-circles-theorem}.
By a contradiction argument (see also \cite{YanZhuuniqueness}*{Theorem~5.5}) together with the uniform approximation
\eqref{eq:uniform-cylinder}, this inequality is stable under small
Gromov--Hausdorff perturbations.
Consequently, for sufficiently large $i$, harmonic functions $\{u_{i,L}\}$ on tubes $\mT_{[R_i,R_i+L]}$
satisfy the same three circles inequality, with constants independent of $i$ and $L$.

Using the functions $u_{i,L}$ constructed above and the uniform three circles inequality,
we obtain uniform local $H^{1,2}$ and $C^{0,\alpha}$ bounds on compact subsets of the end
$\{b_\gamma\ge R_0\}$.
By a diagonal argument and elliptic regularity, a subsequence converges locally smoothly
to a harmonic function
\[
u:\{b_\gamma\ge R_0\}\to\R .
\]

By construction, on increasingly large tubes the functions $u_{i,L}$
are close to the coordinate function $r$ on $\R\times N$.
Moreover, by \cite{Zhu2025}, the Busemann function $b_\gamma$ converges to $r$
under pointed Gromov--Hausdorff convergence.
Passing to the limit therefore yields
\[
|u-b_\gamma|\longrightarrow 0
\quad\text{as } b_\gamma\to\infty,
\]
showing that $u$ has linear growth and is asymptotic to $b_\gamma$.

\subsection*{Acknowledgements}
The authors thank Jian Wang, Guofang Wei and Ruobing Zhang for their interest in this work, Yifan Chen and Junsheng Zhang for the reference \cite{HeinInstanton}. Z.Y. is supported by an AMS–Simons Travel Grant. X.Z. is supported by an AMS–Simons Travel Grant.

\part{Uniqueness of asymptotic limits}

\section{Proving uniqueness}\label{sec:proving-uniqueness}
In this section, we prove Theorem~\ref{thm:uniqueness}, modulo several analytic details that will be verified in the subsequent sections.

Fix $(M^n,g)$ to be an open noncollapsed $n$-manifold with nonnegative Ricci curvature and linear volume growth. Let $\gamma \colon [0,\infty) \to M$ be a ray in $M$, and let $b_\gamma$ denote the associated Busemann function. We assume that there exists a harmonic function
\[
u \colon \{b_\gamma \ge 0\} \to \R
\]
which is asymptotic to $b_\gamma$ in the $C^0$ sense: for any $\varepsilon > 0$, there exists $R_0 \ge 0$ such that
\begin{align}\label{eq:linear-harmonic-function}
    |u(x) - b_\gamma(x)| < \varepsilon,
    \quad \forall\, x \in \{b_\gamma \ge R_0\}.
\end{align}

Although not necessary, it is convenient to assume that
\[
M \setminus \{b_\gamma \ge 0\} \neq \emptyset.
\]
Otherwise, $u$ is globally defined and harmonic. By a result of Sormani \cite{SormaniHarmonic}, the linear growth harmonic function $u$ will induce a splitting, in which case $M$ itself is a metric cylinder.

\subsection{Criterion for the uniqueness}\label{subsec:criterion}
In this section, we give a Cauchy criterion for the uniqueness of asymptotic limits, Lemma \ref{criterion-of-uniqueness}. First we need to take a preferred Gromov--Hausdorff approximation via our harmonic function $u$. It will help us compute the pointed Gromov--Hausdorff distance. Given $\eps>0$, combining the closeness to $b_{\gamma}$ in \eqref{eq:linear-harmonic-function}, and the fact that $u$ is harmonic, standard arguments (c.f. \cite{CheegerRicBook}) show that when $i$ is large enough, $u-t_i$ is an $\eps$-almost splitting function on $\{ t_i \le u \le t_i + 2\}$, that is   
\begin{enumerate}
    \item $\sup_{\{ t_i \le u \le t_i + 2\}}|\nabla u|\le 1+ C(n)\eps$ for some constant $C(n)>0$;
    \item $\dashint_{\{ t_i \le u \le t_i + 2\}}||\nabla u|^2-1|\le \eps$;
    \item  $\dashint_{\{ t_i \le u \le t_i + 2\}}|\Hess u|^2\le \eps$.
\end{enumerate}
When stating the almost splitting properties above we used level sets of $u$ for convenience, it is easy to see that $\{ t_i \le u \le t_i + 2\}$ is comparable to a ball centered at $\gamma(t_i)$, when $M$ is noncollaped and has linear volume growth, because in this case the diameter of $\{ t_i \le u \le t_i + 2\}$ is uniformly bounded. Compare \cite{ColdingMinicozzilargeScale}*{(2.6)-(2.11)}.

\begin{remark}\label{rmk:GHAvia-u}
    Since $u-t_i$ is an almost splitting function, its level sets can be used to construct a Gromov--Hausdorff approximation between $\{t_i\le u\le t_i+2\}$ and $[0,2]\times N$ and it takes $\gamma$ to (a small neighborhood of) $[0,2]\times \{x\}$ for some $x\in N$, see \cite{Zhu2025}*{Proposition 2.11}, and it takes corresponding level set $\{u=t\}$ to (a small neighborhood of) $\{t-t_i\}\times N$, see \cite{SormaniSublinear}*{Note 30}.
\end{remark}

Following the criterion for the uniqueness of asymptotic cones in \cite{ColdingMinicozziUniqueness}*{Lemma 2.56}, we give a similar criterion for the uniqueness of asymptotic limits, see also another criterion using a Busemann function by Sormani \cite{SormaniSublinear}*{Remark 43}.

From now on, we denote
\[
    T_{[a,b]} \defeq \{\, a \le u \le b \,\}
\]
the closed tube bounded by the level sets $\{u=a\}$ and $\{u=b\}$. We consider the tube $T_{[t,t+2]}$ together with a marked point
$x_t \in \gamma \cap \{u=t\}$, a choice of an intersection point of the ray
$\gamma$ with the level set $\{u=t\}$.

Let $\Phi_t$ be the infimum of the pointed Gromov--Hausdorff distance between $T_{[t,t+2]}$ and the corresponding tube in some metric cylinder. 
\[
\Phi_t\defeq \inf\{\dist_{\mathrm{GH}}((T_{[t,t+2]},x_t),([0,2]\times N,(0,x)) )\;|\;\oN=\R\times N \text{ a metric cylinder}\}
\]
Thus, given $\eps>0$, if $\Phi_t<\epsilon$, then there is a cylinder $\oN_t=N_t\times \bR$ satisfying
\begin{equation*}
    \dist_{\mathrm{GH}}((T_{[t,t+2]}\subset M,x_t), ([0,2]\times N_t\subset \oN_t,(0,x)) )<\eps.
\end{equation*}
We have 
\begin{lemma}\label{criterion-of-uniqueness}
      If for some $t$ $\sum_{j=0}^{\infty}\Phi_{t+j}<\infty$, then $M$ has a unique asymptotic limit.
\end{lemma}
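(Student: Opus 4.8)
The plan is to show that the summability of $\sum_j \Phi_{t+j}$ forces the tubes $T_{[s,s+2]}$ to be Gromov--Hausdorff Cauchy in $s$, and that the limiting cylinder is independent of the translation sequence. First I would fix $t$ with $\sum_{j\ge 0}\Phi_{t+j}<\infty$. For each integer $j\ge 0$, pick a near-optimal metric cylinder $\oN_{t+j}=\R\times N_{t+j}$ with $\dist_{\mr{GH}}\bigl((T_{[t+j,t+j+2]},x_{t+j}),([0,2]\times N_{t+j},(0,x))\bigr)<2\Phi_{t+j}$ (or $<\Phi_{t+j}+2^{-j}$ if some $\Phi_{t+j}$ vanishes). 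The key geometric input is that consecutive tubes overlap: $T_{[t+j,t+j+2]}$ and $T_{[t+j+1,t+j+3]}$ share the sub-tube $T_{[t+j+1,t+j+2]}$, which inside $[0,2]\times N_{t+j}$ corresponds (by Remark~\ref{rmk:GHAvia-u}, using that $u-(t+j)$ is an almost splitting function whose level sets map to slices $\{s\}\times N$) to $[1,2]\times N_{t+j}$, and inside $[0,2]\times N_{t+j+1}$ to $[0,1]\times N_{t+j+1}$. Hence there is a Gromov--Hausdorff approximation between $[0,1]\times N_{t+j}$ and $[0,1]\times N_{t+j+1}$ of quality $\lesssim \Phi_{t+j}+\Phi_{t+j+1}$, which (again because the $\R$-factor is an honest splitting direction identified with $u$) descends to a Gromov--Hausdorff approximation between the cross-sections $N_{t+j}$ and $N_{t+j+1}$ of comparable quality, so $\dist_{\mr{GH}}(N_{t+j},N_{t+j+1})\le C(\Phi_{t+j}+\Phi_{t+j+1})$.

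Summing the triangle inequality over $j$ and using $\sum_j\Phi_{t+j}<\infty$ shows that $\{N_{t+j}\}_j$ is a Cauchy sequence in the Gromov--Hausdorff metric on compact metric spaces; since that space is complete, $N_{t+j}\to N_\infty$ for some compact metric space $N_\infty$, and therefore $T_{[t+j,t+j+2]}\to [0,2]\times N_\infty$ in the pointed Gromov--Hausdorff sense. Next I would upgrade this from integer shifts to all real $s\to\infty$: for $s\in[t+j,t+j+1]$ the tube $T_{[s,s+2]}$ lies between $T_{[t+j,t+j+3]}$ and is covered by two consecutive unit tubes, so by the same overlap argument $\dist_{\mr{GH}}\bigl(T_{[s,s+2]},[0,2]\times N_\infty\bigr)\le C\sum_{k\ge j}\Phi_{t+k}\to 0$. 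Finally, any translation sequence $p_i\to\infty$ can, after the reduction to the ray $\gamma$ recorded in the discussion before the lemma (via \cite{Zhu2025}*{Theorem~1.6}), be compared to marked points $x_{s_i}$ on $\gamma$ with $s_i\to\infty$, and $(M,g,p_i)$ is then at pointed Gromov--Hausdorff distance $\to 0$ from $(T_{[s_i,s_i+2]},x_{s_i})$ rescaled appropriately / exhausted by such tubes; since the latter converges to $\R\times N_\infty$ independently of the sequence, the asymptotic limit is unique.

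The main obstacle I anticipate is the bookkeeping in the overlap step: one must be careful that the Gromov--Hausdorff approximations built from the almost-splitting function $u$ genuinely identify the shared sub-tube $T_{[t+j+1,t+j+2]}$ with the ``top half'' of one model cylinder and the ``bottom half'' of the next, and that composing these two approximations yields a bona fide approximation between the two cross-sections with the stated error — this is exactly where Remark~\ref{rmk:GHAvia-u} and \cite{SormaniSublinear}*{Note~30} (level sets of $u$ go to slices) and \cite{Zhu2025}*{Proposition~2.11} (the ray goes to a fiber line) are needed, and where one implicitly uses that on these tubes $|\nabla u|$ is close to $1$ so that the $u$-distance and the intrinsic distance along the $\R$-factor agree up to the errors already being tracked. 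Everything else is a convergent-series estimate plus the completeness of the Gromov--Hausdorff space and the reduction to rays, both of which are available from the cited results.
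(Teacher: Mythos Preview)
Your proposal is correct and follows the same overall strategy as the paper: use summability of $\Phi_{t+j}$ together with the preferred GH approximation of Remark~\ref{rmk:GHAvia-u} to establish a Cauchy condition, then invoke completeness of the Gromov--Hausdorff space. The paper takes a slightly more direct route, however. Rather than comparing the cross-sections $N_{t+j}$ and $N_{t+j+1}$ of two \emph{overlapping} $2$-tubes, it splits a \emph{single} tube $T_{[j,j+2]}$ into its two unit halves $T_{[j,j+1]}$ and $T_{[j+1,j+2]}$, observes that both are $2\Phi_j$-close to $[0,1]\times N_j$ for the \emph{same} $N_j$ (after restricting the preferred approximation and translating in the $\R$-factor), and concludes directly that $\dist_{\mr{GH}}(T_{[j,j+1]},T_{[j+1,j+2]})\le 4\Phi_j$. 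This bypasses your ``descent to cross-sections'' step entirely --- the unit tubes themselves form the Cauchy sequence, so there is no need to argue that closeness of $[0,1]\times N_{t+j}$ to $[0,1]\times N_{t+j+1}$ implies closeness of $N_{t+j}$ to $N_{t+j+1}$. Your explicit identification of the limit $N_\infty$, the upgrade from integer to real parameters, and the final reduction to rays are all correct, but the paper's proof simply stops at the Cauchy bound and leaves these routine conclusions implicit.
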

\begin{proof}
  without loss of generality we take $t=0$ in the proof. It suffices to show that the sequence $\{\dist_{\mathrm{GH}}(T_{[j,j+1]}, T_{[j+1,j+2]})\}_{j=1}^\infty$ is a Cauchy sequence. We estimate $\dist_{\mathrm{GH}}(T_{[j,j+1]}, T_{[j+1,j+2]})$ by $\Phi_j$. First note that there is a cylinder $\oN_j=\R\times N_j$ such that 
  \[
    \dist_{\mathrm{GH}}(T_{[j,j+2]}\subseteq M,[0,2]\times N_j\subseteq \oN_j)\le 2\Phi_j.
  \]

Then the two sub-tubes $T_{[j,j+1]}$ and $T_{[j+1,j+2]}$ also have $\oN_j$ as a candidate of the closest metric cylinder. Here, we use the preferred GH approximation defined via $u$, as noticed in Remark \ref{rmk:GHAvia-u}. With this choice the GH approximation can be restricted to the sub-tubes, composing with translation in $\oN_t$ if necessary, it follows 
\begin{equation*}
    \dist_{\mathrm{GH}}(T_{[j,j+1]}, [0,1]\times N_j )\le 2\Phi_j, \quad \dist_{\mr{GH}}(T_{[j+1,j+2]}, [0, 1]\times N_j )\le 2\Phi_j.
\end{equation*}
Altogether, we have $\dist_{\mr{GH}}(T_{[j,j+1]}, T_{[j+1,j+2]})\leq 4\Phi_j$. This completes the proof. 
\end{proof}

\subsection{$C^1$ bounds on $\Hess u$ and distances to tubes}
We now show the following $C^1$ bound on the Hessian of the harmonic replacement $u$ on the end. 

\begin{theorem}\label{interior-estimate}
Let $\{t_i\}$ be the sequence in \eqref{eq:convergent-subsequence-GH-sense}.
There exists $\tilde{\delta}>0$ such that for any
$\delta\in(0,\tilde{\delta})$, $L>0$, $i\in \N^+$, if 
\begin{equation}\label{eq:closeness-to-tube}
   \forall t\in [t_i,t_i+L],   \dist_{\mr{GH}}\!\left(
        (T_{[t,t+2]},x_{t}),
        ([0,2]\times N,(0,x))
    \right)
    < \delta.
\end{equation}
then there exists constant $C(g_N)$ depends only on the geometry of the limit cylinder
$\R\times N$, in particular, on the dimension and curvature bounds of $N$ such that for any $t\in [t_i,t_i+L]$,
\begin{equation}
    \|\Hess u\|_{C^1(\{u=t\})}^2
    \le
    C(g_N)\int_{T_{[t-1,\,t+3]}} |\Hess u|^2\, dV_g 
\end{equation}
\end{theorem}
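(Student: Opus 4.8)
The plan is to bootstrap a pointwise/higher-order Hessian bound on a level set $\{u=t\}$ from the integral $L^2$-bound $\int_{T_{[t-1,t+3]}}|\Hess u|^2$ via interior elliptic estimates, using that the region is uniformly close (in the sense of~\eqref{eq:closeness-to-tube}) to a fixed product tube $[0,2]\times N$ with $N$ a fixed smooth closed Ricci-flat manifold. First I would record the Bochner formula for the harmonic function $u$: since $\Delta u=0$ and $\Ric_g=0$, one has $\Delta\tfrac12|\nabla u|^2=|\Hess u|^2$, so $|\nabla u|^2$ is subharmonic, and more importantly the components of $\Hess u$ satisfy a second-order elliptic system with coefficients controlled by the metric $g$ and lower-order terms involving $\Rm$ and $\nabla u$. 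The key quantitative input is that, by~\eqref{eq:closeness-to-tube} together with Cheeger--Colding/Anderson-type $\epsilon$-regularity (the cross section is \emph{smooth} and Ricci-flat, so the GH-closeness upgrades to $C^{k,\alpha}$-closeness of the metric on a slightly smaller tube to the product $dt^2+g_N$, with estimates depending only on $(N,g_N)$), the metric $g$ on $T_{[t-1/2,t+5/2]}$ is uniformly bounded in $C^{k,\alpha}$ in suitable harmonic coordinates adapted to $u$. In particular the injectivity radius, the curvature and all its covariant derivatives are bounded by $C(g_N)$ on this inner tube.

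Next I would set up harmonic (or $u$-adapted) coordinates: use $u$ itself as one coordinate and complete it to a full coordinate system on a definite-size neighborhood of each point of $\{u=t\}$ in which $g$ has uniformly bounded $C^{k,\alpha}$ norm and is uniformly elliptic, again with constants depending only on $g_N$. In these coordinates $u$ solves $\partial_i(\sqrt{g}g^{ij}\partial_j u)=0$, a uniformly elliptic divergence-form equation with $C^{k-1,\alpha}$ coefficients; interior $L^p$/Schauder estimates then give, on balls of definite size inside $T_{[t-1/2,t+5/2]}$,
\[
\|u\|_{C^{k+1,\alpha}(B_{1/2})}\le C(g_N)\bigl(\|u\|_{L^2(B_1)}+1\bigr).
\]
Since $\Hess u = \nabla^2 u$ is a first covariant derivative of $\nabla u$, and $\nabla u$ is controlled by $\|u\|_{C^2}$ modulo lower order, to get the \emph{scale-invariant} bound by the integral of $|\Hess u|^2$ rather than by $\|u\|_{L^2}$ one applies the Caccioppoli/reverse-Poincaré inequality: for the harmonic function $u$, interior estimates for $\nabla u$ and $\Hess u$ can be expressed in terms of $\int |\nabla u|^2$ on a slightly larger ball, and then, because $u$ is asymptotic to the Busemann function (hence $|\nabla u|\to 1$), the ``affine part'' of $u$ carries no Hessian, so after subtracting a linear function adapted to $u$ one controls $\|u - \ell\|_{C^{k+1,\alpha}(B_{1/2})}$ by $\|\nabla u - \text{const}\|_{L^2(B_1)}$, and finally by $\bigl(\int_{T_{[t-1,t+3]}}|\Hess u|^2\,dV_g\bigr)^{1/2}$ using a Poincaré inequality on the tube (valid with constant $C(g_N)$ since the tube is close to the fixed product). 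Covering $\{u=t\}$ by finitely many such balls --- finitely many with bound depending only on $g_N$, since $\Vol(N)$ and $\diam(N)$ are fixed --- and summing yields the claimed $C^1(\{u=t\})$ bound on $\Hess u$.

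The main obstacle, and the step requiring the most care, is the passage from the Gromov--Hausdorff closeness~\eqref{eq:closeness-to-tube} to uniform \emph{smooth} (harmonic-coordinate) control of the metric $g$ on the inner tube with constants depending only on $(N,g_N)$ --- i.e.\ genuine $\epsilon$-regularity on the tube, not just on balls in the interior. Here one must use that $\Ric_g = 0$ (so Anderson's $\epsilon$-regularity/harmonic-radius estimates apply) together with the noncollapsing hypothesis to rule out curvature concentration, and one must handle the boundary level sets $\{u=t-1\}$, $\{u=t+3\}$ of the larger tube carefully so that the interior tube $\{u=t\}$ sits at definite distance from where control might degenerate --- this is exactly why the statement uses the wider tube $T_{[t-1,t+3]}$ on the right and the two-unit-wide tube on the left. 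A secondary technical point is verifying that the Poincaré/Caccioppoli constants on the tube are uniform: this follows because, by~\eqref{eq:closeness-to-tube} and the $\epsilon$-regularity just mentioned, the tube $T_{[t-1,t+3]}$ is bi-Lipschitz to $[-1,3]\times N$ with constant close to $1$, so the classical Poincaré inequality on the fixed compact product transfers with a controlled constant.
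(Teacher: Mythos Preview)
Your identification of the key regularity input---that the GH-closeness in \eqref{eq:closeness-to-tube} upgrades, via Anderson's $\epsilon$-regularity for noncollapsed Ricci-flat metrics, to uniform curvature bounds---is correct and matches the paper. You also note at the outset that the components of $\Hess u$ satisfy a second-order elliptic system; this is exactly the right tool. However, your main argument then pivots away from this to Schauder estimates for the scalar function $u$ combined with subtraction of an affine function $\ell$, and this detour introduces a genuine gap.

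The issue is that in any coordinate system $\Hess_g u_{ij}=\partial_i\partial_j u-\Gamma^k_{ij}\partial_k u$. Your Schauder-plus-Poincar\'e argument does control $\partial_i\partial_j u=\partial_i\partial_j(u-\ell)$ through $\|u-\ell\|_{C^{k+1,\alpha}}\le C\|\Hess u\|_{L^2}$. But the Christoffel term contains the contribution $\Gamma^k_{ij}a_k$, where $a=\nabla\ell$ is the constant coordinate gradient of $\ell$; since $|a|\approx |\nabla u|\approx 1$, this term is of size $|\Gamma|$, which is bounded by $C(g_N)$ from $\epsilon$-regularity but is \emph{not} bounded by $\|\Hess u\|_{L^2}$. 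Equivalently, $\Hess_g\ell_{ij}=-\Gamma^k_{ij}a_k\ne 0$, so subtracting an affine $\ell$ does not kill the Riemannian Hessian. Your route therefore yields only the a priori bound $\|\Hess_g u\|_{C^1}\le C(g_N)$, not the claimed scale-invariant estimate by $\int|\Hess u|^2$.

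The paper instead applies De Giorgi--Nash--Moser iteration directly to $|\Hess u|^2$. Since $u$ is harmonic and $\Ric_g=0$, the commutation formula gives $\Delta\Hess u=-2\Rm\circ\Hess u$, hence $\Delta|\Hess u|^2\ge 2|\nabla\Hess u|^2-C(g_N)|\Hess u|^2$ once Anderson supplies the curvature bound. Moser iteration on this differential inequality yields $\sup_{\{u=t\}}|\Hess u|^2\le C\int_{T_{[t-1,t+3]}}|\Hess u|^2$ with no affine subtraction needed. A Caccioppoli step then bounds $\int|\nabla\Hess u|^2$ by $\int|\Hess u|^2$, and a second Moser iteration on the analogous inequality for $|\nabla\Hess u|^2$ delivers the $C^1$ bound. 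This is precisely the elliptic system for $\Hess u$ you flagged in your first paragraph; using it directly, rather than going through $u$, is what closes the argument.
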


\begin{proof}
    By the direct calculation, we know that
\begin{equation*}
    \Delta |\Hess u|^2=\divsymb \left(2\langle \nabla \Hess u, \Hess u\rangle \right)=2\left(\langle \Delta \Hess u, \Hess u\rangle+|\nabla \Hess u|^2\right).
\end{equation*}
By \cite{ColdingMinicozziUniqueness}*{Lemma 4.3}, we have
\begin{equation*}
    \Delta \Hess u=\Hess (\Delta u)-2\Rm\circ \Hess u=-2\Rm\circ\Hess u,
\end{equation*}
where $\Rm\circ(\Hess u)$ denotes the natural contraction of the curvature tensor with symmetric $2$-tensors. Note that by Anderson \cite{AndersonConvergence}, under Ricci flat and noncollapsed condition the sequence of tubes
$\{T_{[t_i,t_i+L]}\}$ converges to $[0,L]\times N$ in the $C^{\infty}$ sense. Hence, there exists $\tilde\delta>0$ such that $\Rm$ is uniform bounded as long as \eqref{eq:closeness-to-tube} holds, in turn we have 
\begin{equation}\label{eq:LaplacianHessian}
    \Delta |\Hess u|^2\geq 2|\nabla \Hess u|^2-C(g_N) |\Hess u|^2,
\end{equation}
where the constant $C(g_N)$ depends only on the metric $g_N$ and is independent of $i$.
Therefore, we may apply De Giorgi--Nash--Moser iteration based on \eqref{eq:LaplacianHessian} to obtain that
\begin{equation}\label{eq:Moser1}
      \left\|\Hess u\right\|^2_{C^0(\{u=t\})}\leq C( g_N)\int_{T_{[t-1, t+3]}}\left|\Hess u\right|^2 dV_g, \quad \forall t\in T_{[t_i, t_i+L]}.
\end{equation}
In addition, by Cauchy--Schwarz inequality, we have
\begin{equation*}
    \begin{split}
        0=\int \divsymb\left(\eta ^2 \nabla |\Hess u|^2 \right)&\geq \int \eta^2 \left(2|\nabla \Hess u|^2-C(g_N) |\Hess u|^2\right)\\
        &-4\int\eta |\nabla \eta||\Hess u||\nabla \Hess u|\\
        & \geq \int \eta^2 |\nabla \Hess u|^2-C_1 \eta^2 |\Hess u|^2\\
        &-C_2 \int |\nabla \eta|^2|\Hess u|^2.
    \end{split}
\end{equation*}
Here constants $C_1$ are $C_2$ are independent of $i$ as well.

Since we are working on the tube $T_{[t, t+2]}$, we can choose a cut-off function $\eta$ compactly supported in $T_{[t-1, t+3]}$ and satisfies $\eta=1$ on $T_{[t, t+2]}$ and $|\nabla \eta|\leq 4$. This yields
\begin{equation}\label{eq:gradientL^2}
    \int_{T_{[t, t+2]}}|\nabla \Hess u|^2 dV_g\le C\int_{T_{[t-1, t+3]}}\left|\Hess u\right|^2 dV_g. 
\end{equation}
Similar as above, from \eqref{eq:gradientL^2}, the identity
\begin{equation}\label{eq:LaplacianGradientHessian}
\begin{split}
    \Delta |\nabla \Hess u|^2&\geq 2|\Hess \Hess u|^2-C(g_N)|\nabla \Hess u|^2+2\langle \nabla \Hess u, \nabla \Delta \Hess u\rangle\\
    &=2|\Hess \Hess u|^2-C(g_N)|\nabla \Hess u|^2
    \end{split}
\end{equation}
and De Giorgi-Nash-Moser iteration, we obtain that for all $t\in T_{[t_i, t_i+L]}$
\begin{equation}\label{eq:Moser2}
      \left\|\nabla \Hess u\right\|^2_{C^0(\{u=t\})}\le C( g_N)\int_{T_{[t-1, t+3]}}\left|\Hess u\right|^2 dV_g.
\end{equation}
 Finally, combining \eqref{eq:Moser1} and \eqref{eq:Moser2}, we obtain the $C^1$ estimates as desired. 
\end{proof}

\subsection{Controlling GH distance to tubes via Hessian estimates}\label{subsec:GHbound}
For convenience, we set 
\begin{equation*}
    h_{[a,b]}=\int_{\{a\leq u\leq b\}}\left|\Hess u\right|^2 dV_g
\end{equation*}
and assume the error $h_{[a,b]}$ is very small. In fact, $h_{[a,b]}$ can be arbitrarily small by choosing $a,b$ large enough. In the same spirit as \cite{ColdingMinicozziUniqueness}, for the convergent sequence $(M^n,g, \gamma(t_i))$ in \eqref{eq:convergent-subsequence-GH-sense}, we are going to bound distances to tubes in their nearest cylinder:

\begin{proposition}\label{prop:Bound-Phi-by-Hess}
Under the assumptions of Theorem~\ref{interior-estimate}, there exists constant $C=C(g_N,L)>0$ 
$\R \times N_i$ 
such that for any $t\in [t_i,t_i+L]$, we have
\[
    \Phi_t
    \le
    C\left(
        \int_{\{t-1 \le u \le t+3\}}
        |\Hess u|^2 \, dV_g
    \right)^{\frac12}.
\]
\end{proposition}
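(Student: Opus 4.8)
The plan is to build an explicit diffeomorphism of a chunk of a model metric cylinder onto $T_{[t,t+2]}$ via the (reparametrized) gradient flow of $u$, and to show that the pulled-back metric differs from a genuine product metric by an amount controlled pointwise by $|\Hess u|$, which Theorem~\ref{interior-estimate} then converts into the asserted bound. First I would record the regularity available on $T_{[t,t+2]}$. By the hypothesis~\eqref{eq:closeness-to-tube} and Anderson's convergence theorem — used exactly as in the proof of Theorem~\ref{interior-estimate} — the tubes $T_{[t_i,t_i+L]}$ converge in $C^\infty$ to $[0,L]\times N$, and $u$, being harmonic and asymptotic to $b_\gamma$, converges in $C^\infty_{\mathrm{loc}}$ to the axial coordinate on the limit cylinder. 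Hence on a slightly enlarged tube $|\nabla u|$ is uniformly close to $1$, each level set $\{u=t\}$ with $t\in[t_i,t_i+L]$ is a smooth closed hypersurface diffeomorphic to $N$, and $X\defeq\nabla u/|\nabla u|^2$ is a smooth, uniformly bounded vector field. Then I would set up the flow map: letting $\phi_s$ be the flow of $X$, the identity $\langle\nabla u,X\rangle\equiv1$ gives $u\circ\phi_s=u+s$, so $\phi_s$ carries $\{u=t\}$ onto $\{u=t+s\}$ and $\Psi(p,s)\defeq\phi_s(p)$ is a diffeomorphism $\{u=t\}\times[0,2]\to T_{[t,t+2]}$ taking $(x_t,0)$ to $x_t$. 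Because $d\Psi(\partial_s)=X$ is parallel to $\nabla u$ while $d\phi_s$ preserves $\ker du$ (as $d(u\circ\phi_s)=du$), the pullback splits without cross terms: $\Psi^*g=k_s+w_s\,ds^2$, where $k_s$ is a curve of metrics on $\{u=t\}$ with $k_0=g_t$ the induced metric and $w_s=|\nabla u|^{-2}\circ\phi_s$.

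The core estimate is differential: $\partial_s\Psi^*g=\Psi^*(\mathcal{L}_Xg)$, and a direct computation expresses $\mathcal{L}_Xg$ through $\Hess u$, $\nabla u$, and $\nabla|\nabla u|^2=2\,\Hess u(\nabla u,\cdot)$, so the two-sided bound on $|\nabla u|$ yields $|\mathcal{L}_Xg|\le C(g_N)|\Hess u|$. Integrating over $s\in[0,2]$ bounds $|k_s-g_t|$ and the $s$-oscillation of $w_s$ by $C(g_N)\sup_{T_{[t,t+2]}}|\Hess u|$; the tangential oscillations of $g_t$ and of $|\nabla u|^{-2}$ along $\{u=t\}$ are likewise $\le C(g_N)|\Hess u|$. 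Replacing $k_s$ by $g_t$ and $w_s$ by a constant $\bar w$ up to a further error of this size, then rescaling the interval by $\sqrt{\bar w}$, turns the model into an honest product $[0,2\sqrt{\bar w}]\times N_t$, where $N_t\defeq(\{u=t\},g_t)$ is compact and $\R\times N_t$ is an admissible competitor for $\Phi_t$. Since the tube diameters are uniformly bounded (noncollapsedness and linear volume growth), a metric that is $C^0$-close to a product up to $\varepsilon$ has Gromov--Hausdorff distance $\lesssim\varepsilon$ from it, so $\Phi_t\le C(g_N,L)\bigl(\sup_{T_{[t,t+2]}}|\Hess u|+|2\sqrt{\bar w}-2|\bigr)$. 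Applying Theorem~\ref{interior-estimate} on each slice in $[t,t+2]$ gives $\sup_{T_{[t,t+2]}}|\Hess u|^2\le C(g_N)\,h_{[t-1,t+3]}$, so the first term is already $\le C(g_N)\,h_{[t-1,t+3]}^{1/2}$.

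The hard part, I expect, is the second term: the mismatch between the length of the tube and the interval $[0,2]$ fixed in the definition of $\Phi_t$, equivalently the closeness of $|\nabla u|$ to $1$ on $\{u=t\}$. Its $s$-variation and tangential oscillation are controlled by $|\Hess u|$ as above, but a priori one only knows that the mean value of $|\nabla u|^{-2}$ over $\{u=t\}$ is close to $1$, not that $|2\sqrt{\bar w}-2|\le C\,h_{[t-1,t+3]}^{1/2}$. Resolving this is where the Ricci-flat hypothesis enters beyond Anderson's theorem: one combines the Bochner identity $\Delta|\nabla u|^2=2|\Hess u|^2$ (harmonicity together with $\Ric_g=0$), which controls $|\nabla u|^2$ on a tube in terms of $h$ and its boundary values, with the exact constancy $\int_{\{u=t\}}|\nabla u|\,dA_g=\vol_{g_N}(N)$ coming from $\mS'\equiv0$, to pin $|\nabla u|$ to $1$ on the level sets at the rate dictated by $h$. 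Making this last comparison quantitative rather than merely asymptotic is the step demanding the most care.
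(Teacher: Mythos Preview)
Your strategy is exactly the paper's: identify $T_{[t,t+2]}$ with a product via the gradient flow of $u$, and bound the $C^0$ deviation of the pulled-back metric from a genuine product using the pointwise Hessian estimate of Theorem~\ref{interior-estimate}. The paper flows by the unit-speed field $\nabla u/|\nabla u|$ rather than your $X=\nabla u/|\nabla u|^2$; this is cosmetic, and your normalization is arguably cleaner since it carries $\{u=t\}$ exactly onto $\{u=t+s\}$ in flow-time $s$ and manifestly kills the cross terms in $\Psi^*g$.

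On the point you isolate as ``the hard part'' --- pinning $|\nabla u|$ to $1$ at rate $\sqrt{h}$ so that the axial length of the tube matches the fixed model interval $[0,2]$ --- the paper's written argument simply does not engage. It extends a tangential orthonormal frame $\{e_k\}_{k=1}^{n-1}$ along the flow, integrates the ODE $(g(e_k,e_j))'=2\Hess u(e_k,e_j)/|\nabla u|$ to obtain $|g(e_k,e_j)-\delta_{kj}|\le CL\sqrt{h_{[t-1,t+3]}}$ for $k,j\le n-1$, notes that the flow direction contributes $dr^2$ exactly, and then passes directly to the GH bound with the range of $r$ tacitly treated as $[0,2]$. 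Neither your Bochner/$\mS'\equiv 0$ idea nor any other mechanism for controlling $\big||\nabla u|-1\big|$ by $\sqrt{h}$ is invoked. So you are being more scrupulous than the paper on this point; whether the axial-length discrepancy is genuinely $O(\sqrt{h})$ or merely $o(1)$ under the standing $\delta$-closeness hypothesis~\eqref{eq:closeness-to-tube} is a legitimate question about the paper's argument as written. (Later, in the verification of~\ref{R5}, the paper does assert $\big||\nabla u|-1\big|\le C\,h_{[t-1,t+3]}$ without proof, which suggests the authors regard this as routine; your instinct that it needs justification is sound.)
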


\begin{proof}
We note that the flow generated by $\frac{\nabla u}{|\nabla u|}$ gives a diffeomprhism between the tube $T_{[t, t+2]}$ and the product space $[0,2]\times u^{-1}(t) $. Let $g_{t}$ denote the induced metric on the level set $u^{-1}(t)$. We now show that the metric on $T_{[t, t+2]}$ is $C^0$ close to the product metric
\begin{equation*}
    dr^2+g_{t}
\end{equation*}
on $[0,L]\times u^{-1}(t) $ which implies Gromov--Hausdorff closeness to the cylinder $(\R\times u^{-1}(t), du^2+g_{t} )$, in turn implies the upper bound on $\Phi_{t}$. We denote this cylinder by $\R\times N_i$. 

Notice that Theorem \ref{interior-estimate} gives pointwise estimates on Hessian: for all $t\in T_{[t_i, t_i+L]}$,  
\begin{equation}\label{eq: estimates-on-Hess-and-gradient}
    \left|\Hess u\right|(x)+|\nabla |\nabla u||(x)\leq C\sqrt{h_{T_{[t-1, t+3]}}},\quad u(x)=t,
\end{equation}
where we have used $\nabla |\nabla u|^2=2\Hess u (\nabla u, \cdot)$ and a uniform upper bound on the gradient $|\nabla u|$. In the sequel, the uniform constant $C$ may be different from line to line, but they are independent of $i$ in the same sense as in Theorem \ref{interior-estimate}.

Let $p$ be an arbitrary point in $u^{-1}(t)$ and $\{e_k\}$ an orthonormal frame for $g_{t_i}$ at $p$. We can extend the frame $\{e_k\}_{k=1}^{n-1}$ along the flow line and preserve the bracket:
\begin{equation*}
    \left[e_k,\frac{\nabla u}{|\nabla u|} \right]=0.
\end{equation*}

Moreover, even though the extended vector fields are no longer orthonormal, but they are tangent to the level set of $u$ and satisfy
\begin{equation}\label{eq:change-of-metric-under-gradient-flow}
    \left(g(e_k, e_j)\right)'=\mc L_{\frac {\nabla u}{|\nabla u|}} (g(e_i,e_j))=\left(\mc L_{\frac {\nabla u}{|\nabla u|}} g\right)(e_i,e_j) =\frac{2\Hess u(e_k, e_j)}{|\nabla u|}.
\end{equation}
Here the differentiation $'$ is along the aforementioned flow. 
Immediately, by \eqref{eq:change-of-metric-under-gradient-flow}, we know that for $k=1, \cdots, n-1$, 
\begin{equation*}
    -\frac{2|\Hess u|}{|\nabla u|} g(e_k,e_k)\leq \left(g(e_k, e_k)\right)'\leq \frac{2|\Hess u|}{|\nabla u|} g(e_k,e_k).
\end{equation*}
Combining this with \eqref{eq: estimates-on-Hess-and-gradient} yields that for all $t\in T_{[t_i, t_i+L]}$
\begin{equation}\label{eq:ek}
\begin{split}
    |g(e_k, e_k)-1|&\leq \max \left\{\exp^{CL\sqrt{h_{T_{[t-1, t+3]}}}}-1, 1-\exp^{-CL\sqrt{h_{T_{[t-1, t+3]}}}}\right\}\\
    &\leq CL\sqrt{h_{T_{[t-1, t+3]}}}.
    \end{split}
\end{equation}
Here we used the Taylor expansion and the smallness of 
$\sqrt{h_{T_{[t-1, t+3]}}}$.

Similarly, for $k,j\in \{1,\cdots, n-1\}$, $k\neq j$, we have
\begin{align*}
     -\frac{2|\Hess u|}{|\nabla u|} \sqrt{g(e_k,e_k)}\sqrt{g(e_j,e_j)}\leq \left(g(e_k, e_j)\right)'\leq \frac{2|\Hess u|}{|\nabla u|} \sqrt{g(e_k,e_k)}\sqrt{g(e_j,e_j)} .
\end{align*}
Combining with estimates in \eqref{eq:ek} yields that for all $t\in T_{[t_i, t_i+L]}$
\begin{equation}\label{eq:ekej}
    \begin{split}
        |g(e_k, e_j)|\leq CL\sqrt{h_{T_{[t-1, t+3]}}}.
    \end{split}
\end{equation}
Here the constant $C$ depends on the uniform upper bound of $h_{T_{[t-1, t+3]}}$. Since $h_{T_{[t-1, t+3]}}$ is uniformly small, we may assume that $C$ depends on $\delta$.

By \eqref{eq:ek} and \eqref{eq:ekej}, taking the supremum over $p\in N$ yields:
\begin{align}\label{eq:GHdistanceByHessian}
     \dist_{\mr{GH}}(T_{[t, t+2]}, [0,2]\times N_i )\le C\left(\int_{\{t-1\le u\le t+3\}}\left|\Hess u\right|^2 dV_g\right)^{\frac{1}{2}}
\end{align}
\end{proof}

\subsection{Main arguments}\label{subs:main-arguments}
In this subsection we complete the proof of Theorem \ref{thm:uniqueness} and Theorem \ref{thm:effective-uniqueness}, assuming the following decay estimate:  
\begin{theorem}\label{thm:decay-of-Hess-L2}
    There exist $\beta>0$, such that for any fixed $L>0$ and small enough $\delta>0$, there exists $C\defeq C(\delta,g_N)>0$ as long as \eqref{eq:closeness-to-tube} holds, 
then for any $t^*\in [t_i, t_i+L]$
\begin{equation}\label{eq:decay-of-Hess-L2}
    \int_{\{ u\ge t^*\}}\left|\Hess u\right|^2 dV_g \leq C(t^*-t_i)^{-\beta -1}.
\end{equation}  
\end{theorem}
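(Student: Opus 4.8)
The plan is to derive \eqref{eq:decay-of-Hess-L2} from the \L{}ojasiewicz--Simon inequality \eqref{eq:LSinequ-for-A'} for the monotone quantity $\mA(t)\defeq\int_{\{u=t\}}|\nabla u|^3\,dA_g$ constructed in Section~\ref{subsec:MonotoneA}, after first rewriting both sides of \eqref{eq:decay-of-Hess-L2} in terms of $\mA$. Since $\Ric_g=0$ and $\Delta u=0$, the Bochner formula gives $\Delta|\nabla u|^2=2|\Hess u|^2$, and applying the divergence theorem over a tube and over the end $\{u\ge t^*\}$ (using that the outward normal along $\{u=t^*\}$ points in the $-\nabla u$ direction) produces the exact identities
\[
\int_{T_{[a,b]}}|\Hess u|^2\,dV_g=\tfrac12\bigl(\mA'(b)-\mA'(a)\bigr),
\qquad
\int_{\{u\ge t^*\}}|\Hess u|^2\,dV_g=-\tfrac12\,\mA'(t^*).
\]
The first identity shows that $\mA'$ is non-decreasing, hence $\mA$ is convex; together with the monotonicity and boundedness of $\mA$ from Section~\ref{subsec:MonotoneA}, this forces $\mA'(t)\to0$ as $t\to\infty$, which is what makes the second identity meaningful. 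Consequently \eqref{eq:decay-of-Hess-L2} is equivalent to the decay bound $f(t^*)\le C(t^*-t_i)^{-\beta-1}$ for the nonnegative, non-increasing function $f(t)\defeq-\mA'(t)\to0$.

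The core of the argument is \eqref{eq:LSinequ-for-A'}, which in this notation reads
\[
f(t)^{2-\alpha}\le C\bigl(f(t-1)-f(t+3)\bigr)=2C\!\int_{T_{[t-1,t+3]}}\!|\Hess u|^2\,dV_g ,
\]
for $t$ with $[t-1,t+3]\subset[t_i,t_i+L]$ and $\delta$ small. I would establish it following \cite{ColdingMinicozziUniqueness}: under \eqref{eq:closeness-to-tube}, together with the $C^1$ Hessian bound of Theorem~\ref{interior-estimate} (and the $C^\infty$-convergence of the tubes to $[0,L]\times N$ from its proof), the level set $\{u=t\}$ is a $C^{2,\beta}$-small graph over $(N,g_N)$, its induced metric $g_t$ is $C^{2,\beta}$-close to $g_N$, and $|\nabla u|$ is $C^{2,\beta}$-close to $1$; the constancy of $\mS$ then places $(g_t,|\nabla u|)$ near the critical point $(g_N,1)$ in the constraint manifold $\mc G_1$. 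One chains the properties of the weighted Einstein--Hilbert functional $\mE$: property~\ref{R5} gives $\mA'(t)=\mE(g_t,|\nabla u|)+O\bigl(h_{[t-1,t+3]}\bigr)$; the \L{}ojasiewicz--Simon inequality~\ref{R3} for $\mE$ bounds $|\mE(g_t,|\nabla u|)|^{2-\alpha}$ by $|\nabla_1\mE(g_t,|\nabla u|)|^2$; and property~\ref{R4} bounds the latter by $C\int_{T_{[t-1,t+3]}}|\Hess u|^2$. Because $2-\alpha>1$ and $h_{[t-1,t+3]}$ is small, the error terms are controlled by the right-hand side and absorbed, with~\ref{R1} and~\ref{R2} ensuring $(g_N,1)$ is a critical point of $\mE$ with $\mE(g_N,1)=\lim_{t\to\infty}\mA'(t)=0$.

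Granting \eqref{eq:LSinequ-for-A'}, the polynomial decay follows from the calculus iteration of \cite{ColdingMinicozziUniqueness}*{Lemma~2.73}. Evaluating the inequality at $t=t_i+4m+1$ and setting $a_m\defeq f(t_i+4m)$, monotonicity of $f$ yields $a_{m+1}^{2-\alpha}\le f(t_i+4m+1)^{2-\alpha}\le C'(a_m-a_{m+1})$, i.e.\ $a_m-a_{m+1}\ge c_0\,a_{m+1}^{1+\theta}$ with $\theta\defeq1-\alpha\in(0,1)$; the standard dichotomy (either $a_{m+1}\le\tfrac12 a_m$, or else $a_{m+1}^{-\theta}-a_m^{-\theta}$ exceeds a fixed positive constant) then gives $a_m\le C_\ast(m+1)^{-1/\theta}$, hence $f(t)\le C(t-t_i)^{-1/(1-\alpha)}$ for all $t\in[t_i,t_i+L]$, with $C$ depending only on $\delta$, $g_N$ and the fixed finite number $f(R_0)$ --- in particular not on $L$. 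Taking $\beta\defeq\alpha/(1-\alpha)>0$ and using the second identity above yields \eqref{eq:decay-of-Hess-L2}.

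The main obstacle is \eqref{eq:LSinequ-for-A'}, and within it the two genuinely analytic inputs: property~\ref{R3} and the transfer estimates~\ref{R4},~\ref{R5}. Establishing~\ref{R3} amounts to computing the second variation of $\mE$ on $\mc G_1$ at $(g_N,1)$, recognizing it (after a divergence gauge fixing) as a Fredholm operator built from the Lichnerowicz Laplacian of $(N,g_N)$, verifying that $\mE$ is real-analytic near $(g_N,1)$, and then running Simon's \L{}ojasiewicz--Simon machinery while treating the volume constraint and the diffeomorphism gauge with care, in the manner of \cite{ColdingMinicozziUniqueness}*{Section~3}. The transfer estimates~\ref{R4},~\ref{R5} require precise control of the geometry of the level sets of $u$ and of how $\mA'(t)$ and $\int_{T_{[t-1,t+3]}}|\Hess u|^2$ are recovered from $(g_t,|\nabla u|)$; Ricci-flatness and Theorem~\ref{interior-estimate} enter here essentially. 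These verifications occupy Sections~\ref{sec:verificationR1-R5} and~\ref{sec:Decay-of-A'}.
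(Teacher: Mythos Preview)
Your proposal is correct and follows essentially the same route as the paper: rewrite both sides in terms of $\mA'$ via Bochner, establish the \L{}ojasiewicz--Simon inequality \eqref{eq:LSinequ-for-A'} for $f=-\mA'$ by chaining \ref{R1}--\ref{R5} (this is the paper's Theorem~\ref{thm:LStoDecay}), and then iterate to get polynomial decay with exponent $\tfrac{1}{1-\alpha}=\beta+1$. The only slip is the citation: the calculus lemma you want is \cite{ColdingMinicozziUniqueness}*{Lemma~2.42} (not Lemma~2.73, which is the summation lemma used later in Proposition~\ref{decay-of-summand-GH-distance}); the paper applies it directly to get the increment $\mB^{\alpha-1}(t+3)-\mB^{\alpha-1}(t-1)\ge C>0$ and then sums, which is equivalent to your dichotomy argument.
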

This decay estimate will be proven in Section \ref{sec:Decay-of-A'}. By the lines of \cite{ColdingMinicozziUniqueness}*{Proposition 2.65}, we obtain that

 \begin{proposition}\label{decay-of-summand-GH-distance}
      Fix some integer $m_2>m_1\ge 2$. There exists $\tilde \beta>0$ and $\bar C>0$ so that the following holds. Let $i_0$ be an integer such that \eqref{eq:closeness-to-tube} holds for some $L\ge m_2+2$ and $i_0$, then 
      \[
      \sum_{j=m_1}^{m_2}\Phi_{t_{i_0}+j}\le \bar C m_1^{-\tilde \beta}
      \]
      
 \end{proposition}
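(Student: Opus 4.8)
The plan is to feed the two results already available --- the pointwise bound $\Phi_t\le C\,h_{[t-1,t+3]}^{1/2}$ of Proposition~\ref{prop:Bound-Phi-by-Hess} and the tail decay of $\int|\Hess u|^2$ in Theorem~\ref{thm:decay-of-Hess-L2} --- into the elementary summation argument of \cite{ColdingMinicozziUniqueness}*{Lemma~2.73} (this is exactly the scheme of \cite{ColdingMinicozziUniqueness}*{Proposition~2.65}). Write $t_0:=t_{i_0}$ and, for $j\ge m_1$, set $a_j:=h_{[t_0+j-1,\,t_0+j+3]}=\int_{\{t_0+j-1\le u\le t_0+j+3\}}|\Hess u|^2\,dV_g$; all constants below will be independent of $i_0$. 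The hypothesis $L\ge m_2+2$ is what guarantees that, for $m_1\le j\le m_2$, the tubes $T_{[t_0+j-1,\,t_0+j+3]}$ and the levels $t_0+j$ stay inside the range $[t_0,t_0+L]$ on which the closeness assumption \eqref{eq:closeness-to-tube}, and hence Proposition~\ref{prop:Bound-Phi-by-Hess} and Theorem~\ref{thm:decay-of-Hess-L2}, are in force; so Proposition~\ref{prop:Bound-Phi-by-Hess} gives $\Phi_{t_0+j}\le C\,a_j^{1/2}$ for those $j$.

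Next I would turn Theorem~\ref{thm:decay-of-Hess-L2} into a tail bound on the $a_j$. Because a point $x$ with $u(x)\ge t_0+k-1$ lies in at most five of the overlapping tubes $T_{[t_0+j-1,\,t_0+j+3]}$, one has $\sum_{j\ge k}a_j\le 5\int_{\{u\ge t_0+k-1\}}|\Hess u|^2\,dV_g$; applying Theorem~\ref{thm:decay-of-Hess-L2} with $t^*=t_0+k-1$ then yields $\sum_{j\ge k}a_j\le C\,k^{-\beta-1}$ for $2\le k\le m_2$, where $\beta>0$ is the exponent furnished by that theorem. To finish, I would split $\{j\ge m_1\}$ into dyadic blocks $B_\ell=[2^\ell,2^{\ell+1})$ and estimate, on each block meeting $[m_1,m_2]$, by Cauchy--Schwarz together with the tail bound,
\[
    \sum_{j\in B_\ell}a_j^{1/2}\le(\#B_\ell)^{1/2}\Bigl(\sum_{j\in B_\ell}a_j\Bigr)^{1/2}\le C\,2^{\ell/2}\bigl(2^{\ell}\bigr)^{-(\beta+1)/2}=C\,2^{-\ell\beta/2}.
\]
Summing the geometric series over the blocks with $2^{\ell+1}>m_1$ gives $\sum_{j=m_1}^{m_2}a_j^{1/2}\le\bar C\,m_1^{-\beta/2}$, and combining with $\Phi_{t_0+j}\le C\,a_j^{1/2}$ yields the proposition with $\tilde\beta:=\beta/2$. (An Abel-summation variant, testing $\sum a_j$ against the weight $j^{2\sigma}$ for $\tfrac12<\sigma<\tfrac{\beta+1}{2}$, gives the same exponent.)

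I do not expect a real obstacle: the analytic substance sits entirely in Proposition~\ref{prop:Bound-Phi-by-Hess} and Theorem~\ref{thm:decay-of-Hess-L2}, and what is left is a purely combinatorial interpolation. The two details that need attention are the bounded-overlap count used to pass from $\sum_{j\ge k}a_j$ to a single integral, and the bookkeeping that keeps every application of the two cited results inside $[t_0,t_0+L]$ --- which is precisely the reason the hypothesis requires the buffer $L\ge m_2+2$.
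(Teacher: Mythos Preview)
Your proposal is correct and follows essentially the same scheme as the paper: feed Proposition~\ref{prop:Bound-Phi-by-Hess} and Theorem~\ref{thm:decay-of-Hess-L2} into an elementary summation argument. The only cosmetic difference is that the paper runs the weighted Cauchy--Schwarz/Abel-summation route (your parenthetical variant), defining $a_j:=\int_{\{u\ge t_{i_0}+j-1\}}|\Hess u|^2$ so that $h_{[t_0+j-1,t_0+j+3]}=a_j-a_{j+4}$ and then invoking \cite{ColdingMinicozziUniqueness}*{Lemma~2.73} directly, whereas your primary argument makes the dyadic block estimate explicit; both yield an exponent $\tilde\beta$ comparable to $\beta/2$.
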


\begin{proof}
Choose $\sigma$ such that $1 < 2\sigma < 1+\beta$. By the Cauchy--Schwarz inequality,
\[
    \sum_{j=m_1}^{m_2} \Phi_{t_{i_0}+j}
    \le
    \left( \sum_{j=m_1}^{m_2} \Phi_{t_{i_0}+j}^2\, j^{2\sigma} \right)^{\!1/2}
    \left( \sum_{j=m_1}^{m_2} j^{-2\sigma} \right)^{\!1/2}.
\]
The series $\sum_j j^{-2\sigma}$ converges whenever $2\sigma>1$. Thus it remains to bound
the first factor on the right-hand side.

Applying Proposition~\ref{prop:Bound-Phi-by-Hess}, we obtain
\[
    \sum_{j=m_1}^{m_2} \Phi_{t_{i_0}+j}^2\, j^{2\sigma}
    \le
    C \sum_{j=m_1}^{\infty}
    \left(
        \int_{\{t_{i_0}+j-1 \le u \le t_{i_0}+j+3\}}
        |\Hess u|^2 \, dV_g
    \right) j^{2\sigma}.
\]

By Theorem~\ref{thm:decay-of-Hess-L2}, choosing $t^* = t_{i_0}+j-1$, $j=m_1, \cdots, m_2$, yields
\[
    a_j \defeq
    \int_{\{u \ge t_{i_0}+j-1\}}
    |\Hess u|^2 \, dV_g
    \le
    C (t_{i_0}+j-1-t_{i_0})^{-\beta-1}
    \le
    C j^{-\beta-1}.
\]
Therefore, it suffices to estimate
\[
    \sum_{j=m_1}^{m_2} (a_j - a_{j+4})\, j^{2\sigma}.
\]
We are now in a position to apply
\cite{ColdingMinicozziUniqueness}*{Lemma~2.73}, which yields
\[
    \sum_{j=m_1}^{m_2} \Phi_{t_{i_0}+j}^2\, j^{2\sigma}
    \le C\sum_{j=m_1}^{m_2} (a_j - a_{j+4})\, j^{2\sigma}\le 
    \bar C\, m_1^{-\tilde{\beta}},
    \qquad
    \tilde{\beta} \defeq -2\sigma + \beta + 1 > 0.
\]
This is the desired estimate.
\end{proof}

\begin{proof}[Proof of Theorem \ref{thm:uniqueness}]
We start by choosing constants: 
\begin{enumerate}
    \item \label{(1)} Fix $\delta\in (0,\tilde \delta)$, so that for any fixed sufficiently large $L\in \bN$, we can find a sufficiently large index $i_0=i_0(L,\delta)$ such that \ref{R1}-\ref{R5}, Proposition \ref{prop:Bound-Phi-by-Hess} and Theorem \ref{thm:decay-of-Hess-L2} hold on the tube $T_{[t_{i_0}, t_{i_0}+L]}$. 
    
    \item \label{(2)} Proposition \ref{decay-of-summand-GH-distance} gives $\bar C$ and $\tilde \beta$, such that for $m_1,m_2\in \bN$ with $m_2>m_1\ge 2$ it holds
    \begin{equation}\label{eq:Sum-of-Phi}
    \sum_{j=m_1}^{m_2} \Phi_{t_{i_0}+j}\leq \bar {C} m_1^{-\tilde \beta}.
    \end{equation}
    We choose $m_1$ such that $\bar {C} m_1^{-\tilde \beta}<\delta/100$. Fix $L\ge m_2+2$. 

    \item \label{(3)} By the almost splitting theorem in \cite{CheegerColding96} (see also \cite{XuLocalestimates}*{Theorem 2.13} for the quantitative version), we fix $\eps>0$ so that if $\mA'(t-1)-\mA'(t+3)>-\eps$, then $\Phi_{t}<\delta/100$ and via GH approximation constructed by $u$ we have
    \[
    \dist_{\mr{GH}}\left(T_{[t,t+2]}, \left([0,2]\times \{u=t\} \right)\right)\le 2\Phi_{t}.
    \]
    Note that this is not Proposition \ref{prop:Bound-Phi-by-Hess}, as here we do not assume at level $t$, $\{u=t\}$ is close (or diffeomorphic) to the cross section $N$ in the given smooth cylinder.
\end{enumerate}
Since $\delta, L,\eps$ are fixed, we now choose the index $i_0(L,\eps,\delta)$ sufficiently large such that \eqref{eq:closeness-to-tube} holds for $ t\in [t_{i_0}, t_{i_0}+L]$ with $\delta/100$ in place of $\delta$, and
\begin{equation}\label{S1}
    \mA'(t_{i_0}-1)>-\eps.
\end{equation}
 To prove the uniqueness, in view of our uniqueness criterion Lemma \ref{criterion-of-uniqueness}, we would like to  increase $m_2$ or equivalently increase $L$, while \eqref{eq:Sum-of-Phi} still holds with fixed $i_0$. A potential problem is that when $m_2\to\infty$ $L\ge m_2+2\to \infty$ then $i_0\to\infty$. The key observation is that, if we increase $L$ while \eqref{eq:closeness-to-tube} still holds for the same $i_0$, then Proposition \ref{decay-of-summand-GH-distance}, hence \eqref{eq:Sum-of-Phi} still holds with the same $i_0$ and the increased $L$. We proceed by induction. We first show that the identity~\eqref{eq:Sum-of-Phi} can be extended to $m_2+1$.

On one hand, by the monotonicity of $\mA'$, \eqref{S1} implies that for any $t\in [t_{i_0}+L,t_{i_0}+L+1]$
\begin{align*}
    \mA'(t-1)-\mA'\left(t+3\right)>-\eps.
\end{align*}
It ensures by \eqref{(3)} that we have 
\begin{align*}
    \dist_{\mr{GH}}\left(T_{[t,t+2]}, [0,2]\times \{u=t\}\right)\le 2\Phi_{t}<\frac{2\delta}{100}.
\end{align*}
On the other hand, for the smaller tube $T_{[t,t+1]}\subset T_{[t_{i_0}+L,t_{i_0}+L+2]}$ we have 
\begin{align*}
    \dist_{\mr{GH}}\left(T_{[t, t+1]}, [0,1]\times N\right)<\frac{\delta}{100}.
\end{align*}
The triangle inequality yields
\begin{align*}
\dist_{\mr{GH}}\left(T_{[t+1,t+2]}, [0, 1]\times N\right)\le\dist_{\mr{GH}} (T_{[t+1,t+2]},T_{[t,t+1]})+\dist_{\mr{GH}}(\left(T_{[t, t+1]}, [0,1]\times N\right)) <\frac{5\delta}{100}.
\end{align*}
Using the GH approximation constructed by $u$, for any $t\in [t_{i_0}+L,t_{i_0}+L+1]$, we have that
\begin{align}\label{eq:distance-of-tube-L+1}
\dist_{\mr{GH}}\left(T_{[t,t+2]}, [0,2]\times N\right)<\delta/100+5\delta/100<\delta,
\end{align}
this in turn implies that the result in Proposition \ref{decay-of-summand-GH-distance} holds for  $t\in {[t_{i_0},t_{i_0}+L+1]}$:
\begin{align*}
     \sum_{j=m_1}^{m_2+1} \Phi_{t_{i_0}+j}\leq \overline{C} m_1^{-\tilde \beta}<\frac{\delta}{100}.
\end{align*}
Now we have extended \eqref{eq:Sum-of-Phi} up to $m_2+1$, at the cost of an additional error $5\delta/100$ in the Gromov--Hausdorff distance estimate \eqref{eq:distance-of-tube-L+1}. We next carry out the inductive step, in which we show that this error does not accumulate under further extensions.
 
Assume that \eqref{eq:Sum-of-Phi} has been extended to $m_2+k$, so that for all
$t\in [t_{i_0},\, t_{i_0}+L+k]$,
\begin{equation*}
    \delta_k \defeq \dist_{\mr{GH}}\!\left(T_{[t,t+2]},\, [0,2]\times N\right)
    < \frac{\delta}{2}.
\end{equation*}

By the same reasoning, for $t\in [t_{i_0}+L+k,\, t_{i_0}+L+k+1]$, it follows from
\eqref{(3)} and the triangle inequality that
\[
    \dist_{\mr{GH}}\!\left(T_{[t+1,t+2]},\, [0,1]\times N\right)
    < 4\Phi_t + \delta_k
    \le \frac{4\delta}{100} + \delta_k .
\]
Consequently,
\[
    \dist_{\mr{GH}}\!\left(T_{[t,t+2]},\, [0,2]\times N\right)
    < \frac{5\delta}{100} + \delta_k
    < \delta .
\]
This in turn implies that the conclusion of Proposition~\ref{decay-of-summand-GH-distance}
holds for all $t\in [t_{i_0},\, t_{i_0}+L+k+1]$, namely,
\begin{align*}
    \sum_{j=m_1}^{m_2+k+1} \Phi_{t_{i_0}+j}
    \le \overline{C}\, m_1^{-\tilde \beta}
    < \frac{\delta}{100}.
\end{align*}
In particular, for $t\in [t_{i_0}+L+k,\, t_{i_0}+L+k+1]$ we have
\begin{align*}
    \dist_{\mr{GH}}\!\left(T_{[t,t+2]},\, [0,2]\times N\right)
    &\le
    \dist_{\mr{GH}}\!\left(T_{[t,t+2]},\, T_{[t_{i_0}+L,\, t_{i_0}+L+2]}\right)
    + \dist_{\mr{GH}}\!\left(T_{[t_{i_0}+L,\, t_{i_0}+L+2]},\, [0,2]\times N\right) \\
    &\le
    4\sum_{j=m_2}^{m_2+k+1} \Phi_{t_{i_0}+j}
    + \frac{\delta}{100}
    \le
    4\bar C\, m_1^{-\tilde \beta} + \frac{\delta}{100}
    \le \frac{5\delta}{100},
\end{align*}
which is independent of $k$.

In this way, we see that \eqref{eq:Sum-of-Phi} can be extended to $m_2=\infty$.
The uniqueness follows. The convergence rate estimates follow from
Proposition~\ref{prop:Bound-Phi-by-Hess} and
Theorem~\ref{thm:decay-of-Hess-L2}.


\end{proof}

\begin{proof}[Proof of Theorem \ref{thm:effective-uniqueness}]
Note that in the proof of Theorem \ref{thm:uniqueness}. if \eqref{S1} is replaced by 
\[
\mA'(t_{i_0}-1)-\mA'(t_{i_0}+4k-1)>-\eps,
\]
for some $k\in \N^+$, then the proof is still valid to the point where \eqref{eq:Sum-of-Phi} is extended to $m_2+k$. 

We take $\eps$ small enough so that $\mA'(t_1-C)-\mA'(t_2+C)\ge -\eps$, and take $\delta$ as the $\delta/100$ in the proof of Theorem \ref{thm:uniqueness}, then the first assertion of Theorem~\ref{thm:effective-uniqueness} follows immediately from the proof of Theorem~\ref{thm:uniqueness}, because we have the required \eqref{eq:closeness-to-tube} from our assumption \eqref{eq:epsilon-closeness}.

Moreover, arguing exactly as in the proof of Theorem~\ref{thm:uniqueness}, the triangle inequality yields an \emph{effective Cauchy bound}: for $t_1 < t < s-1 < t_2-1$,
\begin{equation}
    \dist_{\mathrm{GH}}\bigl((T_{[t,t+1]}, x_t), (T_{[s,s+1]}, x_s)\bigr)\le 4\sum_{j=\lfloor t-t_1\rfloor}^{\lceil s-t_1\rceil}\Phi_j
    \le C\, (t - t_1)^{-\tilde{\beta}}.
\end{equation}
Consequently, the Gromov--Hausdorff distance between any two such tubes decays at the claimed rate. Finally, the same estimate implies that $\Phi_t$ also decays with the desired polynomial rate.

\end{proof}

\section{Monotone quantity $\mA$ and its approximation $\mE$}\label{sec:A-and-E}

\subsection{Monotone quantity}\label{subsec:MonotoneA}
In this subsection, we define the monotone quantity using the harmonic replacement $u$ on the end. We begin by introducing weighted area functionals. Define
\begin{equation}
    \mS(t) \defeq \int_{\{u=t\}} |\nabla u|\, dA_g,
\end{equation}
and
\begin{equation}
    \mA(t) \defeq \int_{\{u=t\}} |\nabla u|^3 \, dA_g.
\end{equation}

A direct computation yields
\begin{align*}
\mS'(t)
&= \frac{d}{dt}\int_{\{u=t\}} |\nabla u|\, dA_g \\
&= \int_{\{u=t\}}
\Hess u\!\left(\frac{\nabla u}{|\nabla u|}, \nabla u\right) dA_g
-
\int_{\{u=t\}}
\Hess u\!\left(\frac{\nabla u}{|\nabla u|}, \nabla u\right) dA_g
= 0.
\end{align*}
Hence, the weighted area measure $|\nabla u|\, dA_g$ on the level sets $\{u=t\}$ is independent of $t$.

By Anderson’s convergence theorem \cite{AndersonConvergence}, we have smooth convergence of the tubes $T_{[t_i,t_i+1]}$ to $[0,1]\times N$, and along the sequence $\{t_i\}$ in \eqref{eq:convergent-subsequence-GH-sense}, the function $u$ converges to the coordinate function on the $\R$-factor of $\R\times N$. In particular, $|\nabla u|\to 1$ as $t_i\to\infty$, and
\[
    \haus^{n-1}(\{u=t_i\}) \to \vol_{g_N}(N).
\]
It follows that for all sufficiently large $t$ (so that $u$ is defined and harmonic),
\begin{equation}\label{eq:constant-weighted-volume}
    \mS(t)
    = \int_{\{u=t\}} |\nabla u|\, dA_g
    = \lim_{t\to\infty} \int_{\{u=t\}} |\nabla u|\, dA_g
    = \lim_{i\to\infty} \int_{\{u=t_i\}} |\nabla u|\, dA_g
    = \vol_{g_N}(N).
\end{equation}
Compare \cite{ColdingMinicozziUniqueness}*{(2.18)}. By the same reasoning, we also obtain
\begin{equation}\label{eq:subseq-of-A-conv}
    \lim_{i\to\infty} \mA(t_i)
    = \lim_{i\to\infty} \int_{\{u=t_i\}} |\nabla u|^3 \, dA_g
    = \vol_{g_N}(N).
\end{equation}

Recall that for a general smooth function $f \in C^{\infty}(M)$, one has
\begin{align*}
    \frac{d}{dt} \int_{\{u = t\}} f \, dA_g
    =
    \int_{\{u = t\}} 
    \left\langle \nabla f, \frac{\nabla u}{|\nabla u|^2} \right\rangle dA_g
    +
    \int_{\{u = t\}} 
    f\, \frac{H_{\{u = t\}}}{|\nabla u|} \, dA_g ,
\end{align*}
where $H_{\{u=t\}}$ denotes the mean curvature of the level set $\{u=t\}$. 
Substituting $f = |\nabla u|^3$ and using the identity
\[
    H_{\{u=t\}} = -\frac{\Hess u(\nabla u,\nabla u)}{|\nabla u|^3},
\]
we obtain
\begin{align*}
    \mA'(t)
    &= \int_{\{u = t\}} 
    \left\langle \nabla |\nabla u|^3, \frac{\nabla u}{|\nabla u|^2} \right\rangle dA_g
    - \int_{\{u = t\}} 
    \Hess u\!\left(\frac{\nabla u}{|\nabla u|}, \nabla u\right) dA_g \\
    &= 3 \int_{\{u = t\}} 
    \Hess u\!\left(\frac{\nabla u}{|\nabla u|}, \nabla u\right) dA_g
    - \int_{\{u = t\}} 
    \Hess u\!\left(\frac{\nabla u}{|\nabla u|}, \nabla u\right) dA_g \\
    &= \int_{\{u = t\}} 
    \left\langle \nabla |\nabla u|^2, \frac{\nabla u}{|\nabla u|} \right\rangle dA_g .
\end{align*}

Since $u$ is harmonic only on the noncompact superlevel set $\{u\ge t\}$, we hope to integrate by parts to get an integral on the nonompact set $\{u\ge t\}$ and use harmonicity. However, we cannot directly continue as described, because we do not a priori know whether the resulting integral is finite. To overcome this difficulty, we consider the difference
\begin{align*}
    \mA(s) - \mA(t)
    \defeq
    \int_{\{u = s\}} |\nabla u|^3 \, dA_g
    -
    \int_{\{u = t\}} |\nabla u|^3 \, dA_g ,
    \qquad s \ge t .
\end{align*}
This allows us to perform integration by parts over the compact region $\{t \le u \le s\}$. Indeed, we compute
\begin{equation}\label{eq:A'-monotonicity}
\begin{split}
    \mA'(s) - \mA'(t)
    &=
    \int_{\{u = s\}} 
    \left\langle \nabla |\nabla u|^2, \frac{\nabla u}{|\nabla u|} \right\rangle dA_g
    -
    \int_{\{u = t\}} 
    \left\langle \nabla |\nabla u|^2, \frac{\nabla u}{|\nabla u|} \right\rangle dA_g \\
    &=
    \int_{\{t \le u \le s\}} \Delta |\nabla u|^2 \, dV_g \\
    &=
    2 \int_{\{t \le u \le s\}}
    \Bigl( |\Hess u|^2
    + \langle \nabla \Delta u, \nabla u \rangle
    + \Ric(\nabla u, \nabla u) \Bigr) dV_g \\
    &=
    2 \int_{\{t \le u \le s\}} |\Hess u|^2 \, dV_g
    \;\ge\; 0 ,
\end{split}
\end{equation}
where in the last step we used the harmonicity of $u$ and the Bochner formula. Consequently, $\mA'(t)$ is monotone non-decreasing.

\begin{proposition}\label{prop:A'-Monotone}
We have
\begin{align}
    \mA'(t) &\le 0, \quad \forall\, t \gg 1, \label{eq:nonpositive-A'}\\
    \lim_{t \to \infty} \mA'(t) &= 0. \label{eq:A'-limit}
\end{align}
In particular, $\mA(t)$ is monotone non-increasing.
\end{proposition}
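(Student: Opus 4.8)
The plan is to deduce everything from three facts already established: the monotonicity $\mA'(s)\ge\mA'(t)$ for $s\ge t$ coming from the Bochner identity \eqref{eq:A'-monotonicity}; the trivial non-negativity $\mA(t)=\int_{\{u=t\}}|\nabla u|^3\,dA_g\ge 0$; and the limit $\lim_{i\to\infty}\mA(t_i)=\vol_{g_N}(N)$ along the special sequence, proved in \eqref{eq:subseq-of-A-conv}. Throughout I would work on a half-line $[R_0,\infty)$ on which $u$ is defined, harmonic, and $C^0$-asymptotic to $b_\gamma$ with $|\nabla u|$ uniformly close to $1$; there every such $t$ is a regular value, $\{u=t\}$ is a compact regular hypersurface, $\mA$ is $C^1$ with derivative given by the formula derived just before \eqref{eq:A'-monotonicity}, and $\mA(s)-\mA(t)=\int_t^s\mA'(\tau)\,d\tau$.

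First I would prove \eqref{eq:nonpositive-A'}. Suppose toward a contradiction that $\mA'(t_0)=c>0$ for some $t_0\ge R_0$. Since $\mA'$ is non-decreasing we get $\mA'(\tau)\ge c$ for all $\tau\ge t_0$, hence $\mA(t)\ge \mA(t_0)+c\,(t-t_0)$ for all $t\ge t_0$. Letting $t=t_i\to\infty$, the left-hand side converges to the finite number $\vol_{g_N}(N)$ by \eqref{eq:subseq-of-A-conv}, while the right-hand side tends to $+\infty$, a contradiction. Therefore $\mA'(t)\le 0$ for all $t\gg 1$, and consequently $\mA$ is monotone non-increasing, giving the last assertion of the proposition.

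It then remains to prove \eqref{eq:A'-limit}. Since $\mA'$ is monotone non-decreasing and, by the previous paragraph, bounded above by $0$, the limit $L\defeq\lim_{t\to\infty}\mA'(t)$ exists with $L\le 0$, and moreover $\mA'(\tau)\le L$ for every $\tau\ge R_0$. Integrating, $\mA(t)\le \mA(t_0)+L\,(t-t_0)$ for all $t\ge t_0$. If $L<0$ the right-hand side tends to $-\infty$, contradicting $\mA(t)\ge 0$; hence $L=0$, which is \eqref{eq:A'-limit}. (As a byproduct, $\mA$ is non-increasing and bounded below, so it has a finite limit at infinity, which by \eqref{eq:subseq-of-A-conv} equals $\vol_{g_N}(N)$, matching $\mS(t)\equiv\vol_{g_N}(N)$ from \eqref{eq:constant-weighted-volume}.)

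I do not expect a genuine obstacle here: the analytic content has already been absorbed into the monotonicity identity \eqref{eq:A'-monotonicity} and the constancy of the weighted area $\mS(t)$. The only points needing a modicum of care are bookkeeping ones — checking that $\mA$ is honestly $C^1$ with the stated derivative (which is why one restricts to the range of $t$ where $\{u=t\}$ is a compact regular level set, guaranteed by the $C^0$-asymptotics of $u$ to $b_\gamma$ and $|\nabla u|\approx 1$), and invoking the fundamental theorem of calculus $\mA(s)-\mA(t)=\int_t^s\mA'$. One can bypass even these by phrasing the whole argument via convexity: $\mA$ is convex on $[R_0,\infty)$ since $\mA'$ is non-decreasing, a convex function that stays bounded along a sequence $t_i\to\infty$ must be non-increasing with $\mA'\uparrow$ some $L\le 0$, and boundedness below then forces $L=0$.
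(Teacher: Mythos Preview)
Your proof is correct and follows essentially the same argument as the paper: both use the monotonicity of $\mA'$ from \eqref{eq:A'-monotonicity} together with the subsequential convergence \eqref{eq:subseq-of-A-conv} to contradict $\mA'(t_0)>0$, and then repeat the idea (you use $\mA\ge 0$, the paper alludes to the same argument) to rule out a strictly negative limit for $\mA'$. Your additional remarks on regularity of level sets and the convexity reformulation are fine but not needed beyond what the paper already assumes.
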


\begin{proof}
Suppose that \eqref{eq:nonpositive-A'} does not hold. Since $\mA'$ is monotone non-decreasing, there exists $\bar t$ such that for all $t \ge \bar t$,
\[
    \mA'(t) \ge \mA'(\bar t) \ge c > 0.
\]
It then follows for $t_i$ in \eqref{eq:convergent-subsequence-GH-sense} that
\[
    \mA(t_i) - \mA(\bar t)
    = \int_{\bar t}^{t_i} \mA'(\tau)\, d\tau
    \ge c\,(t_i - \bar t) \to \infty
    \quad \text{as } i \to \infty,
\]
which contradicts \eqref{eq:subseq-of-A-conv}, namely that
\(
    \mA(t_i) \to \vol_{g_N}(N) < \infty
\)
as $i \to \infty$. This proves \eqref{eq:nonpositive-A'}. The limit \eqref{eq:A'-limit} follows by the same argument.
\end{proof}

As a consequence, by the monotonicity of $\mA$, we can strengthen \eqref{eq:subseq-of-A-conv} to
\begin{equation}
    \lim_{t \to \infty} \mA(t) = \vol_{g_N}(N).
\end{equation}

\begin{remark}
Proposition~\ref{prop:A'-Monotone} also implies that for any sufficiently large $t \ge 0$ (so that $u$ is defined and harmonic),
\begin{equation}\label{eq:L2-finiteness}
    \int_{\{u \ge t\}} |\Hess u|^2 \, dV_g < \infty.
\end{equation}
Indeed, this follows by letting $s \to \infty$ in \eqref{eq:A'-monotonicity} and using \eqref{eq:A'-limit}.

Moreover, in view of \eqref{eq:L2-finiteness}, we may continue the computation of $\mA'$ to obtain the identities
\begin{align}\label{eq:A'-formal}
    \mA'(t)
    &= \frac{d\mA(t)}{dt}
    = -2 \int_{\{u \ge t\}} |\Hess u|^2 \, dV_g
    \le 0,
\end{align}
and
\begin{align}\label{eq:A''-formal}
    \mA''(t)
    = \frac{d^2 \mA(t)}{dt^2}
    = 2 \int_{\{u = t\}} \frac{|\Hess u|^2}{|\nabla u|} \, dA_g
    \ge 0.
\end{align}
Thus, $\mA(t)$ is not only non-increasing, but also convex in $t$.
\end{remark}

\begin{remark}\label{rmk:CM'sA}
We compare our monotone quantity with that introduced in \cite{ColdingMinicozziUniqueness}.  
In \cite{ColdingMinicozziUniqueness}, the monotone quantity
\[
    A(r)=r^{1-n}\int_{\{b=r\}}|\nabla b|^{3},
    \qquad b \defeq G^{\frac{1}{2-n}},
\]
satisfies
\begin{align*}
    A'(r)
    &= -\frac{1}{2}\, r^{\,n-3}
    \int_{\{r \le b\}}
    b^{2-2n}
    \left|
        \Hess(b^{2})
        -\frac{\Delta(b^{2})}{n}\, g
    \right|^{2}, \\
    A''(r)
    &= \frac{n-3}{r}\, A'(r)
    + \frac{r^{\,n-3}}{2}
    \int_{\{b = r\}}
    \frac{b^{2-2n}}{|\nabla b|}
    \left|
        \Hess(b^{2})
        -\frac{\Delta(b^{2})}{n}\, g
    \right|^{2}.
\end{align*}

The sign of $A''(r)$ cannot be determined directly, and therefore one cannot infer monotonicity of $A'(r)$. To overcome this difficulty, Colding--Minicozzi \cite{ColdingMontonicity} introduced an auxiliary monotone quantity $Q(r)$. In contrast, in our setting the second derivative $\mA''(t)$ has a definite sign, and $\mA'(t)$ is monotone. As a result, we may work directly with $\mA'$. This reflects a fundamental difference between the Euclidean volume growth setting and the linear volume growth setting considered here.

\end{remark}

\subsection{Approximate functional $\mathcal{E}$}\label{subsec:functionalE}

In this subsection, we construct a functional $\mE$ that approximates $\mA'$ at infinity, with the goal of studying the decay of $\mA'$. In \cite{ColdingMinicozziUniqueness}, the authors define their approximate functional $\mathcal{R}$ as a linear combination of two functionals: one modeled on the monotone quantity $A(r)$ (see Remark~\ref{rmk:CM'sA}) and other one the weighted Einstein--Hilbert functional. The need for such a linear combination stems from the fact that the cross section of a Ricci-flat cone is an Einstein manifold with positive Ricci curvature. 

In the notation of \cite{ColdingMinicozziUniqueness}, the limiting Einstein metric $b_\infty^{-2} g_0$, together with the corresponding weight $b_\infty$, is not a critical point of any single functional under consideration due to this positivity of the Ricci curvature. Consequently, an appropriate linear combination is required to obtain a functional so that $(b_\infty^{-2} g_0,b_\infty)$ is a critical point.

In our setting, a substantial simplification occurs because the cross section of a Ricci flat cylinder is an Einstein manifold with vanishing Ricci curvature. As a result, it suffices to define the approximate functional $\mE$ simply as the weighted Einstein--Hilbert functional
\begin{align}\label{analogue-of-R}
   \mE(g,w) \defeq \int_N R_g\, w\, dV_g,
\end{align}
acting on a pair $(g,w)$ consisting of a Riemannian metric $g$ and a positive weight function $w$. Here, $R_g$ denotes the scalar curvature of $g$. In our setting, we will readily verify that the limiting pair $(g_N,1)$ of the metric $g_N$ on the cross section $N$ and the constant weight $1$ is a critical point of $\mE$.

We now define the space $\mc G_1$ on which the functional $\mE$ acts.
Recall that $g_N$ is a fixed Ricci-flat metric on the $(n-1)$-dimensional manifold $N$.
Let $\mc G$ denote the space of pairs $(g,w)$ consisting of a $C^{2,\beta}$ Riemannian metric $g$
and a positive $C^{2,\beta}$ function $w$.
We impose a constant weighted volume constraint by restricting to the subspace
\begin{equation}\label{eq:G1}
\mc G_1
= \left\{ (g,w)\in \mc G \;\middle|\;
\int_N w\, dV_g = \vol_{g_N}(N) \right\}.
\end{equation}

Since $|\nabla u|\to 1$ along the pointed Gromov--Hausdorff convergent sequence
\eqref{eq:convergent-subsequence-GH-sense}, we have $(g_N,1)\in \mc G_1$.
Denote by $g_t$ the induced metric on the level set $\{u=t\}$.
By \eqref{eq:constant-weighted-volume}, it follows that $(g_t,|\nabla u|)\in \mc G_1$
whenever $t$ is sufficiently close to $t_i$ for $i$ large, in which case $\{u=t\}$
is diffeomorphic to $N$.

The tangent space $T\mc G$ consists of pairs $(h,v)$, where $h$ is a symmetric
$2$-tensor and $v$ is a function, interpreted as the infinitesimal variation along the path
\begin{equation}\label{eq:path}
(g+th,\; w e^{tv}).
\end{equation}
We equip $T\mc G$ with the natural inner product
\begin{equation}\label{eq:inner}
\langle (h_1,v_1),(h_2,v_2)\rangle_{(g,w)}
=\int_N \bigl( \langle h_1,h_2\rangle_g + v_1 v_2 \bigr)\, w\, dV_g .
\end{equation}

\begin{lemma}
A variation $(h,v)$ is tangent to $\mc G_1$ at $(g,w)$ if and only if
\begin{equation}
\int_N \left(\tfrac{1}{2}\tr(h)+v\right) w\, dV_g = 0.
\end{equation}
\end{lemma}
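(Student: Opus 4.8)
The plan is to compute the first-order variation of the constraint functional $(g,w)\mapsto \int_N w\,dV_g$ along the path \eqref{eq:path} and set it equal to zero. Since $\mc G_1$ is the level set of this functional, a variation $(h,v)\in T\mc G$ is tangent to $\mc G_1$ at $(g,w)$ precisely when this derivative vanishes.

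\begin{proof}
Let $(h,v)\in T\mc G$ and consider the path $t\mapsto (g_t,w_t)\defeq(g+th,\,we^{tv})$ in $\mc G$. By definition, $(h,v)$ is tangent to $\mc G_1$ at $(g,w)$ if and only if
\[
\frac{d}{dt}\Big|_{t=0}\int_N w_t\, dV_{g_t}=0.
\]
We compute the two contributions separately. First, $\frac{d}{dt}\big|_{t=0} w_t = \frac{d}{dt}\big|_{t=0}\bigl(we^{tv}\bigr)=wv$. Second, the standard variation formula for the Riemannian volume form under $g_t=g+th$ gives
\[
\frac{d}{dt}\Big|_{t=0} dV_{g_t}=\tfrac{1}{2}\tr_g(h)\, dV_g,
\]
where $\tr_g(h)=g^{ij}h_{ij}$. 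Combining these via the product rule,
\[
\frac{d}{dt}\Big|_{t=0}\int_N w_t\, dV_{g_t}
=\int_N \Bigl( wv + w\,\tfrac{1}{2}\tr_g(h) \Bigr) dV_g
=\int_N \Bigl(\tfrac{1}{2}\tr_g(h)+v\Bigr) w\, dV_g.
\]
Hence $(h,v)$ is tangent to $\mc G_1$ at $(g,w)$ if and only if $\int_N \bigl(\tfrac{1}{2}\tr(h)+v\bigr) w\, dV_g=0$, as claimed.
\end{proof}

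This argument is entirely routine; there is no real obstacle. The only point that requires a modicum of care is the derivation (or citation) of the first variation of the volume form $\frac{d}{dt}\big|_{t=0}dV_{g+th}=\tfrac12\tr_g(h)\,dV_g$, which follows from Jacobi's formula $\frac{d}{dt}\log\det(g+th)\big|_{t=0}=\tr(g^{-1}h)$ applied to $dV_{g_t}=\sqrt{\det g_t}\,dx$; this is completely standard and could simply be quoted. One should also note for clarity that the inner product \eqref{eq:inner} is not actually used in the statement or its proof — it becomes relevant only later, when one identifies the orthogonal complement of this tangent hyperplane in order to define the constrained gradient $\nabla_1\mE$ appearing in \ref{R3} and \ref{R4} — so it is harmless that it appears just before the lemma.
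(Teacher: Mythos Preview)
Your proof is correct and follows exactly the same approach as the paper: both differentiate the weighted volume density $(we^{tv})\,dV_{g+th}$ at $t=0$ to obtain $\bigl(\tfrac12\tr(h)+v\bigr)w\,dV_g$, and then observe that tangency to the level set $\mc G_1$ is equivalent to the vanishing of the integral. The paper's proof is a one-line version of yours.
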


\begin{proof}
This follows immediately by differentiating
\[
\bigl( (w e^{tv})\, dV_{g+th} \bigr)' 
= \left(\tfrac{1}{2}\tr(h) + v\right) w\, dV_g .
\]
\end{proof}


\section{Properties of $\mE$}\label{sec:verificationR1-R5}
In this section, we compute the first and second variations of the functional $\mE$ and verify the properties~\ref{R1}--\ref{R5} stated earlier.

\subsection{Verifying \ref{R1}--\ref{R2}: the first variation}

\begin{proposition}[First variation]\label{first-variation}
Given one-parameter families $g+th$ and $w e^{tv}$, we have
\begin{align}
\mE'(g,w)\big|_{(h,v)}
&= \int_N \Bigl\{
-\langle \Ric_g,h\rangle
+ \left\langle h, \frac{\Hess w}{w}\right\rangle
- \tr(h)\frac{\Delta w}{w}
+ R_g\!\left(\tfrac{1}{2}\tr(h)+v\right)
\Bigr\} w\, dV_g .
\label{eq:Bprime}
\end{align}
\end{proposition}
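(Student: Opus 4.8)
The plan is to differentiate $\mE(g+th,\,we^{tv})$ at $t=0$ and reduce everything to the classical first-variation formula for scalar curvature together with two integrations by parts on the closed manifold $N$. Writing $\delta$ for $\frac{d}{dt}\big|_{t=0}$ and applying the product rule to the integrand $R_g\, w\, dV_g$, I would first split
\[
\mE'(g,w)\big|_{(h,v)} = \int_N (\delta R_g)\, w\, dV_g \;+\; \int_N R_g\,\delta\!\bigl(w\, dV_g\bigr).
\]
The second term is immediate: from $\delta\bigl(dV_{g+th}\bigr) = \tfrac12\tr(h)\,dV_g$ and $\delta(we^{tv}) = wv$ one gets $\delta(w\,dV_g) = \bigl(\tfrac12\tr(h)+v\bigr)w\,dV_g$, which produces the term $R_g\bigl(\tfrac12\tr(h)+v\bigr)$ in \eqref{eq:Bprime}.

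For the first term I would invoke the standard linearization of scalar curvature,
\[
\delta R_g \;=\; -\langle \Ric_g, h\rangle \;-\; \Delta(\tr h) \;+\; \divsymb\divsymb h,
\]
where $\divsymb\divsymb h = \nabla^i\nabla^j h_{ij}$ and $\Delta = \tr\Hess$ is the sign convention used elsewhere in the paper. Multiplying by $w$ and integrating over $N$, I would transfer the derivatives off $h$ and onto $w$: self-adjointness of $\Delta$ on the closed manifold $N$ gives $\int_N (\Delta\tr h)\,w\,dV_g = \int_N (\tr h)\,\Delta w\,dV_g$, and integrating by parts twice gives $\int_N (\divsymb\divsymb h)\,w\,dV_g = \int_N \langle h, \Hess w\rangle\,dV_g$. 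No boundary terms appear, precisely because $N$ is closed. Assembling the three resulting terms and factoring out $w\,dV_g$ (writing $\Delta w = \tfrac{\Delta w}{w}\,w$ and $\Hess w = \tfrac{\Hess w}{w}\,w$) reproduces exactly the bracketed expression in \eqref{eq:Bprime}; adding the second-term contribution completes the identity.

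Since each ingredient is standard, I do not expect a genuine obstacle; the only points requiring care are matching the sign conventions in the scalar-curvature variation formula and in $\divsymb$, $\Delta$ to those fixed in the paper, and checking that the two integrations by parts are boundary-term free, which is automatic by compactness of $N$. As an immediate corollary I would record properties~\ref{R1}--\ref{R2}: at $(g_N,1)$ one has $\Ric_{g_N}=0$, $R_{g_N}=0$ and $w\equiv 1$, so \eqref{eq:Bprime} yields $\mE(g_N,1)=0$ and $\mE'(g_N,1)\big|_{(h,v)}=0$ for every variation $(h,v)$; in particular $(g_N,1)$ is a critical point of $\mE$, \emph{a fortiori} of its restriction to $\mc G_1$.
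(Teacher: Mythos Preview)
Your computation is correct and is exactly the standard derivation underlying the result: linearize $R_g$, integrate the $\Delta(\tr h)$ and $\divsymb\divsymb h$ terms by parts against $w$ on the closed manifold $N$, and handle the variation of $w\,dV_g$ directly. The paper's own proof simply cites \cite{ColdingMinicozziUniqueness}*{Proposition~3.9} for this identity, so your self-contained argument follows the same route but spells out what the citation encapsulates; the corollary you append verifying \ref{R1}--\ref{R2} also matches what the paper records separately immediately afterward.
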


\begin{proof}
The formula follows directly from \cite{ColdingMinicozziUniqueness}*{Proposition~3.9}.
\end{proof}

To compute the gradient of $\mE$, we express the first variation in terms of inner products with respect to a fixed background metric $\tilde g$. We use the following change-of-metric formula for pairings of symmetric $2$-tensors.

\begin{lemma}[\cite{ColdingMinicozziUniqueness}*{Lemma~3.27}]\label{lem:2.27}
Let $h$ and $J$ be symmetric $2$-tensors, and let $g$ and $\tilde g$ be Riemannian metrics. Then
\begin{equation}\label{eq:227-pairing}
\langle h, J\rangle_{g}
= \langle h, \Psi(J)\rangle_{\tilde g},
\end{equation}
where $\Psi$ is defined by
\begin{equation}\label{eq:227-Psi}
[\Psi(J)]_{ij}
= \tilde g_{ik}\, g^{kn}\, J_{nm}\, g^{m\ell}\, \tilde g_{\ell j}.
\end{equation}
If $g=\tilde g + t h$, then
\begin{equation}\label{eq:227-variation}
\frac{d}{dt}\Big|_{t=0} \Psi(J)_{ij}
= J'_{ij}
- h_{ip}\tilde g^{pn} J_{nj}
- J_{im}\tilde g^{mp} h_{pj}.
\end{equation}
\end{lemma}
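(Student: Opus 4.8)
The plan is to treat this as a pointwise statement in linear algebra at each point of $N$, so one may fix a point and a basis and regard $g,\tilde g,h,J$ as symmetric matrices with $g,\tilde g$ positive definite. I would first establish the pairing identity \eqref{eq:227-pairing} and then obtain the variation formula \eqref{eq:227-variation} by differentiating the explicit three-factor expression \eqref{eq:227-Psi}. Neither step involves anything beyond index bookkeeping, but I would organize it invariantly to keep it short.

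\textbf{The pairing identity.} The cleanest route, which I would write as the main argument, is to pass to endomorphisms. Let $A\defeq\tilde g^{-1}g$ be the transition operator, and for a symmetric $2$-tensor $S$ let $S^{\sharp}\defeq\tilde g^{-1}S$ denote its $\tilde g$-raised endomorphism. A short computation shows $[\Psi(J)]^{\sharp}=A^{-1}J^{\sharp}A^{-1}$, while $\langle h,J\rangle_{g}=\tr\!\big((g^{-1}h)(g^{-1}J)\big)=\tr\!\big(A^{-1}h^{\sharp}A^{-1}J^{\sharp}\big)$ and $\langle h,\Psi(J)\rangle_{\tilde g}=\tr\!\big(h^{\sharp}[\Psi(J)]^{\sharp}\big)=\tr\!\big(h^{\sharp}A^{-1}J^{\sharp}A^{-1}\big)$; then \eqref{eq:227-pairing} is just cyclicity of the trace. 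For readers who prefer indices, the same computation reads: substitute \eqref{eq:227-Psi} into $\langle h,\Psi(J)\rangle_{\tilde g}=h_{ij}\,\tilde g^{ia}\tilde g^{jb}[\Psi(J)]_{ab}$ and collapse the pairs $\tilde g^{ia}\tilde g_{ak}$ and $\tilde g^{jb}\tilde g_{b\ell}$ to Kronecker deltas, leaving $h_{ij}\,g^{ik}g^{j\ell}J_{k\ell}=\langle h,J\rangle_{g}$.

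\textbf{The variation.} Setting $g=\tilde g+th$ gives $A=\mathrm{Id}+t\,h^{\sharp}$, hence $\tfrac{d}{dt}\big|_{t=0}A^{-1}=-h^{\sharp}$ and $A|_{t=0}=\mathrm{Id}$. Differentiating $[\Psi(J)]^{\sharp}=A^{-1}J^{\sharp}A^{-1}$ at $t=0$, the product rule yields the three terms $-h^{\sharp}J^{\sharp}$, $(J')^{\sharp}$, $-J^{\sharp}h^{\sharp}$; lowering back with $\tilde g$ and writing out the indices recovers exactly $J'_{ij}-h_{ip}\tilde g^{pn}J_{nj}-J_{im}\tilde g^{mp}h_{pj}$, which is \eqref{eq:227-variation}. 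Equivalently one differentiates \eqref{eq:227-Psi} directly, using $\tfrac{d}{dt}\big|_{t=0}g^{ab}=-\tilde g^{ap}h_{pq}\tilde g^{qb}$ on the two inner inverse-metric factors and picking up $J'$ from the middle factor.

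\textbf{Main obstacle.} I do not anticipate any genuine obstacle here; the lemma is entirely formal. The only subtlety worth flagging is the convention that $\langle\cdot,\cdot\rangle_{g}$ on symmetric $2$-tensors denotes the contraction with \emph{two} copies of $g^{-1}$ --- this is precisely what forces the two inverse metrics in the definition of $\Psi$ in \eqref{eq:227-Psi} and, upon differentiation, the two symmetric correction terms in \eqref{eq:227-variation}. Since the statement is quoted from \cite{ColdingMinicozziUniqueness}*{Lemma~3.27}, one may simply cite it; I would nonetheless include the two-line endomorphism computation above for self-containedness, as it makes both \eqref{eq:227-pairing} and \eqref{eq:227-variation} transparent.
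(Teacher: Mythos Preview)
Your proposal is correct. Note, however, that the paper does not actually prove this lemma: it is stated with the citation \cite{ColdingMinicozziUniqueness}*{Lemma~3.27} and used as a black box, so there is no proof in the paper to compare against. Your endomorphism argument (rewriting everything in terms of $A=\tilde g^{-1}g$ and using cyclicity of the trace) is a clean and self-contained way to supply what the paper omits, and your observation that the two correction terms in \eqref{eq:227-variation} come from differentiating the two copies of $g^{-1}$ is exactly the point.
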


As a consequence, we obtain the following expression for the gradient of $\mE$.

\begin{corollary}
The gradient of $\mE$ at $(g,w)$ is given by
\begin{equation}
\nabla \mE
=
\left(
\left(\frac{R_g}{2}-\frac{\Delta w}{w}\right)\Psi(g)
+ \Psi\!\left(-\Ric_g+\frac{\Hess w}{w}\right),
\;
R_g
\right) w .
\end{equation}
\end{corollary}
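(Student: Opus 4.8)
The corollary asks to convert the coordinate-expression for the first variation $\mE'(g,w)|_{(h,v)}$ from Proposition~\ref{first-variation} into an inner-product pairing $\langle \nabla \mE, (h,v)\rangle_{(g,w)}$ with respect to the weighted inner product \eqref{eq:inner}. The plan is to match each term in \eqref{eq:Bprime} against the structure $\int_N(\langle (\nabla\mE)_1, h\rangle_{\tilde g} + (\nabla\mE)_2\, v)\, w\, dV_g$, where the first slot of the gradient must be expressed via the background metric $\tilde g$ rather than $g$ itself.

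First I would read off the $v$-component. The only term in \eqref{eq:Bprime} containing $v$ is $R_g\, v\, w\, dV_g$, so immediately the function-part of the gradient is $R_g$, matching the second slot of the claimed formula. Next I would collect the terms in \eqref{eq:Bprime} that pair against $h$ with respect to $g$: these are $-\langle\Ric_g,h\rangle_g$, $\langle h,\Hess w/w\rangle_g$, $-\tr(h)\,\Delta w/w$, and $\tfrac12 R_g\,\tr(h)$. The trace terms require rewriting: since $\tr_g(h) = \langle h, g\rangle_g$, the term $(\tfrac{R_g}{2}-\tfrac{\Delta w}{w})\tr_g(h)$ becomes $\langle h, (\tfrac{R_g}{2}-\tfrac{\Delta w}{w})\, g\rangle_g$. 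Thus the total $h$-pairing is $\langle h,\, J\rangle_g$ with $J = (\tfrac{R_g}{2}-\tfrac{\Delta w}{w})\, g + (-\Ric_g + \tfrac{\Hess w}{w})$.

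Then I would apply Lemma~\ref{lem:2.27}, specifically \eqref{eq:227-pairing}, to rewrite $\langle h, J\rangle_g = \langle h, \Psi(J)\rangle_{\tilde g}$, and use linearity of $\Psi$ to split $\Psi(J) = (\tfrac{R_g}{2}-\tfrac{\Delta w}{w})\,\Psi(g) + \Psi(-\Ric_g + \tfrac{\Hess w}{w})$, which is exactly the first slot in the stated gradient. Assembling the two slots and factoring out the weight $w$ from the measure $w\, dV_g$ (so that the pairing is with respect to the inner product \eqref{eq:inner}) gives the claimed expression for $\nabla\mE$.

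There is essentially no serious obstacle here; the content is bookkeeping. The one point deserving care is the treatment of the trace terms and the consistent use of $\tilde g$ versus $g$: the gradient is defined with respect to the weighted inner product \eqref{eq:inner}, whose symmetric-tensor part uses $g$ itself, but the statement phrases the answer through $\Psi$ and a fixed background $\tilde g$ so that it is in a form convenient for the later second-variation computation. I would make explicit that the identity \eqref{eq:227-pairing} is precisely what licenses passing from the $g$-pairing dictated by \eqref{eq:inner} to the $\tilde g$-pairing appearing in the formula, and note that setting $\tilde g = g$ recovers the un-transformed gradient. No constraint from $\mc G_1$ enters at this stage — this is the unconstrained gradient on $\mc G$, and the restricted gradient $\nabla_1\mE$ of property~\ref{R3} is obtained afterwards by projecting off the direction normal to the weighted-volume constraint.
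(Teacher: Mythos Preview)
Your approach is correct and is exactly how the paper intends the corollary to follow from Proposition~\ref{first-variation} and Lemma~\ref{lem:2.27}: read off the $v$-slot as $R_g$, rewrite the trace terms via $\tr_g(h)=\langle g,h\rangle_g$, and apply $\Psi$ to pass from $g$-pairings to $\tilde g$-pairings. The paper gives no proof beyond stating the corollary as a consequence of these two inputs.

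One small wobble: your account of the overall factor $w$ is inconsistent. You say the gradient is taken with respect to \eqref{eq:inner}, which already carries the weight $w\,dV_g$, and then speak of ``factoring out the weight $w$''. If the inner product truly had measure $w\,dV_g$, the gradient would be $(J,R_g)$ with no external $w$; the presence of the external $w$ in the claimed formula signals that the pairing being used for this gradient is $\int_N(\langle\cdot,\cdot\rangle_{\tilde g}+\cdot\,\cdot)\,dV_g$, i.e.\ unweighted volume and background metric $\tilde g$, which is precisely why the paper invokes Lemma~\ref{lem:2.27} just before the corollary. This does not affect your computation, only the sentence explaining where the $w$ comes from.
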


\begin{corollary}
The gradient of the weighted volume functional
\[
\mV(g,w)\defeq \int_N w\, dV_g
\]
is
\[
\nabla \mV = \left(\tfrac{1}{2}\Psi(g),\,1\right) w .
\]
\end{corollary}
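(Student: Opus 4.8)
The plan is to obtain $\nabla\mV$ by the same two-step recipe used for $\nabla\mE$ in the preceding corollary: compute the first variation of $\mV$, and then rewrite it as a pairing against the fixed background metric $\tilde g$ via the operator $\Psi$.

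First I would record the first variation. Along the path \eqref{eq:path}, i.e.\ $(g+th,\,we^{tv})$, the first variation of the weighted volume element computed in Section~\ref{subsec:functionalE} gives
\[
\frac{d}{dt}\Big|_{t=0}\bigl((we^{tv})\,dV_{g+th}\bigr)=\left(\tfrac12\tr(h)+v\right)w\,dV_g ,
\]
so that $\mV'(g,w)\big|_{(h,v)}=\int_N\bigl(\tfrac12\tr(h)+v\bigr)\,w\,dV_g$.

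Next I would convert the trace term into a $\tilde g$-pairing. Since $\tr(h)=\langle g,h\rangle_g$, applying Lemma~\ref{lem:2.27} with $J=g$ (the identity \eqref{eq:227-pairing}) gives $\tr(h)=\langle \Psi(g),h\rangle_{\tilde g}$. Substituting and moving the weight $w$ inside the pairings,
\[
\mV'(g,w)\big|_{(h,v)}=\int_N\Bigl(\bigl\langle \tfrac12\, w\,\Psi(g),\,h\bigr\rangle_{\tilde g}+w\,v\Bigr)\,dV_g .
\]
Matching this against the inner product under which $\nabla\mE$ was expressed in the preceding corollary identifies the two slots of the gradient, giving $\nabla\mV=\bigl(\tfrac12\, w\,\Psi(g),\,w\bigr)=\bigl(\tfrac12\,\Psi(g),\,1\bigr)w$, as claimed.

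I do not expect a genuine obstacle here: the substantive input — the first variation of the weighted volume form — is already in hand, and everything else is the same $\Psi$-and-weight bookkeeping used for $\nabla\mE$. The only point to keep straight is where the weight $w$ sits: it appears in the integrand of $\mV'$ but not in the fixed inner product defining the gradient, so it is absorbed into \emph{both} components of $\nabla\mV$, and consistency with the $\nabla\mE$ computation is all that is needed. This formula for $\nabla\mV$ is precisely what is required in order to subtract off the component of $\nabla\mE$ normal to the constraint surface $\mc G_1$ and thereby define the constrained gradient $\nabla_1\mE$ entering \ref{R3}.
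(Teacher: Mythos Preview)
Your proof is correct and follows exactly the approach the paper intends: the corollary is stated without proof, but it is clearly meant to be derived from the first variation of the weighted volume element (already computed just before the lemma on $T\mc G_1$) together with Lemma~\ref{lem:2.27}, precisely as you do. Your bookkeeping of the weight $w$ and the use of $\tr(h)=\langle g,h\rangle_g=\langle \Psi(g),h\rangle_{\tilde g}$ match the parallel computation for $\nabla\mE$ in the preceding corollary.
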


\begin{corollary}
The pair $(g_N,1)$ is a critical point of the functional $\mE$ restricted to $\mG_1$, and moreover
\[
\mE(g_N,1)=\lim_{t\to\infty}\mA'(t)=0.
\]
In particular, properties~\ref{R1} and~\ref{R2} hold.
\end{corollary}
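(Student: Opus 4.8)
The plan is as follows. First, evaluating the functional is immediate: since $g_N$ is Ricci-flat, its scalar curvature $R_{g_N}$ vanishes identically, so $\mE(g_N,1)=\int_N R_{g_N}\cdot 1\,dV_{g_N}=0$. Combined with the limit $\lim_{t\to\infty}\mA'(t)=0$ already established in \eqref{eq:A'-limit} of Proposition~\ref{prop:A'-Monotone}, this produces the full chain of equalities in the statement, and in particular property~\ref{R1}.

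For criticality (property~\ref{R2}), I would substitute the base point $(g_N,1)$ directly into the first-variation formula \eqref{eq:Bprime} of Proposition~\ref{first-variation}. At $(g_N,1)$ one has $w\equiv 1$, hence $\Hess w=0$ and $\Delta w=0$; moreover $\Ric_{g_N}=0$ and $R_{g_N}=0$ by Ricci-flatness. Every term in the integrand of \eqref{eq:Bprime} therefore vanishes, so $\mE'(g_N,1)\big|_{(h,v)}=0$ for \emph{every} variation $(h,v)\in T\mc G$; equivalently, the gradient $\nabla\mE$ computed in the corollary above vanishes at $(g_N,1)$. Since a fortiori the first variation vanishes on the subspace of variations tangent to $\mc G_1$, and since $(g_N,1)\in\mc G_1$ (recall that $\int_N 1\,dV_{g_N}=\vol_{g_N}(N)$, as noted after \eqref{eq:G1}), the pair $(g_N,1)$ is a critical point of $\mE$ restricted to $\mc G_1$ — in fact with trivial Lagrange multiplier. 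This is precisely property~\ref{R2}.

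There is essentially no obstacle here: the entire content is that the Ricci-flatness of the cross section, in contrast to the positive-Einstein cross sections appearing in \cite{ColdingMinicozziUniqueness}, forces all curvature terms to vanish at the base point, while the constant weight $w\equiv 1$ kills all derivative-of-weight terms; this is exactly the structural simplification highlighted in Section~\ref{subsec:functionalE}. The only point worth flagging explicitly is the routine bookkeeping that unconstrained criticality implies criticality for the restriction to $\mc G_1$, which is automatic once one checks that $(g_N,1)$ indeed lies in $\mc G_1$.
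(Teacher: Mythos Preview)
Your proposal is correct and follows essentially the same route as the paper: substitute $(g_N,1)$ into the first-variation formula \eqref{eq:Bprime}, observe that Ricci-flatness kills the curvature terms and the constant weight kills the $\Hess w$, $\Delta w$ terms, and invoke \eqref{eq:A'-limit} for the limit statement. The paper's proof is terser (it writes the simplified integrand directly and does not spell out the $\mE(g_N,1)=0$ computation separately), but the argument is identical.
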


\begin{proof}
A direct computation of the first variation at $(g_N,1)$ yields
\begin{align*}
\mE'(g_N+th,1+tv)\big|_{t=0}
&= \int_N
\Bigl\{
-\langle \Ric_{g_N},h\rangle
+ R_{g_N}\!\left(\tfrac{1}{2}\tr_{g_N}(h)+v\right)
\Bigr\} dV_{g_N} \\
&= 0,
\end{align*}
since $(N,g_N)$ is Ricci-flat. This shows that $(g_N,1)$ is a critical point of $\mE$.
\end{proof}

\subsection{Verifying \ref{R4}-\ref{R5}}
In this subsection, we verify properties~\ref{R4} and~\ref{R5}, which show that $\mE$ approximates $\mA'$ up to first order.

\begin{proposition}\label{gradient-mE}
Under the assumptions of Theorem~\ref{interior-estimate}, constant $C=C(g_N)>0$ independent of $i$, such that for all $t \in [t_i, t_i+L]$,
\begin{align}
    |\nabla_1 \mE(g_t,|\nabla u|)|^2
    \le C \int_{T_{[t-1,\, t+3]}} |\Hess u|^2 \, dV_g
\end{align}
In particular, this verifies property~\ref{R4}.
\end{proposition}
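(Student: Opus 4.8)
The goal is to bound the restricted gradient $\nabla_1\mE(g_t,|\nabla u|)$ by the $L^2$-norm of $\Hess u$ on the enlarged tube $T_{[t-1,t+3]}$. The natural strategy is to first recall the explicit formula for the gradient $\nabla\mE$ (from the corollary above) in terms of $\Ric_{g_t}$, $\Hess_{g_t} w$, $\Delta_{g_t}w$, $R_{g_t}$, with $w=|\nabla u|$, and then to project it orthogonally onto $T\mc G_1$ by subtracting the appropriate multiple of $\nabla\mV=(\tfrac12\Psi(g),1)w$. Since the projection only decreases the norm (and the Lagrange multiplier is controlled by the Cauchy--Schwarz inequality against $\nabla\mV$, whose norm is bounded on the relevant region), it suffices to bound the unrestricted gradient $|\nabla\mE(g_t,|\nabla u|)|$.

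\textbf{Key steps.} First I would express each intrinsic quantity of the level set $\{u=t\}$ — namely $\Ric_{g_t}$, $R_{g_t}$, the second fundamental form, and the intrinsic Hessian and Laplacian of $w=|\nabla u|$ — in terms of ambient quantities on $M$ via the Gauss and Codazzi equations. Because $(M,g)$ is Ricci-flat, the Gauss equation gives $R_{g_t}$ in terms of the second fundamental form $\mathrm{II}$ of $\{u=t\}$ alone (the ambient curvature contributions being controlled, and in the Ricci-flat case the scalar terms drop out favorably), and $\mathrm{II} = \Hess u|_{\{u=t\}}/|\nabla u|$ up to the sign, so $R_{g_t}$ is pointwise $O(|\Hess u|^2)$. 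Similarly $\Ric_{g_t}$ is controlled by $\mathrm{II}$, its trace, and ambient curvature; the ambient curvature is uniformly bounded by Anderson's theorem under \eqref{eq:closeness-to-tube}, but one must be slightly careful — the point is that $\nabla\mE$ at a \emph{genuine} critical configuration $(g_N,1)$ vanishes, so the terms in $\nabla\mE(g_t,|\nabla u|)$ that do not manifestly carry a factor of $\Hess u$ should be reorganized so that their $(g_N,1)$-value is subtracted off, turning them into differences that are themselves $O(|\Hess u|)$ pointwise via the interior estimate of Theorem~\ref{interior-estimate}. For the weight, $\nabla^{g_t} w$ and $\Hess_{g_t} w$ are governed by the tangential derivatives of $|\nabla u|$, which by $\nabla|\nabla u|^2 = 2\Hess u(\nabla u,\cdot)$ and the $C^1$ bound from Theorem~\ref{interior-estimate} are all controlled pointwise by $\left(\int_{T_{[t-1,t+3]}}|\Hess u|^2\right)^{1/2}$. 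Collecting these, the integrand of $|\nabla\mE(g_t,|\nabla u|)|^2$ is pointwise bounded by $C(g_N)$ times $\int_{T_{[t-1,t+3]}}|\Hess u|^2$, and integrating over the compact level set $\{u=t\}$ (whose area is uniformly bounded) yields the claim. Finally I would observe that the projection onto $\mc G_1$ and the passage from $\nabla\mE$ to $\nabla_1\mE$ only improves the estimate.

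\textbf{Main obstacle.} The delicate point is the $\Ric_{g_t}$ term: unlike $R_{g_t}$ and the Hessian-of-weight terms, which are visibly quadratic or linear in $\Hess u$, the intrinsic Ricci tensor of $\{u=t\}$ picks up the \emph{ambient} sectional curvatures in directions tangent to the level set, and these need not be small — only bounded — under \eqref{eq:closeness-to-tube}. The resolution is that these ambient-curvature contributions are precisely the ones that, together with the corresponding terms for the limit $(g_N,1)$, assemble into $\nabla_1\mE(g_N,1)=0$; so after subtracting and using that $g_t\to g_N$ and $|\nabla u|\to 1$ in $C^{1,\alpha}$ along the tube (again Anderson plus Theorem~\ref{interior-estimate}), what remains is a difference that is $O(\|g_t-g_N\|_{C^{1}})+O(\||\nabla u|-1\|_{C^1})$, and both of these are in turn $O\big((\int_{T_{[t-1,t+3]}}|\Hess u|^2)^{1/2}\big)$ by integrating the pointwise Hessian bound along the gradient flow exactly as in the proof of Proposition~\ref{prop:Bound-Phi-by-Hess}. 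Making this subtraction bookkeeping precise — i.e.\ showing that $\nabla_1\mE(g_t,|\nabla u|) = \nabla_1\mE(g_t,|\nabla u|) - \nabla_1\mE(g_N,1)$ is a genuine difference of a smooth map and then estimating its modulus of continuity — is where most of the work lies, and it parallels \cite{ColdingMinicozziUniqueness}*{Section~3} closely enough that the structure of the argument can be borrowed essentially verbatim.
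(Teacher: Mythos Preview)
You correctly identify the $\Ric_{g_t}$ term as the crux, but your proposed resolution via the difference $\nabla_1\mE(g_t,|\nabla u|)-\nabla_1\mE(g_N,1)$ has a genuine gap. The gradient $\nabla\mE$ depends on $\Ric_g$, which is second order in $g$; hence the modulus of continuity of $\nabla\mE$ is governed by $\|g_t-g_N\|_{C^2}$, not $\|g_t-g_N\|_{C^1}$ as you claim. Theorem~\ref{interior-estimate} supplies only a $C^1$ bound on $\Hess u$, and the flow argument of Proposition~\ref{prop:Bound-Phi-by-Hess} therefore yields at best $\|g_t-g_s\|_{C^1}\le C h^{1/2}$ between two level sets --- one derivative short. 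Even if you bootstrap Theorem~\ref{interior-estimate} to $C^2$, you would still need $\|g_{t_i}-g_N\|_{C^2}\le C h^{1/2}$, and Anderson's theorem only gives closeness of order $\delta$ (the fixed Gromov--Hausdorff scale), not of order $h_{[t-1,t+3]}^{1/2}$, which must go to zero with $t$. Your remark that the ambient-curvature contributions ``assemble into $\nabla_1\mE(g_N,1)=0$'' is also not right: $\nabla_1\mE(g_N,1)$ is computed intrinsically on $N$ and sees no ambient curvature of $M$ at all.

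The paper's route avoids the continuity argument entirely. After writing $\nabla\mE = R_g\nabla\mV + \bigl(-\tfrac{\Delta w}{w}\Psi(g)+\Psi(-\Ric_g+\tfrac{\Hess w}{w}),0\bigr)w$, the second piece is bounded pointwise by $C(|\Ric_{g_t}|+|\Hess u|_{C^1})$, and the key input is Lemma~\ref{Ricci-level-set}: because $(M,g)$ is Ricci-flat, the Gauss equation gives $\Ric^T_{ii} = -\Rm_{inin} + \lambda_i H - \lambda_i^2$, so the only ambient-curvature contribution is the single \emph{radial} sectional curvature $\Rm(\nu,e_i,\nu,e_i)$. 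Since $\nu=\nabla u/|\nabla u|$ is a gradient, one can unwind the definition of $\Rm(\nabla u,e)\nabla u$ directly in terms of $\Hess u$ and $\nabla\Hess u$, yielding $|\Ric_{g_t}|\le C|\Hess u|_{C^1}$ without any comparison to $g_N$. For the remaining $R_g\nabla\mV$ term, projecting onto $T\mc G_1$ replaces $R_g$ by its oscillation $R_g - \dashint R_g|\nabla\mV|^2$, which is controlled by $\sup|\Hess u|$ (alternatively, your observation that $R_{g_t}=O(|\Hess u|^2)$ already suffices here, and the projection only helps). The missing idea in your plan is precisely this: exploit Ricci-flatness and the gradient structure of the normal to bound $\Ric_{g_t}$ directly, rather than through $g_t-g_N$.
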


\begin{proof}
Throughout the proof, we use $|\cdot|$ to denote pointwise norms, $\|\cdot\|$ for $L^2$ norms, and $\langle \cdot,\cdot\rangle$ for the $L^2$ inner product.

Recall that $\mc G_1$ is a level set of the weighted volume functional $\mV$. Hence, the projection $\nabla_1 \mE$ of the gradient $\nabla \mE$ onto $T\mc G_1$ is given by
\[
    \nabla_1 \mE
    = \nabla \mE
    - \left\langle \nabla \mE, \nabla \mV \right\rangle
      \frac{\nabla \mV}{\|\nabla \mV\|^2}.
\]
We have already computed
\[
    \nabla \mV = \left(\tfrac{1}{2}\Psi(g),\,1\right) w .
\]
Consequently,
\begin{align*}
    \nabla \mE
    &=
    \left(
        \left(\frac{R_g}{2}-\frac{\Delta w}{w}\right)\Psi(g)
        + \Psi\!\left(-\Ric_g+\frac{\Hess w}{w}\right),
        \; R_g
    \right) w \\
    &=
    R_g \left(\tfrac{1}{2}\Psi(g),1\right) w
    +
    \left(
        -\frac{\Delta w}{w}\Psi(g)
        + \Psi\!\left(-\Ric_g+\frac{\Hess w}{w}\right),
        \; 0
    \right) w \\
    &=
    R_g \nabla \mV
    +
    \left(
        -\frac{\Delta w}{w}\Psi(g)
        + \Psi\!\left(-\Ric_g+\frac{\Hess w}{w}\right),
        \; 0
    \right) w .
\end{align*}

We now set $g=g_t$, the induced metric on the level set $\{u=t\}$, and $w=|\nabla u|$. By the pointed Gromov--Hausdorff convergence and the smoothness of the asymptotic limit, we may assume that $g_t$ is uniformly close to the background metric $g_N$ and that $w$ is uniformly close to $1$ for $i$ sufficiently large. Since $\Psi$ is a bounded operator and $w$ is uniformly bounded, Lemma~\ref{Ricci-level-set} yields
\[
    \left|
        \left(
            -\frac{\Delta w}{w}\Psi(g)
            + \Psi\!\left(-\Ric_g+\frac{\Hess w}{w}\right),
            \; 0
        \right) w
    \right|
    \le C(g_N)\bigl(|\Ric_g| + |\Hess u|\bigr)
    \le C\, |\Hess u|_{C^1(\{u=t\})}.
\]

To estimate the projection of the term $R_g \nabla \mV$, observe that
\begin{align*}
    R_g \nabla \mV
    -
    \left\langle R_g \nabla \mV, \nabla \mV \right\rangle
    \frac{\nabla \mV}{\|\nabla \mV\|^2}
    =
    \left(
        R_g
        -
        \left\langle
            R_g \frac{\nabla \mV}{\|\nabla \mV\|},
            \frac{\nabla \mV}{\|\nabla \mV\|}
        \right\rangle
    \right)\nabla \mV .
\end{align*}
Since $\nabla \mV$ is uniformly bounded, we obtain
\[
    \left|
        R_g
        -
        \frac{\int_N R_g |\nabla \mV|^2}{\int_N |\nabla \mV|^2}
    \right|
    \le C\bigl(\sup R_g - \inf R_g\bigr)
    \le C\, |\Hess u|.
\]
Combining these estimates with the interior estimate from Theorem~\ref{interior-estimate} yields the desired bound.
\end{proof}

To proceed, we recall several geometric quantities associated with the level sets of $u$.
First, note that the second fundamental form $\mr{II}$ of the level sets of $u$ is given by
\[
    \mr{II}(e_i,e_j) \defeq \langle \nabla_{e_i}\nu, e_j\rangle,
\]
where $\{e_i\}_{i=1}^{n-1}$ is a local tangential frame and
\(
    \nu = \frac{\nabla u}{|\nabla u|}
\)
is the unit normal vector field. A direct computation shows that
\[
    \mr{II}(e_i,e_j)
    = \frac{\Hess u(e_i,e_j)}{|\nabla u|}.
\]
Consequently, the mean curvature of the level set $\{u=t\}$ is
\begin{equation}\label{eq:mean-curvature}
    H \defeq \mr{II}_{ij} g_N^{ij}
    = \frac{1}{|\nabla u|}\bigl(\Delta_M u - \Hess u(\nu,\nu)\bigr)
    = -\frac{\Hess u(\nu,\nu)}{|\nabla u|},
\end{equation}
where we used the harmonicity of $u$.

Since $(M,g)$ is Ricci-flat, the Gauss equation implies that the scalar curvature of the
induced metric $g_t$ on $\{u=t\}$ satisfies
\begin{equation}\label{eq:scalar-curvature}
    R_{g_t}
    = H^2 - |\mr{II}|^2
    = \frac{1}{|\nabla u|^2}
      \left(
          \bigl(\Hess u(\nu,\nu)\bigr)^2
          - |\Hess u(e_i,e_j)|^2
      \right).
\end{equation}

\begin{lemma}\label{Ricci-level-set}
Under the assumptions of Theorem~\ref{interior-estimate}, there exist constant $C=C(g_N)>0$ independent of $i$, such that for all $t \in [t_i, t_i+L]$, the Ricci curvature of the induced metric $g_t$ on the level set $\{u=t\}$
satisfies
\[
    |\Ric_{g_t}|
    \le C(g_N)\, |\Hess u|_{C^1(\{u=t\})}.
\]
\end{lemma}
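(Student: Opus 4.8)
The plan is to combine the Gauss equation for the hypersurface $\{u=t\}\subseteq M$ with the Ricci identity for the tensor $\nabla^2 u$, exploiting the Ricci-flatness of $(M,g)$ to eliminate the purely tangential part of the ambient curvature. Fix $t\in[t_i,t_i+L]$, write $\nu=\nabla u/|\nabla u|$ for the unit normal to $\{u=t\}$, and let $\mr{II}$ be the associated second fundamental form, so that $\mr{II}(e_j,e_l)=\Hess u(e_j,e_l)/|\nabla u|$ and $H=-\Hess u(\nu,\nu)/|\nabla u|$ as in \eqref{eq:mean-curvature}. Completing a tangential orthonormal frame $\{e_i\}_{i=1}^{n-1}$ of $\{u=t\}$ by $\nu$ to an orthonormal frame of $M$ and contracting the Gauss equation, I would obtain
\[
\Ric_{g_t}(e_j,e_l)=\sum_{i=1}^{n-1}R^M(e_i,e_j,e_i,e_l)+H\,\mr{II}(e_j,e_l)-\sum_{i=1}^{n-1}\mr{II}(e_i,e_j)\,\mr{II}(e_i,e_l).
\]
Since $\Ric_M\equiv 0$, completing the frame shows the first sum equals $-R^M(\nu,e_j,\nu,e_l)$, so $\Ric_{g_t}$ decomposes into a single ``normal'' ambient curvature term plus two terms that are quadratic in $\mr{II}$.

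The quadratic terms are the easy ones. Under the closeness hypothesis \eqref{eq:closeness-to-tube} the tubes are smoothly close to $[0,L]\times N$ by Anderson's theorem, as already used in the proof of Theorem~\ref{interior-estimate}; hence $|\nabla u|$ is bounded above and below by positive constants depending only on $g_N$, and $\|\Hess u\|_{C^0(\{u=t\})}$ can be made as small as desired by shrinking $\tilde\delta$. Consequently
\[
|H\,\mr{II}|+|\mr{II}|^2\le C(g_N)\,|\Hess u|^2\le C(g_N)\,\|\Hess u\|_{C^0(\{u=t\})}\,|\Hess u|\le C(g_N)\,\|\Hess u\|_{C^1(\{u=t\})}.
\]
For the normal term, I would apply the Ricci identity to the $1$-form $du$, namely $(\nabla_X\Hess u)(Y,Z)-(\nabla_Y\Hess u)(X,Z)=-R^M(X,Y,Z,\nabla u)$, and specialize $X=\nabla u$ to obtain
\[
R^M(\nabla u,Y,\nabla u,Z)=(\nabla_{\nabla u}\Hess u)(Y,Z)-(\nabla_Y\Hess u)(\nabla u,Z).
\]
Dividing by $|\nabla u|^2$ and using the lower bound on $|\nabla u|$ then gives $|R^M(\nu,\cdot,\nu,\cdot)|\le C(g_N)\,|\nabla\Hess u|\le C(g_N)\,\|\Hess u\|_{C^1(\{u=t\})}$. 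Adding this to the quadratic estimate yields $|\Ric_{g_t}|\le C(g_N)\,\|\Hess u\|_{C^1(\{u=t\})}$ pointwise on $\{u=t\}$, which is the claim (taking the supremum then gives the estimate for the $C^0$-norm of $\Ric_{g_t}$).

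The step I expect to be the crux is the treatment of the leftover ambient term $R^M(\nu,\cdot,\nu,\cdot)$: after using $\Ric_M=0$ in the Gauss equation one is \emph{not} left with something merely bounded by the uniformly controlled geometry of the limit cylinder, but with a quantity that must be tied back to the Hessian of $u$, and the only mechanism for this is the commutation identity above. This is precisely what forces the $C^1$-norm of $\Hess u$, rather than its $C^0$-norm, to appear on the right-hand side, and it is the reason the $C^1$ bound of Theorem~\ref{interior-estimate} is needed here. The remaining steps are routine manipulations of the Gauss equation together with the uniform two-sided control of $|\nabla u|$.
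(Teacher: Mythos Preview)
Your proposal is correct and follows essentially the same approach as the paper: decompose $\Ric_{g_t}$ via the Gauss equation into quadratic-in-$\mr{II}$ terms (controlled by $|\Hess u|^2$) and the normal sectional curvature $R^M(\nu,\cdot,\nu,\cdot)$, then bound the latter by $|\nabla\Hess u|$. The only cosmetic difference is that you invoke the Ricci identity $(\nabla_X\Hess u)(Y,Z)-(\nabla_Y\Hess u)(X,Z)=-R^M(X,Y,Z,\nabla u)$ directly, whereas the paper unwinds the definition of the curvature tensor applied to $\nabla u$ by hand; these are the same computation, and your packaging is arguably cleaner.
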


\begin{proof}
Let $\Rm$ and $\Rm^T$ denote the curvature tensors of $(M,g)$ and of the level set $\{u=t\}$,
respectively. Choose a local orthonormal frame $\{e_i\}_{i=1}^n$ such that
$e_n = \nu = \frac{\nabla u}{|\nabla u|}$ and $\{e_i\}_{i=1}^{n-1}$ diagonalizes the second
fundamental form $\mr{II}$; denote the corresponding principal curvatures by $\lambda_i$.

For $i \neq j$ with $i,j < n$, the Gauss--Codazzi equations give
\[
    \Rm^T_{ijij} = \Rm_{ijij} + \lambda_i \lambda_j.
\]
Summing over $j < n$ yields the Ricci curvature of the level set in the $e_i$-direction:
\[
    \Ric^T_{ii} = -\Rm_{inin} + \lambda_i H - \lambda_i^2.
\]
By \eqref{eq:mean-curvature}, it follows that on $\{u=t\}$,
\[
    |\lambda_i H - \lambda_i^2|
    \le C\, |\Hess u|^2,
\]
where we used that $|\nabla u|$ is uniformly close to $1$ for $i$ sufficiently large.

It remains to estimate the ``radial'' curvature term $\Rm_{inin}$.
Let $e$ be a tangential vector field along $\{u=t\}$ and assume $\nabla_{\nabla u} e = 0$.
By definition of the curvature tensor,
\begin{align*}
    \langle \Rm(\nabla u, e)\nabla u, e\rangle
    &=
    \langle \nabla_e \nabla_{\nabla u}\nabla u, e\rangle
    - \langle \nabla_{\nabla u}\nabla_e \nabla u, e\rangle
    + \langle \nabla_{[\nabla u,e]}\nabla u, e\rangle \\
    &=
    \nabla_e\!\left(\Hess u(\nabla u,e)\right)
    - \Hess u(\nabla u,\nabla_e e)
    - \nabla_{\nabla u}\!\left(\Hess u(e,e)\right)
    - \Hess u\bigl(\Hess u(e),e\bigr).
\end{align*}
Hence,
\[
    \bigl|\langle \Rm(\nabla u, e)\nabla u, e\rangle\bigr|
    \le C\bigl(|\nabla \Hess u| + |\Hess u|\bigr),
\]
where the constant $C$ is uniform, since $|\Hess u|$ is small for $i$ large.
This completes the proof.
\end{proof}

\begin{proposition}
Under the assumptions of Theorem~\ref{interior-estimate}, there exist constant $C=C(g_N)>0$ independent of $i$, such for all $t \in [t_i, t_i+L]$,
\[
    |\mA'(t) - \mE(g_t, |\nabla u|(t))|
    \le C\, h_{[t-1,\, t+3]}.
\]
In particular, this verifies property~\ref{R5}.
\end{proposition}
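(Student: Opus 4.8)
The plan is to prove the inequality by writing both $\mA'(t)$ and $\mE(g_t,|\nabla u|)$ as integrals over the single level set $N_t=\{u=t\}$, matching their integrands using the harmonicity of $u$, the Ricci-flatness of $(M,g)$ and the constancy of the weighted area, and finally controlling the remainder by the pointwise Hessian estimates of Theorem~\ref{interior-estimate}.

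First I would record the two level-set expressions. From the computation in Section~\ref{subsec:MonotoneA} one has $\mA'(t)=\int_{N_t}\langle\nabla|\nabla u|^2,\nu\rangle\,dA_g=2\int_{N_t}|\nabla u|\,\Hess u(\nu,\nu)\,dA_g$ with $\nu=\nabla u/|\nabla u|$, and by harmonicity together with \eqref{eq:mean-curvature} this equals $-2\int_{N_t}|\nabla u|^2 H\,dA_g$. On the other hand, since $(M,g)$ is Ricci-flat, the Gauss equation \eqref{eq:scalar-curvature} gives the purely extrinsic identity $R_{g_t}=H^2-|\mr{II}|^2$, so that $\mE(g_t,|\nabla u|)=\int_{N_t}(H^2-|\mr{II}|^2)\,|\nabla u|\,dA_g$, with $\mr{II}=\Hess u|_{TN_t}/|\nabla u|$.

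Next I would form the difference and extract a tangential divergence. The subtlety is that, term by term, $\mA'(t)$ is only \emph{linear} in $\Hess u$ while $\mE(g_t,|\nabla u|)$ is \emph{quadratic}; the content of the proposition is that the linear part of $\mA'(t)$ is spurious once one exploits: (i) harmonicity of $u$, also in the form $\Hess u(\nu,\nu)=\nu(|\nabla u|)$; (ii) a decomposition of the form $\Delta_M f = \Delta_{N_t}(f|_{N_t}) + H\,\nu(f) + \Hess_M f(\nu,\nu)$ applied to $f=|\nabla u|^2$, combined with the Bochner identity $\Delta_M|\nabla u|^2=2|\Hess u|^2$; (iii) integration by parts on the closed manifold $N_t$, which kills the $\Delta_{N_t}$-term; and (iv) the constancy $\int_{N_t}|\nabla u|\,dA_g=\vol_{g_N}(N)$ from \eqref{eq:constant-weighted-volume}, i.e. the fact that $(g_t,|\nabla u|)\in\mc G_1$, which is what lets one trade the surviving linear terms for quadratic ones. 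After these manipulations one is left with $\mA'(t)-\mE(g_t,|\nabla u|)=\int_{N_t}\mQ\,dA_g$, where $\mQ$ is a pointwise expression built from terms of the form $|\Hess u|^2$, $|\nabla\Hess u|\,|\Hess u|$, $(|\nabla u|-1)\,\Hess u\,\Hess u$, and ambient-curvature terms $\Rm_M*\Hess u*(\cdot)$; all of these are bounded pointwise by $C(g_N)\big(|\Hess u|^2+|\nabla\Hess u|\,|\Hess u|\big)$, the curvature contributions because $(M,g)$ is Ricci-flat and, by Anderson's theorem \cite{AndersonConvergence}, the tubes converge smoothly to $[0,L]\times N$ so $\Rm_M$ is uniformly bounded on the relevant range.

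Finally, Theorem~\ref{interior-estimate} gives $\|\Hess u\|_{C^1(N_t)}^2\le C(g_N)\,h_{[t-1,t+3]}$ for all $t\in[t_i,t_i+L]$, hence $|\Hess u|^2+|\nabla\Hess u|\,|\Hess u|\le C(g_N)\,h_{[t-1,t+3]}$ pointwise on $N_t$; moreover $\Area(N_t)$ is uniformly bounded, since $\int_{N_t}|\nabla u|\,dA_g\equiv\vol_{g_N}(N)$ and $|\nabla u|$ is uniformly bounded below for $i$ large. Integrating $\mQ$ over $N_t$ therefore yields $|\mA'(t)-\mE(g_t,|\nabla u|)|\le C(g_N)\,h_{[t-1,t+3]}$, as claimed. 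I expect the third step to be the main obstacle: one must pin down exactly which linear-in-$\Hess u$ combinations recombine—via harmonicity and the weighted-volume constraint—into a tangential divergence plus a genuinely quadratic remainder, so that no term of size $|\Hess u|$ survives the integration; this is where the argument should follow the book-keeping of \cite{ColdingMinicozziUniqueness}*{Section~3} closely, and where one must check that the resulting constant stays independent of $i$.
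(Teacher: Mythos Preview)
Your approach diverges substantially from the paper's. The paper gives essentially a three-line argument: it recalls the level-set formula $\mA'(t)=2\int_{\{u=t\}}\Hess u(\nabla u,\nabla u)/|\nabla u|\,dA_g$, invokes Theorem~\ref{interior-estimate} (and Proposition~\ref{gradient-mE}) to bound $\bigl||\nabla u|-1\bigr|$ and $|R_{g_t}|$ pointwise by $C\,h_{[t-1,t+3]}$, and concludes by ``combining these estimates.'' In particular, the paper bounds $\mA'$ and $\mE$ \emph{separately} and does not attempt any structural cancellation between them. By contrast, you are trying to exhibit $\mA'(t)-\mE(g_t,|\nabla u|)$ as a single level-set integral whose integrand is pointwise quadratic in $(\Hess u,\nabla\Hess u)$, which is a genuinely different and much more elaborate route.

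There is, however, a real gap in your proposal at exactly the step you flag as the main obstacle. The ingredients (i)--(iv) you list do not, as stated, combine to isolate $\mA'(t)=\int_{N_t}\nu(|\nabla u|^2)\,dA$ as a quadratic expression. Integrating the decomposition $\Delta_M f=\Delta_{N_t}f+H\,\nu f+\Hess_M f(\nu,\nu)$ with $f=|\nabla u|^2$ over the closed hypersurface $N_t$ kills the tangential Laplacian and produces $\int_{N_t}H\,\nu(|\nabla u|^2)\,dA$, which is \emph{already} quadratic since $H=-\Hess u(\nu,\nu)/|\nabla u|$, not the linear quantity $\int_{N_t}\nu(|\nabla u|^2)\,dA=\mA'(t)$ you need. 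Likewise, differentiating the weighted-volume constraint $\int_{N_t}|\nabla u|\,dA=\vol_{g_N}(N)$ just reproduces $\mS'(t)=0$, which is the tautology $H|\nabla u|+\Hess u(\nu,\nu)=0$ integrated; it does not furnish an independent relation that trades a linear term for a quadratic one. More conceptually, the alternative representation $\mA'(t)=-2\int_{\{u\ge t\}}|\Hess u|^2\,dV_g$ shows that $|\mA'(t)|$ measures Hessian mass over the \emph{entire} end $\{u\ge t\}$, whereas both $\mE(g_t,|\nabla u|)$ and $h_{[t-1,t+3]}$ are local to a bounded tube; a purely local identity on $N_t$, of the kind your steps (i)--(iv) would produce, cannot by itself force these to be comparable. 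So the ``book-keeping'' you defer to is not a matter of care but a missing idea.
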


\begin{proof}
By Theorem~\ref{interior-estimate} and Proposition~\ref{gradient-mE}, we have
\begin{align*}
    \mA'(t)
    &= \int_{\{u=t\}}
       \left\langle \nabla |\nabla u|^2, \frac{\nabla u}{|\nabla u|} \right\rangle dA_g
     = 2 \int_{\{u=t\}} \frac{\Hess u(\nabla u,\nabla u)}{|\nabla u|}\, dA_g, \\
    \bigl||\nabla u| - 1\bigr|
    &\le C \|\Hess u\|_{C^1(\{u=t\})}
     \le C\, h_{[t-1,\, t+3]}.\\
    |R_{g_t}|
    &\le C \|\Hess u\|_{C^1(\{u=t\})}
     \le C\, h_{[t-1,\, t+3]}.
\end{align*}
Combining these estimates yields the desired bound.
\end{proof}

\subsection{The second variation}
The remainder of this section is devoted to establishing the \L{}ojasiewicz--Simon inequality for the functional $\mE$.
For this purpose, we need to analyze the linearization $L_{\mE}$ of the projected gradient $\nabla_1 \mE$ of $\mE$ restricted to $\mc G_1$, which is equivalent to computing the second variation of $\mE$.

Assume that $(g_N + t h,\, e^{t v_t}) \in \mc G_1$ is a one-parameter family of variations.
Then the constraint defining $\mc G_1$ implies the identities
\begin{align*}
    \int_N \left(\tfrac{1}{2}\tr h + v\right)\, dV_{g_N} &= 0, \\
    \int_N \Bigl[
        \bigl(\tfrac{1}{2}\tr h + v\bigr)^2
        + \tfrac{1}{2}\tr h'
        - \tfrac{1}{2}|h|^2
        + 2v'
    \Bigr] \, dV_{g_N} &= 0 .
\end{align*}
We are now ready to compute the second variation of $\mE$.

\begin{theorem}\label{second-variation-mE}
The second variation of $\mE$ along $(g_N + t h,\, e^{t v})$ at $t=0$ is given by
\begin{align*}
\mE'' = \int_N \Big\{&
-\langle \delta^*\delta h,h\rangle
+\tfrac{1}{2}\langle \nabla^*\nabla h,h\rangle
+\tfrac{1}{2}\langle \Hess(\tr h),h\rangle
+\langle \Rm\circ h,h\rangle  \\
&\quad
+\langle h,\Hess v\rangle
-(\tr h)\Delta v
+(\delta^2 h-\Delta(\tr h))
\big(\tfrac{1}{2}\tr h+v\big)
\Big\} dV_{g_N}.
\end{align*}
\end{theorem}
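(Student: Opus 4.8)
\emph{Proof plan.} The plan is to obtain $\mE''$ by differentiating the first variation formula \eqref{eq:Bprime} once more along the one-parameter family $(g_t,w_t)\defeq(g_N+th,\,e^{tv})$, whose velocity at $t=0$ is exactly $(h,v)\in T\mc G$. The decisive structural fact is that at the base point $(g_N,1)$ every zeroth-order geometric quantity vanishes: Ricci-flatness of $g_N$ gives $\Ric_{g_N}=0$ and $R_{g_N}=0$, while $w\equiv 1$ forces $\Hess w=0$ and $\Delta w=0$. Feeding these into the formula for $\nabla\mE$ established above shows that the \emph{full} gradient $\nabla\mE(g_N,1)$ vanishes in $T\mc G$, not merely its projection $\nabla_1\mE$ onto $T\mc G_1$. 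Hence $\frac{d^2}{dt^2}\big|_{t=0}\mE(g_t,w_t)$ is independent of the second-order data of the curve: the Leibniz term proportional to the acceleration of $(g_t,w_t)$ pairs against $\nabla\mE(g_N,1)=0$ and drops. This is why neither $h'$ nor $v'$ enters the final formula, and why the second-order constraint identities for $\mc G_1$ recorded above play no role in the computation itself; it also means the quadratic form computed is automatically the Hessian of $\mE|_{\mc G_1}$ at its critical point once $(h,v)$ is tangent to $\mc G_1$.

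Concretely, I would differentiate \eqref{eq:Bprime} term by term at $t=0$, using $w_0=1$ and $\dot w_0=v$. Each integrand there is a product of a zeroth-order factor --- $\Ric_{g_t}$, $R_{g_t}$, $\Hess_{g_t} w_t/w_t$, or $\Delta_{g_t} w_t/w_t$ --- with lower-order factors; since that zeroth-order factor vanishes at $t=0$ (Ricci-flatness, resp.\ $w_0\equiv 1$), the only surviving Leibniz contribution is the one in which $\frac{d}{dt}$ lands on it, all remaining factors frozen at $(g_N,1)$. The needed linearizations are standard: $\frac{d}{dt}\big|_0\Ric_{g_t}=D\Ric_{g_N}(h)$, the Lichnerowicz linearization of the Ricci tensor, which at the Ricci-flat metric $g_N$ is assembled only from $\nabla^*\nabla h$, $\delta^*\delta h$, $\Hess(\tr h)$ and $\Rm\circ h$ (in the conventions of \cite{ColdingMinicozziUniqueness}); $\frac{d}{dt}\big|_0 R_{g_t}=DR_{g_N}(h)=\delta^2 h-\Delta(\tr h)$, again using $\Ric_{g_N}=0$; and $\frac{d}{dt}\big|_0\Hess_{g_t} w_t=\Hess_{g_N} v$, $\frac{d}{dt}\big|_0\Delta_{g_t} w_t=\Delta_{g_N} v$, since $\Hess$ and $\Delta$ annihilate the constant $w_0$ and the variation of the Levi-Civita connection is contracted against $\nabla w_0=0$. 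Collecting the surviving contributions: the term $-\langle\Ric_g,h\rangle$ gives $-\langle D\Ric_{g_N}(h),h\rangle$, i.e.\ the first four terms of the asserted formula; $\langle h,\Hess w/w\rangle$ gives $\langle h,\Hess v\rangle$; $-(\tr h)\,\Delta w/w$ gives $-(\tr h)\Delta v$; and $R_g(\tfrac12\tr h+v)$ gives $(\delta^2 h-\Delta(\tr h))(\tfrac12\tr h+v)$. Their sum is exactly the displayed expression.

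The main obstacle is sign bookkeeping, which must be carried out in the same conventions as Proposition~\ref{first-variation} (inherited from \cite{ColdingMinicozziUniqueness}*{Proposition~3.9}): one must be careful with the signs in the Lichnerowicz formula for $D\Ric$ and in the scalar-curvature linearization, and with the pairing conventions $\langle\cdot,\cdot\rangle$ and $\Psi$ of Lemma~\ref{lem:2.27}. I would cross-check the outcome against the classical second variation of the (weighted) Einstein--Hilbert functional at an Einstein metric --- equivalently, the Einstein--Hilbert summand of the $\mR$-functional in \cite{ColdingMinicozziUniqueness}*{Section~3} --- specialized to vanishing Einstein constant. I expect the term-by-term differentiation itself to be routine once the vanishing of $\nabla\mE(g_N,1)$ is recorded and the linearization formulas are fixed; tracking the curvature terms and signs is the only thing that can realistically go wrong.
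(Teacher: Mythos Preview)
Your proposal is correct and follows essentially the same route as the paper: differentiate the first variation \eqref{eq:Bprime} term by term along $(g_N+th,e^{tv})$, and use that at $(g_N,1)$ every zeroth-order factor ($\Ric$, $R$, $\Hess w$, $\Delta w$) vanishes so that only the linearizations $D\Ric_{g_N}(h)$, $DR_{g_N}(h)=\delta^2h-\Delta(\tr h)$, $\Hess v$, $\Delta v$ survive. Your observation that the full gradient $\nabla\mE(g_N,1)$ vanishes --- hence the acceleration terms $h',v'$ and the second-order $\mc G_1$-constraint play no role --- is exactly what the paper uses implicitly when it notes that the $R_{\bar g}(\tfrac12\tr h'+v')$ contribution dies because $R_{g_N}=0$; if anything, your explanation for why the volume-form variation drops (the integrand itself vanishes pointwise at $t=0$) is cleaner than the paper's appeal to the $\mc G_1$-constraint.
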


Here, in local coordinates, $\langle \Rm\circ h,h\rangle = \Rm_{ikj\ell} h^{k\ell} h^{ij}$.

\begin{proof}
Recall that the first variation of $\mE$ for a general pair $(g+th,\, w e^{t v})$ is
\begin{equation}\label{eq:Bprime-recall}
\mE'
= \int_N \Big\{
    -\langle \Ric_g,h\rangle
    + \left\langle h,\frac{\Hess w}{w}\right\rangle
    - (\tr h)\frac{\Delta w}{w}
    + R_g\Big(\tfrac{1}{2}\tr h + v\Big)
\Big\} w\, dV_g .
\end{equation}
We set $\bar g_t = g_N + t h$ and $w_t = e^{t v}$, so that $\bar g_0 = g_N$ and $w_0 = 1$.
Differentiating \eqref{eq:Bprime-recall} with respect to $t$ and then evaluating at $t=0$ yields the second variation.
For simplicity, we write $\bar g \defeq \bar g_t$ and $w \defeq w_t$ in the intermediate steps.

\smallskip
\noindent\textbf{(i) The curvature term.}
\begin{align*}
\langle \Ric_{\bar g},h\rangle'
&= \langle \Ric'_{\bar g},h\rangle
+ \langle \Ric_{\bar g},h'\rangle
- \Ric_{\bar g,ij} h_{k\ell} h^{ik} \bar g^{j\ell}
- \Ric_{\bar g,ij} h_{k\ell} \bar g^{ik} h^{j\ell} \\
&= \langle \Ric'_{\bar g},h\rangle \\
&= \langle \delta^*\delta h,h\rangle
- \tfrac{1}{2}\langle \nabla^*\nabla h,h\rangle
- \tfrac{1}{2}\langle \Hess(\tr h),h\rangle
- \Rm_{ikj\ell} h^{k\ell} h^{ij},
\end{align*}
where we used the standard variation formula for the Ricci tensor.

\smallskip
\noindent\textbf{(ii) The $\Hess w$ and $\Delta w$ terms.}
Since $w=e^{t v}$, we have at $t=0$
\[
    (\Hess w)' = \Hess v,
    \qquad
    (\Delta w)' = \Delta v .
\]
Therefore,
\[
    \big\langle h,\tfrac{\Hess w}{w}\big\rangle'
    - \left((\tr h)\tfrac{\Delta w}{w}\right)'
    = \langle h,\Hess v\rangle - (\tr h)\Delta v .
\]

\smallskip
\noindent\textbf{(iii) The scalar curvature term.}
Differentiating
$
R_{\bar g}\big(\tfrac{1}{2}\tr h + v\big)
$
yields
\[
\bigl(\delta^2 h - \Delta(\tr h) - \langle \Ric_{\bar g},h\rangle\bigr)
\bigl(\tfrac{1}{2}\tr h + v\bigr)
+ R_{\bar g}\big(\tfrac{1}{2}\tr h' + v'\big).
\]
Evaluating at $(g_N,1)$ gives
\[
    \bigl(\delta^2 h - \Delta(\tr h)\bigr)
    \bigl(\tfrac{1}{2}\tr h + v\bigr).
\]

\smallskip
\noindent
Finally, the derivative of $(w\, dV_g)$ vanishes at $t=0$ due to the constraint defining $\mc G_1$.
Combining all contributions yields
\begin{align*}
\mE'' = \int_N \Big\{&
-\langle \delta^*\delta h,h\rangle
+\tfrac{1}{2}\langle \nabla^*\nabla h,h\rangle
+\tfrac{1}{2}\langle \Hess(\tr h),h\rangle
+\langle \Rm\circ h,h\rangle \\
&\quad
+\langle h,\Hess v\rangle
-(\tr h)\Delta v
+(\delta^2 h-\Delta(\tr h))
\big(\tfrac{1}{2}\tr h+v\big)
\Big\} dV_{g_N},
\end{align*}
which completes the proof.
\end{proof}

We now derive explicit formulas for the second variation $\mE''$ under several natural classes of variations.

\begin{enumerate}
\item \textbf{Transverse trace-free second variation.}
Assume that $h$ satisfies $\delta h = 0$ and $\tr h = 0$. Then
\begin{align}\label{eq:transverse-tracefree-second-variation}
    \mE''
    = \tfrac{1}{2}\int_N \langle \Delta_L h, h\rangle \, dV_{g_N},
\end{align}
where $\Delta_L h = \Delta h + 2 R_{ikj\ell} h^{k\ell}$ denotes the Lichnerowicz Laplacian.
Here we have used integration by parts:
\[
    \int_N \langle h, \Hess v\rangle \, dV_{g_N}
    = -\int_N \langle \delta h, \nabla v\rangle \, dV_{g_N}
    = 0.
\]

\item \textbf{Conformal second variation.}
Suppose that
\[
    h = \phi\, g_N
\]
at $t=0$, for some smooth function $\phi$. Then the following identities hold:
\begin{equation}\label{eq:several-quantities-under-conformal-variation}
\begin{aligned}
    \tr h &= (n-1)\phi,\\
    \delta h &= \nabla \phi,\\
    \nabla(\delta h) &= \Hess \phi,\\
    \delta^2 h &= \Delta \phi,\\
    \Delta h &= (\Delta \phi) g_N.
\end{aligned}
\end{equation}
Substituting \eqref{eq:several-quantities-under-conformal-variation} into
Theorem~\ref{second-variation-mE} and simplifying, we obtain
\begin{equation}\label{eq:conformal-second-variation}
\begin{split}
    \mE''
    =& \int_N \Big\{
    -\langle \Hess \phi, \phi g_N\rangle
    + \tfrac{1}{2}\langle (\Delta \phi) g_N, \phi g_N\rangle
    + \tfrac{1}{2}\langle (n-1)\Hess \phi, \phi g_N\rangle
    + \phi^2 R_{ikj\ell} g^{k\ell} g^{ij} \\
    \;\;\;\;\;&\quad
    + \langle \phi g_N, \Hess v\rangle
    - (n-1)\phi \Delta v
    + \big(\Delta \phi - (n-1)\Delta \phi\big)
      \Big(\tfrac{n-1}{2}\phi + v\Big)
    \Big\} dV_{g_N} \\
    =& (n-2)\int_N
    \Big[
        \phi \Delta \phi
        - \phi \Delta v
        - \Delta \phi\Big(\tfrac{n-1}{2}\phi + v\Big)
    \Big] dV_{g_N}.
\end{split}
\end{equation}
\end{enumerate}

Equation~\eqref{eq:conformal-second-variation} shows that the linearization of
$\nabla \mE$ maps conformal variations into the span of conformal variations
together with variations tangent to the action of diffeomorphisms.

Consider a conformal path $(g_t, e^{t v_t})$ with $g_t' = \phi g_N$ and $v_t' = v$.
Recall that
\[
    \nabla \mE
    = R_g \left(\tfrac{1}{2}\Psi(g), 1\right) w
    + \left(
        -\frac{\Delta w}{w}\Psi(g)
        + \Psi\!\left(-\Ric_g + \frac{\Hess w}{w}\right),
        0
      \right) w.
\]
For convenience, define
\[
    J \defeq -\Ric_g + \frac{\Hess w}{w} - \frac{\Delta w}{w} g.
\]
At $t=0$, we have
\[
    R_g = 0, \qquad J = 0, \qquad \Psi = \id, \qquad w = 1.
\]
Following \cite{ColdingMinicozziUniqueness}*{(5.57), (5.58)}, it follows that
\[
    (\nabla \mE)'
    = R'_g\left(\tfrac{1}{2} g_N, 1\right) + (J',0).
\]

For a conformal variation, evaluating at $t=0$ gives
\begin{align*}
    R'_g &= (2-n)\Delta \phi,\\
    \Ric' &= \tfrac{1}{2}\bigl((3-n)\Hess \phi - (\Delta \phi) g_N\bigr),\\
    (\Hess e^{t v})' &= \Hess v,\\
    (\Delta e^{t v})' &= \Delta v.
\end{align*}
Consequently,
\[
    J'
    = -\tfrac{1}{2}\bigl((3-n)\Hess \phi - (\Delta \phi) g_N\bigr)
      + \Hess v
      - (\Delta v) g_N.
\]
Therefore,
\begin{align}\label{eq:linearization-conformal-variation}
    (\nabla \mE)'
    = (2-n)\Delta \phi\left(\tfrac{1}{2} g_N, 1\right)
      + \left(\tfrac{n-3}{2}\Hess \phi + \Hess v, 0\right)
      + \left(\Big(\tfrac{\Delta \phi}{2} - \Delta v\Big) g_N, 0\right).
\end{align}

Finally, note that
\[
    \mc L_X g_N
    = \tfrac{n-3}{2}\Hess \phi + \Hess v,
    \qquad
    X = \tfrac{n-3}{4}\nabla \phi + \tfrac{1}{2}\nabla v.
\]
Thus, the diffeomorphism component is generated by the vector field
$\tfrac{n-3}{4}\nabla \phi + \tfrac{1}{2}\nabla v$.

\subsection{The slice theorem}
In this subsection, we will recall the slice theorem.
Before stating the result, we introduce several notations. Let $\mD$ be the space of $C^{3,\beta}$ diffeomorphisms on the compact cross section $N$ and $\mT$ be the space of pairs of symmetric tensors and functions which can be decomposed as an orthogonal direct sum
\begin{align*}
    \mT=\mT_{\mD}\oplus \mT_1,\text{ where }\mT_1&:=\left\{(h,v)\in C^{2,\beta}\Big| \delta h=0\right\}\\
    \mT_{\mD}&:= \left\{(\mL_V g_0, 0)\Big| V\; \text{is a}\; C^{3,\beta}\; \text{vector field}\right\}.
\end{align*}
We will be most interested in variations that are tangent to $\mG_1$ and its intersection with $\mT_1$:
\begin{align*}
    \mT^0=\left\{(h,v)\in C^{2,\beta}\Big| \int\left(\frac{1}{2}\tr h+v\right)dV_{g_0}=0\right\}, \quad \mT^0_1:=\mT_1\cap \mT^0.\\
\end{align*}

In order to study the Fredholm property of $L_{\mE}$ in the next subsection, it is necessary to further decompose $\mT^0_1$:
\begin{align*}
    \mT^0_1&= \mT_{tt}\oplus \mT^0_{\perp}
\end{align*}
where $\mT_{tt}$ denotes the space of transverse traceless variations
\begin{align*}
    \mT_{tt}=\left\{(h,v)\in C^{2,\beta}\Big| \delta h=0, \tr h=0\right\},
\end{align*}
and its orthogonal part $\mT^0_1$ is defined by
\begin{align*}
    \mT^0_{\perp}=\mT_1\cap \left(\mT^0_c+\mT_{\mD}\right), \quad \mT_c&=\left\{(\phi g_0,v)\in C^{2,\beta}\right\},\\ \mT_c^0&:=\mT^0\cap \mT_c, \quad \mT_{c\mD}:=\mT_c\cap \mT_{\mD}.
\end{align*}

We conclude this subsection with the following decomposition lemma.
\begin{lemma}[\cite{ColdingMinicozziUniqueness}*{Lemma~6.25}]
    Given any $h\in \mT^0_1$, there exist $h_{tt}\in \mT_{tt}$, $h_c\in \mT^0_c$, and $h_{\mD}\in \mT_{\mD}$ so
    \begin{align*}
        h=h_{tt}+h_{c}+h_{\mD}.
    \end{align*}
    Conversely, given any $h_c\in \mT^0_c$, there exists $h_{\mD}\in \mT_{\mD}$ so that $h_{c}+h_{\mD}\in \mT^0_1$.
\end{lemma}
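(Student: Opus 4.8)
The plan is to prove this as a Berger--Ebin / York-type decomposition adapted to the weighted pairs $(h,v)$, carried out on the closed Ricci-flat manifold $(N,g_0)$ by standard elliptic theory. First I would dispose of the function components, which are essentially free: since every element of $\mT_{\mD}$ has vanishing function part, once the tensor part is written as $h=h_{tt}+\phi g_0+\mL_V g_0$ with $h_{tt}$ transverse traceless, one distributes $v$ by letting $v_c$ be its mean over $N$ and setting $v_{tt}:=v-v_c$. The constraint defining $\mT^0_1$, namely $\int_N(\tfrac12\tr h+v)\,dV_{g_0}=0$, together with $\int_N\divsymb V\,dV_{g_0}=0$, forces $\int_N(\tfrac{n-1}{2}\phi+v)\,dV_{g_0}=0$, which is exactly what makes $(\phi g_0,v_c)\in\mT^0_c$ and $\int_N v_{tt}\,dV_{g_0}=0$. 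So the whole content is the decomposition of the tensor part, and since the given $h$ is already transverse ($\delta h=0$), the point is to split off a transverse traceless summand while keeping the conformal-plus-Lie-derivative remainder transverse.

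For that I would eliminate $\phi$ from the trace equation $\tr h=(n-1)\phi+2\divsymb V$, so $\phi=\tfrac{1}{n-1}(\tr h-2\divsymb V)$, and then impose $\delta\bigl(h-\phi g_0-\mL_V g_0\bigr)=0$. Using $\delta(\phi g_0)=-\nabla\phi$ and the Ricci-flat identity $\delta(\mL_V g_0)=2\,\delta\delta^{*}V=\nabla^{*}\nabla V-\nabla\divsymb V$, this reduces to the single equation
\[
   \nabla^{*}\nabla V-\tfrac{n-3}{n-1}\,\nabla\divsymb V=\tfrac{1}{n-1}\,\nabla\tr h
\]
for the vector field $V$. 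The operator on the left is self-adjoint and elliptic — its symbol $|\xi|^{2}\,\mathrm{Id}-\tfrac{n-3}{n-1}\,\xi\otimes\xi$ is positive definite since $\tfrac{n-3}{n-1}<1$ — with kernel the parallel vector fields, and the right-hand side is $L^{2}$-orthogonal to that kernel because $\langle\nabla\tr h,W\rangle_{L^2}=-\int_N(\tr h)\,\divsymb W\,dV_{g_0}=0$ whenever $\nabla W=0$. Hence the equation is solvable, and $h_{tt}:=h-\phi g_0-\mL_V g_0$ is transverse traceless by construction; this gives the first assertion.

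For the converse I would run the same argument in reverse: given $(\phi g_0,v_c)\in\mT^0_c$, the volume constraint for $(\phi g_0+\mL_V g_0,v_c)$ holds automatically (again because $\int_N\divsymb V\,dV_{g_0}=0$), so one only needs $\delta(\phi g_0+\mL_V g_0)=0$, i.e.\ $2\,\delta\delta^{*}V=\nabla\phi$. This is solvable because $\delta\delta^{*}$ is self-adjoint elliptic with kernel the Killing fields and $\nabla\phi$ is orthogonal to Killing fields by the same integration by parts, and $\mL_V g_0$ does not depend on the choice of $V$ modulo $\ker\delta^{*}$. Ricci-flatness of $(N,g_0)$ enters in two places: it removes the curvature term from $\delta\delta^{*}$, so the operators above are the clean rough-Laplacian ones, and it is what keeps the dimensional constant in the first equation within the elliptic range.

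I expect the main obstacle to be bookkeeping rather than analysis: the three summands $\mT_{tt}$, $\mT^0_c$, $\mT_{\mD}$ are \emph{not} $L^{2}$-orthogonal and genuinely overlap through $\mT_{c\mD}$ (gradient fields $V$ for which $\mL_V g_0$ is pure trace), so the decomposition is not unique and cannot simply be read off the standard orthogonal Berger--Ebin splitting. The argument has to be arranged, as above, to produce \emph{some} decomposition that is compatible with the transversality constraint $\delta h=0$, which is all the lemma asserts; verifying the solvability (orthogonality) conditions for the two elliptic equations and checking that the constraint-respecting distribution of $v$ is consistent are the steps requiring care. The statement is also exactly \cite{ColdingMinicozziUniqueness}*{Lemma~6.25}, whose proof one may instead follow verbatim after noting the Ricci-flatness simplifications.
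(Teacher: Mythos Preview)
The paper does not prove this lemma; it merely quotes it from \cite{ColdingMinicozziUniqueness}*{Lemma~6.25}. Your proposal is therefore not being compared against an in-paper argument but supplies one, and the approach you outline --- a Berger--Ebin/York decomposition obtained by solving an elliptic equation for the vector field $V$ --- is correct and is essentially how the cited result is proved. The reduction to $\nabla^{*}\nabla V-\tfrac{n-3}{n-1}\nabla\divsymb V=\tfrac{1}{n-1}\nabla\tr h$ is right, the operator is self-adjoint elliptic, and your orthogonality check against parallel (resp.\ Killing) fields for the two solvability conditions is valid. Your handling of the function component and of the $\mT^0$-constraint is also fine; as you note, the decomposition is not unique because $\mT_{tt}$, $\mT^0_c$, $\mT_{\mD}$ are not mutually orthogonal, and the lemma only asks for existence.

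One minor correction: your closing remark that Ricci-flatness ``keeps the dimensional constant in the first equation within the elliptic range'' is inaccurate. The principal symbol $|\xi|^2\mathrm{Id}-\tfrac{n-3}{n-1}\xi\otimes\xi$ is determined purely by the leading terms and is positive definite for every $n\ge 2$ regardless of curvature; Ricci-flatness only simplifies the lower-order terms (and hence the kernel analysis, where you use it to conclude $\divsymb V=0$ and then $\nabla V=0$). This does not affect the argument's validity.
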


\subsection{Verifying \ref{R3}: the \L{}ojasiewicz--Simon inequality}

Now we are ready to verify \ref{R3} in this subsection. The following proposition describes the action of $L_{\mE}$ on subspaces $\mT^0_c$, $\mT_{tt}$, $\mT_{\mD}$ and $\mT^0_{\perp}$.

\begin{proposition}
    The linearization $L_{\mE}$ of $\mE$ has the following properties:
    \begin{enumerate}
         \item\label{item:conformalFredholm} The restriction of $L_{\mE}$ to $\mT^0_c\defeq\mT_c\cap \mT^0$ is Fredholm.
         \item \label{item:TTFredholm} The restriction of $L_{\mE}$ to $\mT_{tt}$ is Fredholm.
         \item \label{item:Diffeo} $L_{\mE}$ is identically zero on $\mT_{\mD}$ and maps to $\mT^{\perp}_{\mD}$.
         \item \label{item:ortho}$L_{\mE}:\mT^0_{\perp}\to \mT^{\perp}_{tt}$ and $L_{\mE}:\mT_{tt}\to [\mT_{\perp}^{0}]^{\perp}$. 
    \end{enumerate}
\end{proposition}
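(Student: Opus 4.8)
The plan is to transcribe the scheme of \cite{ColdingMinicozziUniqueness}*{Section~6} to the present, simpler functional $\mE$, relying on three ingredients: (i) $L_{\mE}$ is formally self-adjoint for the $L^{2}$ inner product \eqref{eq:inner} at $(g_N,1)$, being the operator that represents the symmetric Hessian $\mE''$ of Theorem~\ref{second-variation-mE} restricted to $\mc G_1$; (ii) $\mE$ and the weighted-volume constraint $\int_N w\,dV_g$ are invariant under the pullback action of $\Diff(N)$ on pairs $(g,w)$; and (iii) the explicit formulas \eqref{eq:transverse-tracefree-second-variation} and \eqref{eq:linearization-conformal-variation} for $L_{\mE}$ along transverse-traceless and along conformal variations. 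Throughout I would exploit that $(g_N,1)$ is a genuine critical point of the \emph{unrestricted} $\mE$ (the first variation in Proposition~\ref{first-variation} vanishes identically at $(g_N,1)$ by Ricci-flatness), so the projection onto $\mc G_1$ only adds a finite-rank correction along $\nabla\mV$ and does not disturb any principal symbol.

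For assertion~\ref{item:Diffeo} I would note that, by (ii), $\mc G_1$ is $\Diff(N)$-invariant, $\nabla_1\mE$ is equivariant, and every point of the orbit $\{(\varphi^{*}g_N,1)\}$ is a critical point of $\mE|_{\mc G_1}$; differentiating $\nabla_1\mE(\varphi_t^{*}g_N,1)\equiv 0$ along $\varphi_t=\exp(tV)$ gives $L_{\mE}(\mc L_V g_N,0)=0$, so $L_{\mE}$ vanishes on $\mT_{\mD}$, and self-adjointness then forces $\im L_{\mE}\subseteq\mT_{\mD}^{\perp}$. For assertion~\ref{item:TTFredholm} I would read off from \eqref{eq:transverse-tracefree-second-variation} that on $\mT_{tt}$ the operator $L_{\mE}$ is $\tfrac12\Delta_L$, the Lichnerowicz Laplacian of $g_N$; this is a formally self-adjoint second-order elliptic operator on the closed manifold $N$ which, because $g_N$ is Einstein (indeed Ricci flat), commutes with the trace and with the divergence and hence preserves the transverse-traceless subspace, so it is Fredholm of index $0$ between $C^{2,\beta}$ and $C^{0,\beta}$ by the standard theory of elliptic operators on closed manifolds.

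For assertion~\ref{item:conformalFredholm} I would restrict to the conformal directions and use \eqref{eq:linearization-conformal-variation}: after discarding the $\mT_{\mD}$-summand $(\mc L_X g_N,0)$ with $X=\tfrac{n-3}{4}\nabla\phi+\tfrac12\nabla v$, on which $L_{\mE}$ is zero, the operator sends $(\phi,v)$ to the pair whose tensor component is $(\tfrac{3-n}{2}\Delta\phi-\Delta v)\,g_N$ and whose weight component is $(2-n)\Delta\phi$ — a system whose principal symbol has determinant $(2-n)|\xi|^{4}\neq0$ for $n\ge 3$, hence elliptic, hence Fredholm on $C^{2,\beta}\to C^{0,\beta}$ — while restricting further to the codimension-one constraint defining $\mT^{0}$ preserves the Fredholm property. (For $n=2$ the cross section is one-dimensional, $R_{g_N}\equiv 0$ and $\mE\equiv 0$, so there is nothing to prove.) For assertion~\ref{item:ortho}, any $h\in\mT^0_{\perp}=\mT_1\cap(\mT^0_c+\mT_{\mD})$ splits as $h_c+h_{\mD}$, so $L_{\mE}h=L_{\mE}h_c$ by assertion~\ref{item:Diffeo}; by \eqref{eq:linearization-conformal-variation} this lies in $\mT_c+\mT_{\mD}$ with no transverse-traceless part, and when paired against $k\in\mT_{tt}$ the conformal tensor term contributes $\int_N\phi\,\tr k=0$, the $\mT_{\mD}$ term contributes $2\int_N\langle X^{\flat},\delta k\rangle=0$, and the curvature cross-term $\langle\Rm\circ(\phi g_N),k\rangle$ in $\mE''$ vanishes because $g_N$ is Ricci flat — this is precisely the simplification over \cite{ColdingMinicozziUniqueness}. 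Hence $L_{\mE}(\mT^0_{\perp})\subseteq\mT_{tt}^{\perp}$, and the companion inclusion $L_{\mE}(\mT_{tt})\subseteq[\mT^0_{\perp}]^{\perp}$ then follows from self-adjointness.

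The step I expect to be the main obstacle is the linear-algebraic bookkeeping underlying assertions~\ref{item:conformalFredholm} and~\ref{item:ortho}: namely, pinning down exactly which summand of the orthogonal decomposition $\mT=\mT_{\mD}\oplus\mT_{tt}\oplus\mT^0_{\perp}\oplus\cdots$ each term of \eqref{eq:linearization-conformal-variation} occupies, correctly matching the weight (function) directions against the trace constraint defining $\mc G_1$ (so that $\mT_{tt}$ and $\mT^0_{\perp}$ are genuinely $L^2$-orthogonal), and confirming that the conformal system is genuinely elliptic rather than merely triangular. Once this is settled, the decomposition lemma \cite{ColdingMinicozziUniqueness}*{Lemma~6.25} assembles the four assertions into the Fredholmness of $L_{\mE}$ on $T\mc G_1$, which is the input for the \L{}ojasiewicz--Simon inequality in property~\ref{R3}.
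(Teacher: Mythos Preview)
Your proposal is correct and follows essentially the same route as the paper. The paper's proof is terser: it defers items~\ref{item:Diffeo} and~\ref{item:ortho} entirely to \cite{ColdingMinicozziUniqueness}*{Proposition~6.31} together with \eqref{eq:linearization-conformal-variation}, and for item~\ref{item:conformalFredholm} it reads the operator off from the second-variation formula \eqref{eq:conformal-second-variation} rather than from \eqref{eq:linearization-conformal-variation} as you do, obtaining the symmetric block matrix $\begin{pmatrix}\tfrac{3-n}{2}&-1\\-1&0\end{pmatrix}\Delta$ instead of your $\begin{pmatrix}\tfrac{3-n}{2}&-1\\2-n&0\end{pmatrix}\Delta$ --- but both symbols are nondegenerate and the ellipticity conclusion is identical.
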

\begin{proof}
We only need to verify \eqref{item:conformalFredholm} and
\eqref{item:TTFredholm}, since \eqref{item:Diffeo} and
\eqref{item:ortho} follow directly from
\cite{ColdingMinicozziUniqueness}*{Proposition~6.31} together with
\eqref{eq:linearization-conformal-variation}.

First, consider conformal variations $h=\phi g_N$. A direct computation yields
\begin{align*}
\mE''
&=(n-2)\int_N \Bigl\{
\phi \Delta \phi
-\phi \Delta v
-\Delta\phi\Bigl(\tfrac{n-1}{2}\phi+v\Bigr)
\Bigr\}\,dV_{g_N} \\
&=(n-2)\int_N \bigl\langle L_{\mE}(\phi,v),(\phi,v)\bigr\rangle
\,dV_{g_N},
\end{align*}
where
\begin{align*}
L_{\mE}(\phi,v)
=\Bigl(\tfrac{3-n}{2}\Delta\phi,\,-2\Delta\phi\Bigr).
\end{align*}
In block form, $L_{\mE}$ may be written as the symmetric operator
\begin{align*}
\begin{pmatrix}
\frac{3-n}{2}\Delta & -\Delta \\
-\Delta & 0
\end{pmatrix}
=
\begin{pmatrix}
\frac{3-n}{2} & -1 \\
-1 & 0
\end{pmatrix}
\Delta .
\end{align*}
Since $\Delta$ is elliptic and the coefficient matrix is nondegenerate,
this second-order operator is elliptic. This establishes
\eqref{item:conformalFredholm}.

Next, suppose $h$ satisfies $\delta h=0$ and $\tr h=0$. Then
\begin{align*}
\mE''=\frac12\int_N \langle \Delta_L h,h\rangle\,dV_{g_N}.
\end{align*}
Because the Lichnerowicz Laplacian $\Delta_L$ is elliptic on
transverse-traceless tensors, the corresponding linear operator is
Fredholm, verifying \eqref{item:TTFredholm}.

\end{proof}

Note that the nontrivial components of the restrictions of $L_{\mE}$ to
$\mc T_1^0$ and $\mc T_{tt}$ coincide with those of $L_{\mR}$ in
\cite{ColdingMinicozziUniqueness}. Consequently, the following result
follows immediately, without any modification.

\begin{theorem}\label{Fredholm}
The restriction of $L_{\mE}$ to $\mc T_1^0$ is a Fredholm operator from
$\mc T_1^0$ to the $C^{\beta}$-closure of $\mc T_1^0$.
\end{theorem}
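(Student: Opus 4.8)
The plan is to combine the orthogonal decomposition $\mc T_1^0=\mc T_{tt}\oplus\mc T^0_{\perp}$ from \cite{ColdingMinicozziUniqueness}*{Lemma~6.25} with the block structure of $L_{\mE}$ established in the preceding proposition. The $C^{\beta}$-closure of $\mc T_1^0$ splits correspondingly as $\overline{\mc T_{tt}}\oplus\overline{\mc T^0_{\perp}}$, and parts \eqref{item:Diffeo}--\eqref{item:ortho} of that proposition say precisely that $L_{\mE}$ is block-diagonal for this splitting: $L_{\mE}(\mc T_{tt})\subset\overline{\mc T_{tt}}$ and $L_{\mE}(\mc T^0_{\perp})\subset\overline{\mc T^0_{\perp}}$, the potential off-diagonal contributions being forced into the complementary summands $\mc T_{tt}^{\perp}$ and $[\mc T^0_{\perp}]^{\perp}$. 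A bounded operator that is block-diagonal with respect to direct sum decompositions of source and target is Fredholm if and only if each diagonal block is, so it suffices to treat the two blocks separately.

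On the transverse-traceless block, the second variation formula \eqref{eq:transverse-tracefree-second-variation} identifies $L_{\mE}|_{\mc T_{tt}}$ with a fixed multiple of the Lichnerowicz Laplacian $\Delta_L$. Its principal symbol is that of the rough Laplacian on symmetric $2$-tensors, so it is elliptic; $\Delta_L$ preserves the transverse-traceless condition because $g_N$ is Ricci flat, and standard Hölder--Schauder theory then makes it a Fredholm operator from $\mc T_{tt}$ to its $C^{\beta}$-closure. This is part \eqref{item:TTFredholm} of the proposition.

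On the $\mc T^0_{\perp}$ block, the crucial point is \eqref{item:Diffeo}: $L_{\mE}$ annihilates $\mc T_{\mD}$. Writing an element of $\mc T^0_{\perp}=\mc T_1\cap(\mc T^0_c+\mc T_{\mD})$ as $h_c+h_{\mD}$ with $h_c\in\mc T^0_c$ and $h_{\mD}\in\mc T_{\mD}$, we have $L_{\mE}(h_c+h_{\mD})=L_{\mE}(h_c)$, so the block depends only on $h_c$. By the converse part of \cite{ColdingMinicozziUniqueness}*{Lemma~6.25} the map $h_c\mapsto h_c+h_{\mD}$ is a linear isomorphism $\mc T^0_c\to\mc T^0_{\perp}$, and \eqref{item:ortho} keeps the image in $\overline{\mc T^0_{\perp}}$; hence $L_{\mE}|_{\mc T^0_{\perp}}$ is conjugate, through this isomorphism and the projection to $\overline{\mc T^0_{\perp}}$ on the target, to $L_{\mE}|_{\mc T^0_c}$. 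The latter is the elliptic constant-coefficient $2\times 2$ system of \eqref{item:conformalFredholm}, with nondegenerate coefficient matrix and hence Fredholm; conjugation by isomorphisms preserves the Fredholm property, so the $\mc T^0_{\perp}$ block is Fredholm, and combining the two blocks gives the theorem.

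I expect the one genuinely delicate point to be the reduction on the $\mc T^0_{\perp}$ summand: a priori $L_{\mE}$ has an infinite-dimensional ``nullity'' along the diffeomorphism directions $\mc T_{\mD}$, and only after passing to the slice --- that is, using the slice theorem recalled in the previous subsection to replace $\mc T^0_c+\mc T_{\mD}$ by its transverse representatives $\mc T^0_{\perp}$ --- is one left with the elliptic conformal system above. Once this reduction is in place, the remaining bookkeeping (precise function spaces, identification of kernel and cokernel) is routine, and since the nontrivial components of $L_{\mE}$ on $\mc T_1^0$ and on $\mc T_{tt}$ coincide verbatim with those of $L_{\mR}$ in \cite{ColdingMinicozziUniqueness}, it may simply be quoted from there without modification.
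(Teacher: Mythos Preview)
Your proposal is correct and aligns with the paper's approach: the paper itself gives no independent proof, observing only that the nontrivial components of $L_{\mE}$ on $\mc T_1^0$ and $\mc T_{tt}$ coincide with those of $L_{\mR}$ in \cite{ColdingMinicozziUniqueness}, so the result follows without modification. Your sketch unpacks precisely why that citation suffices---the block-diagonal structure coming from \eqref{item:ortho}, the Lichnerowicz Laplacian on the transverse-traceless block, and the reduction of the $\mc T^0_{\perp}$ block to the conformal elliptic system via diffeomorphism invariance---and your final paragraph lands on the same conclusion the paper states outright.
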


We first establish a \L{}ojasiewicz--Simon inequality for the functional
$\widetilde{\mE}\colon \mc T_1^0 \to \R$ defined by
\[
    \widetilde{\mE} \defeq \mE \circ \exp,
\]
where $\exp\colon \mc T_1^0 \to \mc G_1$ is the exponential map constructed in
\cite{ColdingMinicozziUniqueness}*{Lemma~6.15}.
Let $\Pi_K$ denote the orthogonal projection onto the finite-dimensional kernel
$K$ of $L_{\mE}$, and define the map
\[
    \mc N \defeq \nabla \widetilde{\mE} + \Pi_K .
\]
The following lemma provides the Lyapunov--Schmidt reduction.

\begin{lemma}[\cite{ColdingMinicozziUniqueness}*{Lemma~7.5}]
There exists an open neighborhood $\mc O \subseteq C^{\beta}\cap E$ of $0$ and a map
$\Theta\colon \mc O \to C^{2,\beta}\cap E$ with $\Theta(0)=0$ such that:
\begin{itemize}
    \item $\Theta\circ \mc N(h)=h$ and $\mc N\circ \Theta(h)=h$;
    \item $\|\Theta(h)\|_{C^{2,\beta}} \le C\|h\|_{C^{\beta}}$, and
    $\|\Theta(h_1)-\Theta(h_2)\|_{W^{2,2}} \le C\|h_1-h_2\|_{L^2}$;
    \item the function $\cf \defeq \widetilde{\mE}\circ \Theta$ is analytic.
\end{itemize}
Here $E$ is a closed subspace of $L^2$.
\end{lemma}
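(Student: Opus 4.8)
The statement is exactly the Lyapunov--Schmidt reduction of \cite{ColdingMinicozziUniqueness}*{Section~7} adapted to our functional, so the plan is to apply the analytic inverse function theorem in Banach spaces to the map $\mc N = \nabla\widetilde{\mE} + \Pi_K$ in a neighborhood of $0$. The two structural inputs are the Fredholm property of the linearization, already supplied by Theorem~\ref{Fredholm}, and the real-analytic dependence of $\nabla\widetilde{\mE}$ on its argument; once these are in place, $\Theta$ is simply the local inverse $\mc N^{-1}$, and all of the listed properties are formal consequences.

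First I would record that $\widetilde{\mE} = \mE\circ\exp$ is real-analytic near $0$: the chart $\exp$ of \cite{ColdingMinicozziUniqueness}*{Lemma~6.15} is analytic, and $\mE(g,w) = \int_N R_g\, w\, dV_g$ depends on $(g,w)$ through the scalar curvature $R_g$ — a rational expression in $g$, $g^{-1}$ and the first two derivatives of $g$ — and through the volume density, which is algebraic in $g$; hence $\nabla\widetilde{\mE}\colon C^{2,\beta}\cap E \to C^{\beta}\cap E$ is analytic. It vanishes at $0$ because $(g_N,1)$ is a critical point of $\mE|_{\mc G_1}$ (property~\ref{R2}), and its differential there is $L_{\mE}$ acting on $E$. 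Since $L_{\mE}$ is the second variation of $\mE$, it is symmetric in the $L^2$ pairing; together with Theorem~\ref{Fredholm} this shows that its finite-dimensional kernel $K$ (whose elements are smooth by elliptic regularity) coincides with its cokernel and that $L_{\mE}$ restricts to a Banach-space isomorphism on the $L^2$-orthogonal complement of $K$. Adding $\Pi_K$ supplies the missing kernel direction, so $D\mc N(0) = L_{\mE} + \Pi_K$ is an isomorphism $C^{2,\beta}\cap E \to C^{\beta}\cap E$.

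By the analytic inverse function theorem, $\mc N$ then admits a real-analytic local inverse $\Theta\colon \mc O \to C^{2,\beta}\cap E$ with $\Theta(0)=0$, $\Theta\circ\mc N = \id$ and $\mc N\circ\Theta = \id$ on a neighborhood of $0$; hence $\cf = \widetilde{\mE}\circ\Theta$ is analytic as a composition of analytic maps. The bound $\|\Theta(h)\|_{C^{2,\beta}}\le C\|h\|_{C^{\beta}}$ is the quantitative form of the inversion: writing $\mc N(k) = D\mc N(0)k + Q(k)$ with $Q$ analytic and vanishing to second order at $0$, one solves $\mc N(k)=h$ by the contraction $k\mapsto D\mc N(0)^{-1}(h-Q(k))$ on a small ball, and the Schauder estimate for the elliptic operator $L_{\mE}$ yields the claimed linear-in-$h$ control of $\|k\|_{C^{2,\beta}}$. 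For the Lipschitz estimate in the weaker topology one re-runs the same scheme in $L^2$: subtracting the fixed-point identities for $h_1,h_2$, applying the $L^2$ elliptic estimate for $L_{\mE}$ on $K^{\perp}$, and bounding $Q(\Theta(h_1))-Q(\Theta(h_2))$ in $L^2$ by a product of a $C^{2,\beta}$-bounded factor (controlled by the first estimate) and a $W^{2,2}$-small factor, one absorbs the nonlinear term for $\mc O$ small and obtains $\|\Theta(h_1)-\Theta(h_2)\|_{W^{2,2}}\le C\|h_1-h_2\|_{L^2}$.

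The one genuinely delicate point is this last step: the inverse function theorem naturally produces $\Theta$ as a map of Hölder spaces, whereas the Lipschitz bound is demanded in Sobolev norms, so one must run the perturbative inversion simultaneously in two function-space scales, using the Hölder bound on $\Theta$ only to tame the quadratic nonlinearity in the $L^2$ argument. This is exactly what \cite{ColdingMinicozziUniqueness}*{Lemma~7.5} carries out, and since the nontrivial components of $L_{\mE}$ coincide with those of $L_{\mR}$ in \cite{ColdingMinicozziUniqueness}, the argument transfers without change.
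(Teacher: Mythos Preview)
The paper does not supply its own proof of this lemma: it is quoted verbatim as \cite{ColdingMinicozziUniqueness}*{Lemma~7.5} and immediately used as input to Theorem~\ref{thm:LSineq-1}, the point being that, as remarked after Theorem~\ref{Fredholm}, the nontrivial components of $L_{\mE}$ coincide with those of $L_{\mR}$, so Colding--Minicozzi's argument applies unchanged. Your outline is an accurate summary of that argument (analytic inverse function theorem applied to $\mc N$, with $D\mc N(0)=L_{\mE}+\Pi_K$ invertible by the Fredholm/self-adjointness input, Schauder estimates for the $C^{2,\beta}$ bound, and the two-scale trick for the $W^{2,2}$ Lipschitz bound), and you correctly flag the only genuinely delicate step.
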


We now establish the \L{}ojasiewicz--Simon inequality for $\widetilde{\mE}$.

\begin{theorem}\label{thm:LSineq-1}
The functional $\widetilde{\mE}$ is well defined on a neighborhood
$\mc O_E$ of $0$ in $C^{2,\beta}\cap E$.
There exists a constant $\alpha\in(0,1)$ such that, for all sufficiently small
$h\in E$,
\begin{equation}\label{eq:LS-inequality}
    \bigl| \widetilde{\mE}(h)- \widetilde{\mE}(0)\bigr|^{2-\alpha}
    \le
    \|\nabla \widetilde{\mE}(h)\|_{L^2}^2 .
\end{equation}
\end{theorem}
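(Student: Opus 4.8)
The plan is to reduce the \L{}ojasiewicz--Simon inequality for $\widetilde{\mE}$ to the classical finite-dimensional \L{}ojasiewicz gradient inequality via the Lyapunov--Schmidt reduction recorded in the preceding lemma, together with the Fredholm property of $L_{\mE}$ from Theorem~\ref{Fredholm}, and then to transfer the resulting estimate back along $\Theta$. The first thing to record is that the origin is a critical point: since $(g_N,1)$ is a critical point of $\mE$ restricted to $\mc G_1$ (the corollary verifying \ref{R1}--\ref{R2}), and $\exp(0)=(g_N,1)$ with $\exp$ mapping $\mc T_1^0$ into $\mc G_1$, we have $\nabla\widetilde{\mE}(0)=0$; hence $\mc N(0)=\nabla\widetilde{\mE}(0)+\Pi_K(0)=0$, so $\Theta(0)=0$ and $\cf(0)=\widetilde{\mE}(\Theta(0))=\widetilde{\mE}(0)$, and differentiating $\cf=\widetilde{\mE}\circ\Theta$ at $0$ gives $\nabla\cf(0)=0$. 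Thus $0$ is a critical point of the analytic functional $\cf$.

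Next I would establish the \L{}ojasiewicz inequality for $\cf$. By Theorem~\ref{Fredholm} the kernel $K=\ker L_{\mE}$ is finite-dimensional, and the lemma guarantees that $\cf$ is real-analytic; moreover, from $\mc N=\nabla\widetilde{\mE}+\Pi_K$ and $\mc N\circ\Theta=\id$ one has the identity $\nabla\widetilde{\mE}(\Theta(h))=h-\Pi_K\Theta(h)$, which directly controls the component of the gradient transverse to $K$. Applying the classical \L{}ojasiewicz gradient inequality to the restriction of $\cf$ to (a neighborhood of $0$ in) the finite-dimensional space $K$, and combining with this identity, produces $\theta\in(0,\tfrac12]$ and $C\ge1$ such that $|\cf(h)-\cf(0)|^{1-\theta}\le C\,\|\nabla\cf(h)\|_{L^2}$ for $h$ small in $C^{\beta}\cap E$; squaring and shrinking the neighborhood so as to absorb $C$ while slightly decreasing $\theta$ gives
\[
    |\cf(h)-\cf(0)|^{2-\alpha}\le\|\nabla\cf(h)\|_{L^2}^{2},
    \qquad \alpha:=2\theta\in(0,1),
\]
for all sufficiently small $h$. (Alternatively one may cite Simon's infinite-dimensional \L{}ojasiewicz inequality for an analytic functional whose linearization at a critical point is Fredholm.)

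The transfer step goes as follows. The functional $\widetilde{\mE}=\mE\circ\exp$ is defined on a $C^{2,\beta}\cap E$-neighborhood of $0$ by the construction of $\exp$ in \cite{ColdingMinicozziUniqueness}*{Lemma~6.15}. Given $\mathbf h$ in that neighborhood, set $h:=\mc N(\mathbf h)$; since $\mc N$ is continuous with $\mc N(0)=0$, $h$ is small in $C^{\beta}\cap E$, and $\Theta(h)=\mathbf h$ by $\Theta\circ\mc N=\id$. The chain rule gives $\nabla\cf(h)=(D\Theta(h))^{*}\nabla\widetilde{\mE}(\mathbf h)$, and the Lipschitz estimate $\|\Theta(h_1)-\Theta(h_2)\|_{W^{2,2}}\le C\|h_1-h_2\|_{L^2}$ from the lemma yields $\|(D\Theta(h))^{*}\|_{L^2\to L^2}\le C$, hence $\|\nabla\cf(h)\|_{L^2}\le C\,\|\nabla\widetilde{\mE}(\mathbf h)\|_{L^2}$. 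Since $\cf(h)-\cf(0)=\widetilde{\mE}(\mathbf h)-\widetilde{\mE}(0)$, combining with the previous display gives $|\widetilde{\mE}(\mathbf h)-\widetilde{\mE}(0)|^{2-\alpha}\le C\,\|\nabla\widetilde{\mE}(\mathbf h)\|_{L^2}^{2}$, and the constant $C$ is absorbed exactly as above by shrinking the neighborhood and decreasing $\alpha$ slightly, which is \eqref{eq:LS-inequality}.

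The main obstacle is conceptual rather than computational: \L{}ojasiewicz's inequality is intrinsically finite-dimensional, so the entire argument rests on the Lyapunov--Schmidt reduction, which in turn depends on two facts secured earlier --- the real-analyticity of $\mE$ as a functional on Hölder spaces (the scalar curvature being a rational expression in the $2$-jet of the metric with non-vanishing denominator), and the Fredholm property of $L_{\mE}$ in Theorem~\ref{Fredholm}, which forces $\dim K<\infty$. The one genuinely delicate point is the bookkeeping in the transfer step: one must keep the $C^{\beta}$/$C^{2,\beta}$ topology on the ``base'' side, where analyticity and the implicit function theorem operate, separate from the $L^{2}$/$W^{2,2}$ topology on the ``gradient'' side, where the Hilbert inner product and the dual bound on $D\Theta$ live --- which is exactly why the preceding lemma records both the $C^{2,\beta}$--$C^{\beta}$ bound and the $W^{2,2}$--$L^{2}$ Lipschitz bound for $\Theta$.
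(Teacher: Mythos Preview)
Your overall strategy---Lyapunov--Schmidt reduction plus the finite-dimensional \L{}ojasiewicz inequality---is the same as the paper's, and your setup of $\mc N$, $\Theta$, $\cf$ is correct. However, your organization creates a genuine gap. You first claim a \L{}ojasiewicz inequality for $\cf$ on a full $C^{\beta}\cap E$-neighborhood of $0$, asserting that it follows from the finite-dimensional \L{}ojasiewicz inequality for $\cf|_K$ ``combined with the identity $\nabla\widetilde{\mE}(\Theta(h))=h-\Pi_K\Theta(h)$''. But that identity controls $\nabla\widetilde{\mE}$, not $\nabla\cf$, and extending \L{}ojasiewicz from $\cf|_K$ to $\cf$ on the ambient infinite-dimensional space is itself an infinite-dimensional \L{}ojasiewicz statement---not easier than the original one for $\widetilde{\mE}$. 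Your transfer step then compounds the problem: setting $h=\mc N(\mathbf h)$ gives the exact equality $\cf(h)=\widetilde{\mE}(\mathbf h)$, but $h$ need not lie in $K$, so you are forced to invoke precisely the infinite-dimensional inequality you have not established. (Your parenthetical appeal to Simon's theorem is a legitimate shortcut, but it bypasses the entire reduction and makes the rest of the argument redundant.)

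The paper avoids this by comparing $\widetilde{\mE}(h)$ not to $\cf(\mc N(h))$ but to $\cf(\Pi_K h)$, where $\Pi_K h\in K$ is finite-dimensional, so only the classical \L{}ojasiewicz inequality on $K$ is needed. Two quantitative lemmas from \cite{ColdingMinicozziUniqueness} do the work: Lemma~7.10 gives $\|\nabla\cf(\Pi_K h)\|_{L^2}\le C\|\nabla\widetilde{\mE}(h)\|_{L^2}$, and Lemma~7.15 gives $|\cf(\Pi_K h)-\widetilde{\mE}(h)|\le C\|\nabla\widetilde{\mE}(h)\|_{L^2}^2$. Then the chain
\[
C\|\nabla\widetilde{\mE}(h)\|_{L^2}^2
\ge \|\nabla\cf(\Pi_K h)\|_{L^2}^2
\ge |\nabla(\cf|_K)(\Pi_K h)|^2
\ge |\cf(\Pi_K h)-\widetilde{\mE}(0)|^{2-\alpha}
\]
together with the triangle inequality finishes the proof. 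Your argument becomes correct if you replace $\mc N(\mathbf h)$ by $\Pi_K\mathbf h$ in the transfer and supply these two comparison lemmas---which is exactly the paper's route.
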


\begin{proof}
Let $h\in E$ be sufficiently small. Then there exists a constant $C>0$ such that
\begin{align*}
    C\|\nabla \widetilde{\mE}(h)\|_{L^2}^2
    &\ge \|\nabla \cf(\Pi_K h)\|_{L^2}^2
        &&\text{by \cite{ColdingMinicozziUniqueness}*{Lemma~7.10}} \\
    &\ge |\nabla \cf_K(\Pi_K h)|^2 \\
    &\ge |\cf_K(\Pi_K h)-\cf_K(0)|^{2-\alpha}
        &&\text{by the \L{}ojasiewicz--Simon inequality} \\
    &= |\cf(\Pi_K h)- \widetilde{\mE}(0)|^{2-\alpha}.
\end{align*}
The desired inequality follows from the triangle inequality together with
\cite{ColdingMinicozziUniqueness}*{Lemma~7.15}, which gives
\[
    |\cf(\Pi_K h)- \widetilde{\mE}(h)|
    \le C\|\nabla \widetilde{\mE}(h)\|_{L^2}^2 .
\]
\end{proof}

Following the arguments in \cite{ColdingMinicozziUniqueness}*{Section~8}, we conclude
this section with the main result.

\begin{theorem}\label{thm:LSineq-2}
A \L{}ojasiewicz--Simon inequality for $\widetilde{\mE}$ implies one for $\mE$.
More precisely, there exists a neighborhood $\mc U_1$ of $(g_N,1)$ in $\mc G_1$
such that for all $y\in \mc U_1$,
\[
    |\mE(y)|^{2-\alpha}
    \le C \|\nabla_1 \mE(y)\|_{L^2}^2 .
\]
\end{theorem}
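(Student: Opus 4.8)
The plan is to deduce Theorem~\ref{thm:LSineq-2} from Theorem~\ref{thm:LSineq-1} by transferring the \L{}ojasiewicz--Simon inequality from the ``slice'' $\mc T_1^0$ back to the full space $\mc G_1$, exactly along the lines of \cite{ColdingMinicozziUniqueness}*{Section~8}. The only obstruction to a verbatim transfer is the presence of the finite-dimensional gauge group $\mc D$ acting on $\mc G_1$: a point $y \in \mc G_1$ near $(g_N,1)$ need not lie in the image of $\exp\colon \mc T_1^0 \to \mc G_1$, but it lies in the $\mc D$-orbit of such a point, and both $\mE$ and its gradient are equivariant under this action.

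First I would recall that, by the slice theorem in Section~6 together with the exponential map $\exp$ of \cite{ColdingMinicozziUniqueness}*{Lemma~6.15}, there is a neighborhood $\mc U_1$ of $(g_N,1)$ in $\mc G_1$ such that every $y \in \mc U_1$ can be written as $y = \varphi^\ast(\exp h)$ for some $\mc D$-element $\varphi$ close to the identity and some small $h \in \mc T_1^0$; moreover $\|h\|_{C^{2,\beta}}$ is controlled by the $C^{2,\beta}$-distance from $y$ to $(g_N,1)$. Since $\mE(\varphi^\ast g, \varphi^\ast w) = \mE(g,w)$ (the weighted Einstein--Hilbert functional is diffeomorphism invariant, as $R_{\varphi^\ast g}\circ\varphi = R_g\circ\varphi$ and the weighted volume measure is pulled back), we get $\mE(y) = \widetilde{\mE}(h)$, and likewise $\mE(y) - \mE(g_N,1) = \widetilde{\mE}(h) - \widetilde{\mE}(0)$. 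Then I would split $h = h_E + h_{E^\perp}$ according to the decomposition $\mc T_1^0 = E \oplus E^\perp$ used in Theorem~\ref{thm:LSineq-1}, where $E^\perp$ is the part on which $\widetilde{\mE}$ is (after the Lyapunov--Schmidt reduction) quadratically nondegenerate; a standard argument shows $\widetilde{\mE}$ restricted to the $E^\perp$-directions contributes only higher-order terms controlled by $\|\nabla\widetilde{\mE}(h)\|^2$, so \eqref{eq:LS-inequality} upgrades to $|\widetilde{\mE}(h) - \widetilde{\mE}(0)|^{2-\alpha} \le C\|\nabla\widetilde{\mE}(h)\|_{L^2}^2$ for all small $h \in \mc T_1^0$, not just $h \in E$.

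The final step is to compare the two gradients. The projected gradient $\nabla_1\mE(y)$ of $\mE$ on $\mc G_1$ is $\mc D$-equivariant, and $\nabla\widetilde{\mE}(h) = d(\exp)_h^\ast\bigl(\nabla_1\mE(\exp h)\bigr)$; since $d(\exp)_h$ is uniformly bounded with uniformly bounded inverse on the relevant transverse directions, and since $\nabla_1\mE$ vanishes in the gauge directions $\mc T_{\mc D}$ (Proposition, item~\eqref{item:Diffeo}), we obtain $\|\nabla\widetilde{\mE}(h)\|_{L^2} \le C\|\nabla_1\mE(\exp h)\|_{L^2} = C\|\nabla_1\mE(y')\|_{L^2}$ where $y' = \exp h$, and finally $\|\nabla_1\mE(y')\|_{L^2} = \|\nabla_1\mE(y)\|_{L^2}$ by equivariance. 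Chaining these inequalities gives $|\mE(y)|^{2-\alpha} = |\mE(y) - \mE(g_N,1)|^{2-\alpha} = |\widetilde{\mE}(h) - \widetilde{\mE}(0)|^{2-\alpha} \le C\|\nabla\widetilde{\mE}(h)\|_{L^2}^2 \le C\|\nabla_1\mE(y)\|_{L^2}^2$, which is the claim.

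I expect the main obstacle to be the bookkeeping around the gauge-fixing: one must verify that the slice decomposition $y = \varphi^\ast(\exp h)$ holds with uniform constants on a genuine $\mc G_1$-neighborhood of $(g_N,1)$, that the map $y \mapsto h$ is Lipschitz in the relevant norms, and that passing through the Lyapunov--Schmidt reduction $\Theta$ does not destroy these estimates. All of these are established in \cite{ColdingMinicozziUniqueness}*{Sections~6--8} for $L_{\mR}$, and since the nontrivial blocks of $L_{\mE}$ on $\mc T_{tt}$ and $\mc T_c^0$ agree with those of $L_{\mR}$ (as noted just before Theorem~\ref{Fredholm}), the arguments apply without change; the diffeomorphism-invariance of $\mE$, which is immediate from its definition \eqref{analogue-of-R}, is what makes the equivariance inputs available.
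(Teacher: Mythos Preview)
Your proposal is correct and follows exactly the approach the paper adopts, which is simply to invoke \cite{ColdingMinicozziUniqueness}*{Section~8}: gauge-fix via the slice theorem, use diffeomorphism invariance of $\mE$ to reduce to $\widetilde{\mE}$ on $\mc T_1^0$, and compare gradients through the bounded differential of $\exp$. Two small cosmetic points: the subspace $E$ in Theorem~\ref{thm:LSineq-1} is already (the $L^2$-closure of) $\mc T_1^0$, so your $E\oplus E^\perp$ splitting is unnecessary, and the identity $\|\nabla_1\mE(y')\|_{L^2} = \|\nabla_1\mE(y)\|_{L^2}$ under pullback by $\varphi$ should be an inequality with a constant close to $1$, since $\varphi$ is only $C^{3,\beta}$-close to the identity rather than an isometry.
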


\section{Decay of $\mA'$}\label{sec:Decay-of-A'}
The \L{}ojasiewicz--Simon inequality established in Theorem~\ref{thm:LSineq-2}
yields quantitative decay estimates for $\mA'$.

\begin{theorem}\label{thm:LStoDecay}
Under the assumptions of Theorem~\ref{interior-estimate}, there exist $\beta>0$, and constant $C=C(\delta, g_N)>0$, such that for all $t \in [t_i, t_i+L]$,
\[
    \bigl(-\mA'(t)\bigr)^{2-\alpha}
    \le
    C\, h_{[t-1,t+3]}
    =
    C\bigl(\mA'(t+3)-\mA'(t-1)\bigr).
\]
\end{theorem}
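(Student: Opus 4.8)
The plan is to obtain the \L{}ojasiewicz--Simon inequality for $-\mA'$ essentially formally, by chaining together the properties \ref{R1}--\ref{R5} of the approximate functional $\mE$ established in Section~\ref{sec:verificationR1-R5}. The mechanism is the following: $\mA'(t)$ and $\mE(g_t,|\nabla u|)$ differ only by an error controlled by $h_{[t-1,t+3]}$ (property \ref{R5}); the functional $\mE$ vanishes and is critical at $(g_N,1)$ (properties \ref{R1}, \ref{R2}) and satisfies a genuine \L{}ojasiewicz--Simon inequality $|\mE(y)|^{2-\alpha}\le C\|\nabla_1\mE(y)\|_{L^2}^2$ near $(g_N,1)$ (property \ref{R3}, i.e.\ Theorem~\ref{thm:LSineq-2}); and the gradient term at $(g_t,|\nabla u|)$ is itself bounded by $h_{[t-1,t+3]}$ (property \ref{R4}). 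Since \eqref{eq:A'-monotonicity} identifies $h_{[t-1,t+3]}=\tfrac12\bigl(\mA'(t+3)-\mA'(t-1)\bigr)$, this is precisely what the theorem asserts.

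First I would check that $(g_t,|\nabla u|)$ lies in the neighborhood $\mc U_1$ of $(g_N,1)$ on which Theorem~\ref{thm:LSineq-2} applies, for every $t\in[t_i,t_i+L]$. Recall that $(g_t,|\nabla u|)\in\mc G_1$ by the constancy of $\mS$ in \eqref{eq:constant-weighted-volume}, and that $\{u=t\}$ is diffeomorphic to $N$ for such $t$. Then, exactly as already used in the proof of Proposition~\ref{gradient-mE}, Anderson's convergence theorem \cite{AndersonConvergence} for the noncollapsed Ricci-flat tubes $T_{[t_i,t_i+L]}$ together with \eqref{eq:closeness-to-tube} and interior Schauder estimates for the harmonic function $u$ force $g_t$ to be $C^{2,\beta}$-close to $g_N$ and $|\nabla u|$ to be $C^{2,\beta}$-close to $1$, uniformly in $t$, once $\delta$ is small and $i$ is large; in particular $(g_t,|\nabla u|)\in\mc U_1$. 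With this in place, using that $-\mA'(t)\ge 0$ by Proposition~\ref{prop:A'-Monotone} and the elementary inequality $(a+b)^{2-\alpha}\le 2^{1-\alpha}(a^{2-\alpha}+b^{2-\alpha})$, I would write
\begin{align*}
\bigl(-\mA'(t)\bigr)^{2-\alpha}
&\le 2^{1-\alpha}\Bigl(\bigl|\mA'(t)-\mE(g_t,|\nabla u|)\bigr|^{2-\alpha}
+\bigl|\mE(g_t,|\nabla u|)\bigr|^{2-\alpha}\Bigr)\\
&\le C\,h_{[t-1,t+3]}^{\,2-\alpha}
+C\,\bigl\|\nabla_1\mE(g_t,|\nabla u|)\bigr\|_{L^2}^2
\le C\,h_{[t-1,t+3]}^{\,2-\alpha}+C\,h_{[t-1,t+3]},
\end{align*}
where the middle step applies \ref{R5} to the first term and \ref{R3} to the second, and the last step applies \ref{R4}.

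It then remains only to absorb $h_{[t-1,t+3]}^{\,2-\alpha}$ into $h_{[t-1,t+3]}$: since $2-\alpha>1$ and $h_{[t-1,t+3]}$ is uniformly bounded, and indeed can be made as small as we like by taking $\delta$ small and $i$ large (as noted at the beginning of Section~\ref{subsec:GHbound}), one has $h_{[t-1,t+3]}^{\,2-\alpha}=h_{[t-1,t+3]}^{\,1-\alpha}\cdot h_{[t-1,t+3]}\le C(\delta)\,h_{[t-1,t+3]}$. Combining and invoking \eqref{eq:A'-monotonicity} yields
\[
\bigl(-\mA'(t)\bigr)^{2-\alpha}\le C(\delta,g_N)\,h_{[t-1,t+3]}=C(\delta,g_N)\bigl(\mA'(t+3)-\mA'(t-1)\bigr),
\]
with the factor $\tfrac12$ absorbed into $C$; the exponent $\beta>0$ referenced in the statement is the one to be extracted from $\alpha$ in the ODE argument of Theorem~\ref{thm:decay-of-Hess-L2}. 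The only genuinely nontrivial input here is the verification that the pair $(g_t,|\nabla u|)$ enters the fixed \L{}ojasiewicz--Simon neighborhood $\mc U_1$ with sufficient uniformity — that is, upgrading the Gromov--Hausdorff/$C^0$ closeness of the tubes and of $u$ to $b_\gamma$ into $C^{2,\beta}$ closeness of $(g_t,|\nabla u|)$ to $(g_N,1)$ — which relies on Anderson's $\varepsilon$-regularity theorem for noncollapsed Ricci-flat spaces and elliptic regularity for $u$, both already in play in the preceding sections; everything else is the purely formal chaining above.
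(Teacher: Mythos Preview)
Your proposal is correct and follows essentially the same approach as the paper: chain \ref{R5}, \ref{R3}, and \ref{R4} together via the elementary inequality $(a+b)^{2-\alpha}\le 2^{1-\alpha}(a^{2-\alpha}+b^{2-\alpha})$, then absorb $h^{2-\alpha}$ into $h$ using the uniform smallness of $h_{[t-1,t+3]}$. If anything, your write-up is slightly more explicit than the paper's in flagging the need to verify that $(g_t,|\nabla u|)\in\mc U_1$ before invoking Theorem~\ref{thm:LSineq-2}, which the paper leaves implicit.
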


\begin{proof}
On the one hand,
\[
    -\mA'(t)
    =
    -\mA'(t)+\mA'(\infty)
    =
    -\mA'(t)+\mE(g_N,1)
    \le
    -\mE\bigl(g_t,|\nabla u|(t)\bigr)
    + C\, h_{[t-1,t+3]},
\]
where we used property~\ref{R5}.
On the other hand, by Theorem~\ref{thm:LSineq-2},
\[
    \bigl|\mE\bigl(g_t,|\nabla u|(t)\bigr)\bigr|^{2-\alpha}
    \le
    \bigl\|\nabla_1 \mE\bigl(g_t,|\nabla u|(t)\bigr)\bigr\|_{L^2}^2
    \le
    C\, h_{[t-1,t+3]} 
\]
Combining the two estimates and using the elementary inequality
(valid for $a,b>0$ and $2-\alpha>1$)
\[
    (a+b)^{2-\alpha}
    \le
    2^{1-\alpha}\bigl(a^{2-\alpha}+b^{2-\alpha}\bigr),
\]
we obtain
\[
    \bigl(-\mA'(t)\bigr)^{2-\alpha}
    \le
    C\Bigl(
        h_{[t-1,t+3]} + h_{[t-1,t+3]}^{\,2-\alpha}
    \Bigr)
    \le
    C\, h_{[t-1,t+3]} .
\]
Finally, since $h_{[t-1,t+3]}$ is uniformly bounded, this yields
\[
    \bigl(-\mA'(t)\bigr)^{2-\alpha}
    \le
    \bigl(\mA'(t+3)-\mA'(t-1)\bigr),
\]
as claimed.
\end{proof}

Now we are ready to establish the decay estimate stated in Section~\ref{subs:main-arguments}.

\begin{proof}[Proof of Theorem~\ref{thm:decay-of-Hess-L2}]
For convenience, define
\[
    \mB(t)\coloneqq -\mA'(t)\ge 0 .
\]
By construction, $\mB$ is a non-increasing function.

Theorem~\ref{thm:LStoDecay} yields
\[
    (\mB(t+3))^{2-\alpha}
    \le
    (\mB(t))^{2-\alpha}
    \le
    C\, h_{[t-1,t+3]}
    =
    C\bigl(\mB(t-1)-\mB(t+3)\bigr).
\]
Applying \cite{ColdingMinicozziUniqueness}*{Lemma~2.42}, we obtain the quantitative increment estimate
\[
    \mB^{\alpha-1}(t+3)-\mB^{\alpha-1}(t-1)\ge C>0 .
\]
Iterating this inequality gives
\[
    \mB^{\alpha-1}(t^*)
    \ge
    \mB^{\alpha-1}(t_i)+C\,(t^*-t_i)
    \ge
    C\,(t^*-t_i),
\]
and hence
\begin{equation}\label{eq:desired-decay-of-A'}
  \mB(t^*)
    \le
    C\,(t^*-t_i)^{\frac{1}{\alpha-1}}
    =
    C\,(t^*-t_i)^{-\beta-1}.
\end{equation}
This completes the proof.
\end{proof}

\part{The existence of harmonic functions with linear growth}

\section{The existence of harmonic functions with linear growth}\label{sec:existence}
In this section, we prove Theorem~\ref{thm:Existence-of-linear-growth-harmonic-function},
following the strategy outlined in the introduction.

Throughout the proof, we work with level-set tubes of the Busemann function, denoted by $\mT_{[a,b]}$.
In our setting, these sets are comparable to geodesic balls in a quantitative sense. By the volume convergence theorem of Colding~\cite{Colding97},
the limit space $\overline N$ carries the $n$-dimensional Hausdorff measure $\haus^n$ associated with the limit metric. Moreover, this measure splits as the product of the Lebesgue measure $\mL^1$ on the $\R$-factor and the $(n-1)$-dimensional Hausdorff measure $\haus^{n-1}$
on $N$. Accordingly, we write $dt$ for integration with respect to $\mL^1$ on the
$\R$-factor.

\begin{theorem}[Harmonic approximation/replacement]\label{harmonic-replacement}
    For any $L>0$, let $f$ be a harmonic function on $(0,L)\times N$. Then for any $R_i\to \infty$, there exist harmonic functions $f_i$ on $\mT_{(R_i,R_i+L)}$ such that $f_i$ strongly converge to $f$ in $H^{1,2}$ on $(0,L)\times N$. 
\end{theorem}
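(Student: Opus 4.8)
The plan is harmonic replacement: on each tube $\mT_{(R_i,R_i+L)}$ we solve a Dirichlet problem whose boundary data converges to $f$, and then pass to the limit using the stability of the Sobolev space $H^{1,2}$ and of Dirichlet energy minimizers under measured Gromov--Hausdorff convergence. By \eqref{eq:uniform-cylinder} and the volume convergence theorem \cite{Colding97} recalled above, the tubes $(\mT_{(R_i,R_i+L)},\meas_i)$, with $\meas_i$ the Riemannian measure renormalized by $\haus^{n-1}(\{b_\gamma=R_i\})$, converge in the pointed measured Gromov--Hausdorff sense to $\bigl((0,L)\times N,\ \mL^1\otimes\haus^{n-1}\bigr)$, and by \cite{Zhu2025} the Busemann level sets $\{b_\gamma=R_i\}$ and $\{b_\gamma=R_i+L\}$ converge respectively to the slices $\{0\}\times N$ and $\{L\}\times N$. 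Fixing realizations in a common proper metric space, we speak of $L^2$- and $H^{1,2}$-convergence of functions and use freely the compactness of $H^{1,2}$ and the Mosco convergence of Cheeger energies under such convergence (see e.g. \cite{Ambrosio-Honda2018}). We may assume $f\in H^{1,2}((0,L)\times N)$; this is automatic for the coordinate function $r$ used in the application, and the general case follows by exhausting $(0,L)$ by compactly contained subintervals.

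Two auxiliary facts, flagged in the introduction, are needed. \emph{Positivity of the first Dirichlet eigenvalue of $(0,L)\times N$:} as $N$ is compact, integrating the one-dimensional Poincar\'e inequality $\int_0^L\varphi^2\,dt\le(L/\pi)^2\int_0^L(\varphi')^2\,dt$ for $\varphi(0)=\varphi(L)=0$ over $N$ gives $\|v\|_{L^2}^2\le(L/\pi)^2\|\nabla v\|_{L^2}^2$ for all $v\in H^{1,2}_0((0,L)\times N)$, so $\lambda_1>0$. \emph{The identification $H^{1,2}_0=\widehat H^{1,2}_0$}, equivalently the Mosco convergence of the zero-boundary Dirichlet energies of the tubes to that of $(0,L)\times N$: the inclusion $H^{1,2}_0\subseteq\widehat H^{1,2}_0$ is standard (every Lipschitz function compactly supported in $(0,L)\times N$ is a strong $H^{1,2}$-limit of functions in $H^{1,2}_0(\mT_{(R_i,R_i+L)})$, obtained by transporting through the Gromov--Hausdorff maps and a cutoff near $\partial\mT_{(R_i,R_i+L)}$). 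For the converse, suppose $v_i\in H^{1,2}_0(\mT_{(R_i,R_i+L)})$ with $\sup_i(\|v_i\|_{L^2}+\|\nabla v_i\|_{L^2})<\infty$ and $v_i\to v$ in $L^2$. Extending each $v_i$ by zero yields a function in $H^{1,2}(\mT_{(R_i-1,R_i+L+1)})$ with the same energy, since membership in $H^{1,2}_0$ makes the zero extension admissible on any larger open set; by \eqref{eq:uniform-cylinder} applied with length $L+2$, the enlarged tube converges to $(-1,L+1)\times N$ with its outer collar corresponding, by \cite{Zhu2025}, to $\{t<0\}\cup\{t>L\}$. Passing to the limit, $v$ extended by zero lies in $H^{1,2}((-1,L+1)\times N)$ and vanishes on the collars $(-1,0)\times N$ and $(L,L+1)\times N$, so it has zero trace on $\{0,L\}\times N$ and $v\in H^{1,2}_0((0,L)\times N)$. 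The same zero-extension plus lower semicontinuity of energy gives the uniform bound $\liminf_i\lambda_1(\mT_{(R_i,R_i+L)})\ge\lambda_1((0,L)\times N)>0$, so the Poincar\'e constants on the tubes are bounded uniformly in $i$.

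Now the replacement. By the stability of $H^{1,2}$, choose $\hat f_i\in H^{1,2}(\mT_{(R_i,R_i+L)})$ converging to $f$ strongly in $H^{1,2}$, and let $f_i$ be the unique minimizer of $w\mapsto\int_{\mT_{(R_i,R_i+L)}}|\nabla w|^2\,d\meas_i$ over $\{w:\ w-\hat f_i\in H^{1,2}_0(\mT_{(R_i,R_i+L)})\}$; existence and uniqueness follow from strict convexity and the uniform Poincar\'e inequality above, and the Euler--Lagrange equation shows $f_i$ is harmonic on $\mT_{(R_i,R_i+L)}$. Since $\int|\nabla f_i|^2\le\int|\nabla\hat f_i|^2$ is bounded and the uniform Poincar\'e inequality applied to $f_i-\hat f_i$ bounds $\|f_i\|_{L^2}$, after passing to a subsequence $f_i\to f_\infty$ strongly in $L^2$ and weakly in $H^{1,2}$. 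The functions $f_i-\hat f_i\in H^{1,2}_0(\mT_{(R_i,R_i+L)})$ converge to $f_\infty-f$, so by the identification above $f_\infty-f\in H^{1,2}_0((0,L)\times N)$; and by Mosco convergence of the zero-boundary Dirichlet energies, the limit of the minimizers is the minimizer of the limiting problem, i.e.\ $f_\infty$ minimizes Dirichlet energy over $\{w:\ w-f\in H^{1,2}_0\}$. As $f$ is harmonic, Dirichlet's principle identifies it as that unique minimizer, hence $f_\infty=f$; in particular the full sequence converges. Finally, $\liminf_i\int|\nabla f_i|^2\ge\int|\nabla f|^2$ by lower semicontinuity, while $\int|\nabla f_i|^2\le\int|\nabla\hat f_i|^2\to\int|\nabla f|^2$, so the Dirichlet energies converge; combined with weak convergence and the Mosco convergence of Cheeger energies this upgrades to strong convergence $f_i\to f$ in $H^{1,2}$.

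The main obstacle is the identification $H^{1,2}_0=\widehat H^{1,2}_0$: showing that an $H^{1,2}$-limit of functions vanishing on the tube boundaries genuinely vanishes on the slices $\{0,L\}\times N$ of the limit cylinder. This is where the geometry enters, via the zero-extension trick together with the fact from \cite{Zhu2025} that the Busemann level sets converge precisely to the coordinate slices (which carry positive $\haus^{n-1}$-measure). Everything else---the Poincar\'e inequality on the cylinder, Dirichlet's principle, the uniform Poincar\'e constants, and the Mosco convergence of Cheeger energies---is either elementary or a standard consequence of $\RCD$ stability theory.
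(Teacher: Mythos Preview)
Your overall architecture matches the paper's: reduce to the two hypotheses of \cite{Ambrosio-Honda2018}*{Theorem~4.8}---positivity of the first Dirichlet eigenvalue on $(0,L)\times N$ and the identification $H^{1,2}_0=\widehat H^{1,2}_0$---and then run harmonic replacement. Your treatment of the eigenvalue is identical to the paper's (integrate the one-dimensional Poincar\'e inequality over $N$), and you additionally spell out the Dirichlet-minimizer and Mosco-convergence argument that the paper simply black-boxes to \cite{Ambrosio-Honda2018}*{Corollary~4.12}.

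The substantive difference, and a gap, is in the second hypothesis. The paper proves $H^{1,2}_0((0,L)\times N)=\widehat H^{1,2}_0((0,L)\times N)$ \emph{entirely on the limit space}: given $u\in H^{1,2}(\overline N)$ vanishing a.e.\ outside $(0,L)\times N$, they multiply by a one-dimensional cutoff $\eta_\delta(t)$ and use the slice-wise Poincar\'e inequality (with zero trace at $t=0,L$) to control the bad term $\int|\eta_\delta'|^2|u|^2$, showing $\eta_\delta u\to u$ in $H^{1,2}$. Your argument instead reformulates the identification as a Mosco statement and uses a zero-extension trick on the approximating tubes; but at the end, having shown that the limit $v$ (extended by zero) lies in $H^{1,2}((-1,L+1)\times N)$ and vanishes on the collars, you write ``so it has zero trace on $\{0,L\}\times N$ and $v\in H^{1,2}_0((0,L)\times N)$''. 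That last implication is precisely the statement $\widehat H^{1,2}_0\subseteq H^{1,2}_0$ you set out to prove, and you have not proved it---you have only reduced the Mosco formulation back to the intrinsic one. In this setting $N$ is merely a noncollapsed $\RCD(0,n-1)$ space, so invoking ``zero trace'' is not a free move; what actually works is the paper's explicit cutoff-plus-Fubini argument exploiting the product structure $\bR\times N$. Once you insert that argument (or cite it), your proof is complete and in fact more self-contained than the paper's, since you unpack the replacement step rather than quoting Ambrosio--Honda.
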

\begin{proof}
 The proof can be adapted from \cite{Ambrosio-Honda2018}*{Corollary 4.12}.
It suffices to verify that the following two conditions in
\cite{Ambrosio-Honda2018}*{Theorem 4.8} are satisfied:
\begin{enumerate}
  \item\label{item:Dirichlet} The first Dirichlet eigenvalue $\lambda^{D}_{1}((0,L)\times N)$ is strictly positive.
  \item\label{item:Inner-Outer-Sobolev-Space} The equality
  \[
  H^{1,2}_0((0,L)\times N)=\widehat H^{1,2}_0((0,L)\times N)
  \]
  holds, where $H^{1,2}_0((0,L)\times N)$ denotes the closure of
  $\mathrm{Lip}_c((0,L)\times N)$ in $H^{1,2}(\mathbb R\times N)$, and
  $\widehat H^{1,2}_0((0,L)\times N)$ is the subspace of
  $H^{1,2}(\bar N)$ consisting of functions that vanish
  almost everywhere outside $(0,L)\times N$.
\end{enumerate}

\textbf{\eqref{item:Dirichlet}:} Recall that the first Dirichlet eigenvalue is defined through the Rayleigh
quotient
\[
\lambda_1^{D}
:= \inf\left\{
\frac{\int |\nabla u|^2  d\haus^n}{\int u^2 d\haus^n}
:
u\in H^{1,2}_0((0,L)\times N), u\not\equiv 0
\right\}.
\]

The sharp one-dimensional Poincar\'{e} inequality yield
\[
\int_0^L |u(t,x)|^2 \, dt
\le \frac{L^2}{\pi^2}
\int_0^L |\partial_t u(t,x)|^2 \, dt .
\]
Integrating the above inequality over $x\in N$ and applying Fubini's theorem,
we obtain
\[
\int_{(0,L)\times N} u^2  d\haus^n
\le \frac{L^2}{\pi^2}
\int_{(0,L)\times N} |\partial_t u|^2  d\haus^n .
\]
Combining with the fact that
\[
|\nabla u|^2 \ge |\partial_t u|^2
\]
yields
\[
\int_{(0,L)\times N} u^2  d\haus^n
\le \frac{L^2}{\pi^2}
\int_{(0,L)\times N} |\nabla u|^2  d\haus^n,
\]
which verifies \eqref{item:Dirichlet}.

\textbf{\eqref{item:Inner-Outer-Sobolev-Space}:} For convenience, set
\[
\Omega_L := (0,L)\times N.
\]

The inclusion $H^{1,2}_0(\Omega_L)\subset \widehat H^{1,2}_0(\Omega_L)$ is immediate from
the definitions. We prove the reverse inclusion.
Fix $u\in \widehat H^{1,2}_0(\Omega_L)$, i.e. $u\in H^{1,2}(\bar N)$ and $u=0$ a.e. outside $\Omega_L$.

Since Lipschitz functions are dense in $H^{1,2}(\bar N)$, there exists $u_i\in \mathrm{Lip}(\bar N)\cap H^{1,2}(\bar N)$ such that
\[
\|u_i-u\|_{H^{1,2}}\to 0 \qquad\text{as }i\to\infty.
\]

For $\delta\in(0,L/4)$ choose $\eta_\delta\in \mathrm{Lip}(\bR)$ such that
\[
0\le \eta_\delta\le 1,\qquad
\eta_\delta\equiv 1 \text{ on }[\delta,L-\delta],\qquad
\eta_\delta\equiv 0 \text{ on }(-\infty,\delta/2]\cup [L-\delta/2,\infty),
\]
and $|\eta_\delta'|\le C/\delta$.
Extend $\eta_\delta$ to $\bar N$ by $\eta_\delta(t,y):=\eta_\delta(t)$.
Then for each $i,\delta$,
\[
v_{i,\delta}:=\eta_\delta\,u_i \in \mathrm{Lip}_c(\Omega_L),
\]
because $\supp(\eta_\delta)\subset (\delta/2,L-\delta/2)$ and $N$ is compact. Hence
\[
\|v_{i,\delta}-\eta_\delta u\|_{H^{1,2}(\bar N)}
=
\|\eta_\delta(u_i-u)\|_{H^{1,2}(\bar N)}
\to 0
\qquad\text{as }i\to\infty.
\]
Therefore, it suffices to prove
\[
\|\eta_\delta u-u\|_{H^{1,2}(\bar N)}\to 0
\qquad (\delta\downarrow 0).
\]

By the Fubini property of Sobolev functions on the product $\bR\times N$,
for a.e.\ $x\in N$ the slice
\[
u_x(t):=u(t,x)
\]
belongs to the classical $H^{1,2}(\mathbb R)$ and satisfies $u_x(t)=0$ for a.e.\ $t\notin(0,L)$.
In particular $u_x$ has zero trace at $t=0$ and $t=L$. Moreover, the $t$-derivative $\partial_t u$ exists in the weak sense and
\[
\int_{\bar N} |\partial_t u|^2d\haus^n <\infty.
\]

For a fixed slice $u_x\in H^{1,2}(\mathbb R)$ with $u_x=0$ a.e. on $\mathbb R\setminus(0,L)$,
we estimate (writing $\tilde\eta_\delta:=1-\eta_\delta$)
\[
\|\tilde\eta_\delta u_x\|_{H^{1,2}(\bR)}^2
\le
2\|\tilde\eta_\delta u_x\|_{L^2(\mathbb R)}^2
+2\|\partial_t(\tilde\eta_\delta u_x)\|_{L^2(\bR)}^2.
\]
First,
\[
\|\tilde\eta_\delta u_x\|_{L^2(\bR)}^2
\le
\int_0^\delta |u_x|^2dt + \int_{L-\delta}^L |u_x|^2dt \xrightarrow[\delta\downarrow 0]{} 0
\]
for a.e. $x$, and dominated convergence yields convergence after integrating in $x$.

For the derivative term,
\[
\partial_t(\tilde\eta_\delta u_x)=\tilde\eta_\delta\,u_x' + \tilde\eta_\delta' \,u_x.
\]
Hence
\[
\|\partial_t(\tilde\eta_\delta u_x)\|_{L^2(\bR)}^2
\le
2\int_0^\delta |u_x'|^2dt + 2\int_{L-\delta}^L |u_x'|^2dt
+2\int_0^\delta |\eta_\delta'|^2 |u_x|^2dt
+2\int_{L-\delta}^L |\eta_\delta'|^2 |u_x|^2dt.
\]
The first two terms clearly go to $0$ as $\delta\downarrow 0$, for a.e. $x$,
and again after integrating in $x$ by dominated convergence since $u_x'\in L^2(0,L)$.

For the cutoff-gradient terms, use the 1D Poincar\'e inequality on $(0,\delta)$ with
the zero trace at $0$:
\[
\int_0^\delta |u_x(t)|^2dt \le \delta^2 \int_0^\delta |u_x'(t)|^2dt,
\]
and similarly near $L$ (using the zero trace at $L$):
\[
\int_{L-\delta}^L |u_x(t)|^2dt \le \delta^2 \int_{L-\delta}^L |u_x'(t)|^2dt.
\]
Since $|\eta_\delta'|\le C/\delta$, we obtain
\[
\int_0^\delta |\eta_\delta'|^2 |u_x|^2dt
\le
\frac{C^2}{\delta^2}\int_0^\delta |u_x|^2dt
\le
C^2\int_0^\delta |u_x'|^2\,dt \xrightarrow[\delta\downarrow 0]{} 0,
\]
and likewise
\[
\int_{L-\delta}^L |\eta_\delta'|^2 |u_x|^2dt
\le
C^2\int_{L-\delta}^L |u_x'|^2dt \xrightarrow[\delta\downarrow 0]{} 0.
\]
Integrating these inequalities over $x\in N$ yields
\[
\|\eta_\delta u-u\|_{L^2}\to 0
\quad\text{and}\quad
\|\partial_t(\eta_\delta u-u)\|_{L^2}\to 0.
\]
Finally, since $\eta_{\delta}$ has no contribution on the tangential gradient,
this implies $\|\eta_\delta u-u\|_{H^{1,2}(\bar N)}\to 0$ as $\delta\downarrow 0$.

\end{proof}

\begin{remark}
   For the counterexample to condition~\textbf{\eqref{item:Inner-Outer-Sobolev-Space}}
constructed in \cite{Ambrosio-Honda2018}*{Example~1.1},
\begin{align*}
    \left([0,+\infty), \dist_{\mathrm{eucl}}, s, \mL^1\right)
    \xrightarrow[]{\mathrm{mGH}}
    \left([0,+\infty), \dist_{\mathrm{eucl}}, \pi/4, \mL^1\right)
\end{align*}
as $s \uparrow \pi/4$, the ambient boundary point $0$ may become an interior point of
$B_{\pi/4}(\pi/4-\varepsilon)$.
In particular, the trace at $0$ of a function
$u \in \widehat{H}^{1,2}_0\bigl(B_{\pi/4}(\pi/4-\varepsilon)\bigr)$
is not necessarily zero.
As a consequence, the approximation property
\[
\|\eta_\delta u - u\|_{H^{1,2}} \to 0
\qquad (\delta \downarrow 0)
\]
no longer holds.

\end{remark}
We introduce the following $L^2$ norm on the tube $\mc T_{[a,b]}$ for measurable function $f(t,x)$:
\[
\|f\|_{a,b}=\int_a^b \int_N |f(t,x)|^2\,d\haus^{n-1}dt.
\]

Let $0\le \mu_0<\mu_1\le\cdots\le \mu_j\le\cdots $ be the eigenvalue of $-\Delta_N$, where $\Delta_N$ is the Laplacian on the cross section $N$. Let $\phi_j$ be the eigenfunction corresponding to $\mu_i$, i.e.\, $\Delta_N\phi_j=-\mu_j\phi_j$ and $\{\phi_i\}_{i=0}^\infty$ form a $L^2$ northonormal basis on $N$. In particular $\phi_0=\haus^{n-1}(N)^{-1/2}$. Exactly the same as the conical case \cite{HuangHarmonic}*{Theorem 3.1}, by spectral theory and separation of variables, we have the following $H^{1,2}_{\mr{loc}}$ and locally uniformly convergent series representation of a harmonic function $\ou$ on  $\overline N$ as follows:
\[
 \ou=(a_0 r +\tilde a_0)\phi_0+\sum_{i=1}^\infty \left( a_{i}^+e^{\sqrt{\mu_i}r}+a_{i}^-e^{-\sqrt{\mu_i}r}\right)\phi_i,
\]
where $a_0$, $\tilde a_0$, $a_i^{\pm}$ are constants.
\begin{theorem}\label{three-circles-theorem}
Suppose that $\bigcup_{j=1}^{3}\mc {T}_{[t_jL,(t_j+1)L]}\subset \overline{N}$, $t_j\in \bN_0$, $t_1<t_2<t_3$ and $\ou$ is a harmonic function on $\bar{N}$ of the form 
\begin{align*}
    \ou=a_0 r \phi_0+\sum_{i=1}^\infty \left( a_{i}^+e^{\sqrt{\mu_i}r}+a_{i}^-e^{-\sqrt{\mu_i}r}\right)\phi_i.
\end{align*}

For fixed $0<\beta<\sqrt \mu_1$ and $L>>1$ satisfying $e^{2(\sqrt \mu_1-\beta)L}>2$, we have 
\begin{equation}\label{balanced-version-three-circles-theorem}
    \|\ou\|_{t_2L,(t_2+1)L}\leq e^{-\beta' L}\left( \|\ou\|_{t_1L,(t_1+1)L}+ \|\ou\|_{t_3L,(t_3+1)L}\right),
\end{equation}
where 
\begin{align}\label{restriction-of-beta}
    \beta'< \min \left\{\beta, \frac{1}{2}\log \left(\frac{t^2_3+L t_3+\frac{L^2}{3}}{t^2_2+L t_2+\frac{L^2}{3}}\right)\right\}.
\end{align} 
\end{theorem}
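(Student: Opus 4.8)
The plan is to reduce the inequality to a statement about individual Fourier modes and then prove it mode by mode. Expand $\ou=a_0 r\phi_0+\sum_{i\ge 1}(a_i^+e^{\sqrt{\mu_i}r}+a_i^-e^{-\sqrt{\mu_i}r})\phi_i$. Since the $\phi_i$ are $L^2$-orthonormal on $N$ and the tube $\mc T_{[tL,(t+1)L]}$ carries the product measure $dt\otimes d\haus^{n-1}$, the norm splits orthogonally:
\[
\|\ou\|_{tL,(t+1)L}^2=a_0^2\int_{tL}^{(t+1)L}r^2\,dr+\sum_{i\ge 1}\int_{tL}^{(t+1)L}\bigl(a_i^+e^{\sqrt{\mu_i}r}+a_i^-e^{-\sqrt{\mu_i}r}\bigr)^2\,dr.
\]
Because the square root is subadditive, it suffices to prove, for \emph{each} mode separately, that the middle-annulus norm is bounded by $e^{-\beta'L}$ times the sum of the two outer-annulus norms. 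For the sum over $i\ge1$ one uses $\sqrt{\sum(\,\cdot\,)}\le\sum\sqrt{(\,\cdot\,)}$ together with a uniform bound on $e^{-\beta' L}$; the zero mode is handled on its own. So the problem decouples into (a) the linear mode $r\phi_0$, and (b) a single exponential mode $a_i^+e^{\sqrt{\mu_i}r}+a_i^-e^{-\sqrt{\mu_i}r}$ with $\mu_i\ge\mu_1$.

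For the zero mode, set $F(t)\defeq\int_{tL}^{(t+1)L}r^2\,dr=L\bigl(t^2L^2+tL^3+\tfrac{L^3}{3}\bigr)$ — more precisely $F(t)=L^3\bigl(t^2+t+\tfrac13\bigr)$ after expanding — wait, carefully: $\int_{tL}^{(t+1)L}r^2\,dr=\tfrac{1}{3}\bigl((t+1)^3-t^3\bigr)L^3=L^3\bigl(t^2+t+\tfrac13\bigr)$. Rescaling $t\mapsto t$, one checks $F(t_2)\le e^{-2\beta'L}\bigl(F(t_1)+F(t_3)\bigr)$ is implied by $F(t_2)\le e^{-2\beta'L}F(t_3)$, i.e.\ $e^{2\beta'L}\le F(t_3)/F(t_2)=\bigl(t_3^2+t_3+\tfrac13\bigr)/\bigl(t_2^2+t_2+\tfrac13\bigr)$; rewriting in terms of $Lt_j$ gives exactly the second term in the minimum \eqref{restriction-of-beta} (with the $L^2$-normalizations matching $t^2_j+Lt_j+\tfrac{L^2}{3}$ once one clears the overall factor $L$ and relabels). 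This handles mode (b) for $i=0$.

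For a genuine exponential mode, write $v(r)=a^+e^{\sqrt{\mu}r}+a^-e^{-\sqrt{\mu}r}$ with $\mu\ge\mu_1$ and $\lambda\defeq\sqrt{\mu}$. The elementary but slightly technical heart of the argument: since the three annuli are at distances at least $L$ apart (consecutive $t_j$ differ by at least $1$, so by at least $L$ in the $r$-variable), one has the convexity-type estimate
\[
\int_{t_2L}^{(t_2+1)L}v^2\le e^{-2\lambda L}\int_{t_1L}^{(t_1+1)L}v^2+e^{-2\lambda L}\int_{t_3L}^{(t_3+1)L}v^2
\]
whenever $e^{2\lambda L}\ge 2$ — which holds since $\lambda\ge\sqrt{\mu_1}>\beta$ and $e^{2(\sqrt{\mu_1}-\beta)L}>2$ gives $e^{2\lambda L}>2e^{2\beta L}\ge2$. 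This is proved by the standard trick: split $v^2=(a^+)^2e^{2\lambda r}+(a^-)^2e^{-2\lambda r}+2a^+a^-$; the cross term integrates to a constant multiple of the annulus width $L$ (same for all three annuli), the $e^{2\lambda r}$ term is dominated using the rightmost annulus $t_3$, the $e^{-2\lambda r}$ term using the leftmost annulus $t_1$, and the constant cross term is absorbed because $1\le\tfrac12 e^{2\lambda L}\le\tfrac12(e^{2\lambda(r_3-r_2)}+e^{-2\lambda(r_1-r_2)})$ on the relevant ranges. Since $\lambda\ge\sqrt{\mu_1}>\beta>\beta'$, we get the factor $e^{-2\beta'L}$ with room to spare, uniformly in $i$.

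The main obstacle is bookkeeping the two competing constraints on $\beta'$: the exponential modes force $\beta'<\beta<\sqrt{\mu_1}$, while the polynomially-growing zero mode only decays like $F(t_3)/F(t_2)$, which is where the second term of the minimum \eqref{restriction-of-beta} comes from. One must verify that with the stated normalization $t^2_j+Lt_j+\tfrac{L^2}{3}$ (coming from $\int_{t_jL}^{(t_j+1)L}r^2\,dr$ up to the common factor $L$), the zero-mode estimate is sharp and compatible with taking the square root and summing over all modes simultaneously — i.e.\ that the single uniform factor $e^{-\beta'L}$, with $\beta'$ as in \eqref{restriction-of-beta}, works for the full series at once. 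After establishing the per-mode bounds, the passage to the full sum is immediate from the triangle inequality in $\ell^2$ combined with subadditivity of the square root, exactly as in \cite{HuangHarmonic}*{Theorem~3.1}.
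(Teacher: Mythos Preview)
Your approach is essentially the same as the paper's: both decompose $\ou$ into Fourier modes via orthonormality of the $\phi_i$, handle the linear mode $a_0 r\phi_0$ by the explicit polynomial computation $\int_{t_jL}^{(t_j+1)L} r^2\,dr$ (which produces the second branch of the minimum in \eqref{restriction-of-beta}), and handle each exponential mode by splitting $v^2$ into the growing piece $(a^+)^2 e^{2\lambda r}$ controlled by the $t_3$-tube, the decaying piece $(a^-)^2 e^{-2\lambda r}$ controlled by the $t_1$-tube, and the constant cross term absorbed using $e^{2(\sqrt{\mu_1}-\beta)L}>2$.

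One point of exposition to tighten: your reduction to per-mode inequalities via ``$\sqrt{\sum(\cdot)}\le \sum\sqrt{(\cdot)}$'' is not what you want, since that right-hand side is an infinite sum and does not recover $\|\ou\|_1+\|\ou\|_3$. The paper simply works with \emph{squared} norms throughout: if $\|u_i\|_2^2\le e^{-2\beta'L}(\|u_i\|_1^2+\|u_i\|_3^2)$ for every $i$, then summing over $i$ and using orthogonality gives $\|\ou\|_2^2\le e^{-2\beta'L}(\|\ou\|_1^2+\|\ou\|_3^2)\le e^{-2\beta'L}(\|\ou\|_1+\|\ou\|_3)^2$, and taking square roots finishes. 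Your closing remark about the $\ell^2$ triangle inequality is the correct salvage and is equivalent; just drop the subadditivity step. Also, the discrepancy you flagged between $L^3(t^2+t+\tfrac13)$ and $t_j^2+Lt_j+\tfrac{L^2}{3}$ is a genuine notational slip in the paper (the proof integrates over $[t_j,t_j+L]$ rather than $[t_jL,(t_j+1)L]$), so your computation is in fact the consistent one.
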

\begin{remark}\label{monotonicity-three-circles-theorem}
   The advantage of (\ref{balanced-version-three-circles-theorem}) is that, on the union  
\[
\bigcup_{j=0}^{\infty}\mc{T}_{[t_jL,(t_j+1)L]},
\]  
for any fixed \(\beta'\) satisfying (\ref{restriction-of-beta}), we can choose \(\beta''>\beta'\) slightly larger and obtain the inequality  
\[
\|\ou\|_{t_jL,(t_j+1)L}\;\leq\; e^{-\beta'' L}\Big(\|\ou\|_{t_{j-1}L,(t_{j-1}+1)L}+\|\ou\|_{t_{j+1}L,(t_{j+1}+1)L}\Big), 
\quad j\in \mathbb{N}.
\]

From this, one shows that there exists \(\tilde{L}(\beta',\beta'')\) such that whenever \(L>\tilde{L}\), we have  
\begin{align*}
&\text{either}\quad \|\ou\|_{t_{j-1}L,(t_{j-1}+1)L}\;\geq\; e^{2\beta' L}\,\|\ou\|_{t_jL,(t_j+1)L}\\
&\text{or}\quad
\|\ou\|_{t_{j+1}L,(t_{j+1}+1)L}\;\geq\; e^{2\beta' L}\,\|\ou\|_{t_jL,(t_j+1)L}.
\end{align*}

Moreover, the following monotonicity properties hold:
\begin{equation}\label{increasing-three-circles-theorem}
\begin{split}
   &\|\ou\|_{t_jL,(t_j+1)L}\;\geq\; e^{2\beta' L}\,\|\ou\|_{t_{j-1}L,(t_{j-1}+1)L}\\
   \Rightarrow &
   \|\ou\|_{t_{j+1}L,(t_{j+1}+1)L}\;\geq\; e^{2\beta' L}\,\|\ou\|_{t_jL,(t_j+1)L},
\end{split}
\end{equation}
and
\begin{equation}\label{decreasing-three-circles-theorem}
\begin{split}
   &\|\ou\|_{t_jL,(t_j+1)L}\;\geq\; e^{2\beta' L}\,\|\ou\|_{t_{j+1}L,(t_{j+1}+1)L}\\
   \Rightarrow&
   \|\ou\|_{t_{j-1}L,(t_{j-1}+1)L}\;\geq\; e^{2\beta' L}\,\|\ou\|_{t_jL,(t_j+1)L}.
\end{split}
\end{equation}

In particular, examining the proof of Theorem~\ref{three-circles-theorem} shows that the restriction  
\[
  \beta'\;\le\;\tfrac12\,\log\!\left(\frac{t_3^2+L t_3+\tfrac{L^2}{3}}{t_2^2+L t_2+\tfrac{L^2}{3}}\right)
\]
is only required to control the \(r\phi_0\)-modes in the monotone increasing case (\ref{increasing-three-circles-theorem}). Consequently, in (\ref{decreasing-three-circles-theorem}) it suffices to assume merely that  
\[
\beta' < \sqrt{\mu_1},
\]  
where \(\mu_1\) denotes the first positive eigenvalue on the cross section.

\end{remark}

\begin{proof}
     Without loss of generality, we may assume that eigenfunctions $\phi_i$, $i\geq 0$ are orthonormal in the $L^2$ sense and let $j=1,2,3$.
    \begin{itemize}
        \item $\phi_i$, $i\geq 1$. By a direct calculation, we have
        \begin{align*}
            \Big\|&\sum_{i=1}^\infty \left( a_{i}^+e^{\sqrt{\mu_i}r}+a_{i}^-e^{-\sqrt{\mu_i}r}\right)\phi_i\Big\|^2_{t_jL, (t_j+1)L}\\
    &=\sum_{i=1}^\infty\int_{t_jL}^{(t_j+1)L}\int_N  \left( a_{i}^+e^{\sqrt{\mu_i}r}+a_{i}^-e^{-\sqrt{\mu_i}r}\right)^2|\phi_i|^2dr \haus^{n-1}\\
    &=\sum_{i=1}^\infty |a_i^+|^2\frac{e^{2\sqrt{\mu_i}L}-1}{2\sqrt \mu_i}e^{2\sqrt{\mu_i}t_jL}+2a_i^+a_i^- L+ |a_i^-|^2\frac{1-e^{-2\sqrt{\mu_i}L}}{2\sqrt \mu_i}e^{-2\sqrt{\mu_i}t_jL}\\
    &\defeq\sum_{i=1}^\infty \left(C_ie^{2\sqrt{\mu_i}t_jL}+D_i+ E_ie^{-2\sqrt{\mu_i}t_jL}\right).
        \end{align*}
For fixed $\beta<\sqrt{\mu_1}$, we can choose $L$ sufficiently large such that 
\begin{align*}
C_ie^{2\sqrt{\mu_i}t_jL}&=C_ie^{2\sqrt{\mu_i}t_{j+1}L}e^{2\sqrt{\mu_i}(t_j-t_{j+1})L}\\
&\leq C_ie^{2\sqrt{\mu_i}t_{j+1}L}e^{2\sqrt{\mu_1}(t_j-t_{j+1})L}\\
&\leq C_ie^{2\sqrt{\mu_i}t_{j+1}L}e^{-2\sqrt{\mu_1}L}\leq \frac{1}{2}e^{-2\beta L}C_ie^{2\sqrt{\mu_i}t_{j+1}L}
\end{align*}
and 
\begin{align*}
    E_ie^{-2\sqrt{\mu_i}t_jL}&= E_ie^{-2\sqrt{\mu_i}t_{j-1}L}e^{-2\sqrt{\mu_i}(t_j-t_{j-1})L}\\
    &\leq E_ie^{-2\sqrt{\mu_i}t_{j-1}L}e^{-2\sqrt{\mu_1}(t_j-t_{j-1})L}\\
    &\leq E_ie^{-2\sqrt{\mu_i}t_{j-1}L}e^{-2\sqrt{\mu_1}L}\leq \frac{1}{2}e^{-2\beta L} E_ie^{-2\sqrt{\mu_i}t_{j-1}L}.
\end{align*}
Besides, by Cauchy--Schwarz inequality and Taylor expansion, we have
\begin{align*}
    |D_i|\left(1-2e^{-2\beta L}\right)\leq \frac{1}{2}e^{-2\beta L}\left(C_ie^{2\sqrt{\mu_i}t_{j+1}L}+E_ie^{-2\sqrt{\mu_i}t_{j-1}L}\right).
\end{align*}
Combining them together, we have
\begin{align*}
    C_i&e^{2\sqrt{\mu_i}t_jL}+D_i+ E_ie^{-2\sqrt{\mu_i}t_jL}\\
    &\leq \frac{1}{2}e^{-2\beta L}C_ie^{2\sqrt{\mu_i}t_{j+1}L}+\frac{1}{2}e^{-2\beta L}E_ie^{-2\sqrt{\mu_i}t_{j-1}L}\\
&+\frac{1}{2}e^{-2\beta L}\left(C_ie^{2\sqrt{\mu_i}t_{j+1}L}+E_ie^{-2\sqrt{\mu_i}t_{j-1}L}\right)+2e^{-2\beta L}D_i\\
& \leq e^{-2\beta L}\left( C_ie^{2\sqrt{\mu_i}t_{j+1}L}+D_i+ E_ie^{-2\sqrt{\mu_i}t_{j+1}L}\right)\\
&+e^{-2\beta L}\left( C_ie^{2\sqrt{\mu_i}t_{j-1}L}+D_i+ E_ie^{-2\sqrt{\mu_i}t_{j-1}L}\right).
\end{align*}

 \item $\phi_0$. Without loss of generality, we may assume that $a_0=1$ and $\int_N |\phi_0|^2\haus^{n-1}=1$. By a direct calculation, we have
    \begin{align}\label{eq:three-circles-theorem-rB}
       \Big\|a_0 r\phi_0\Big\|^2_{t_jL, (t_j+1)L}=&\int_{t_j}^{t_j+L}\int_N r^2 |\phi_0|^2 dr \haus^{n-1}=Lt_j^2+L^2t_j+\frac{L^3}{3}.
    \end{align}
   Therefore, 
\begin{align*}
    \Big\|a_0 r\phi_0\Big\|^2_{t_2L, (t_2+1)L}\leq e^{-2\beta' L} \left(\Big\|a_0 r\phi_0\Big\|^2_{t_1L, (t_1+1)L}+\Big\|a_0 r\phi_0\Big\|^2_{t_3L, (t_3+1)L}\right)
\end{align*}
follows immediately from the definition of $\beta'$. Similarly, we can handle $\tilde a rg_N$.   
   \end{itemize}
  Combining them together, we obtain the desired estimate (\ref{balanced-version-three-circles-theorem}). 
\end{proof}

\begin{theorem}
    Under the uniqueness of the asymptotic limit, suppose that $u$ is a harmonic function on the end of $M^n$. There exists $\epsilon_0$ such that the statement in Theorem \ref{three-circles-theorem} holds for $u$ on the tube $\mT_{[R, R+l]}$ as long as
    \begin{align*}
        \dist_{\mathrm{GH}}\!\left(
(\mT_{[R,R+L]},\gamma_R),\,
([0,L]\times N,(0,x))
\right)
<\epsilon_0.
    \end{align*}
\end{theorem}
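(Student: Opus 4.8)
The plan is to argue by contradiction, combining the uniform Gromov--Hausdorff control of long tubes coming from the uniqueness of the asymptotic limit (as in \eqref{eq:uniform-cylinder} and Lemma~\ref{criterion-of-uniqueness}) with a compactness argument for harmonic functions under pointed measured Gromov--Hausdorff convergence; this is the scheme of \cite{YanZhuuniqueness}*{Theorem~5.5}. Fix the parameters $L$, $\beta$ and an admissible triple $t_1<t_2<t_3$ from Theorem~\ref{three-circles-theorem}, choose $\Lambda$ large enough that the three sub-tubes $\mT_{[t_jL,(t_j+1)L]}$ are compactly contained in the open tube $\mT_{(0,\Lambda)}$ (shifting the $t_j$ if necessary), and fix an exponent $\beta''$ strictly below the supremum of the admissible exponents $\beta'$ of \eqref{restriction-of-beta}, together with a valid $\beta'$ with $\beta'>\beta''$. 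If no such $\epsilon_0$ existed, there would be radii $R_k\to\infty$, harmonic functions $u_k$ on $\mT_{[R_k,R_k+\Lambda]}$, and $\epsilon_k$-Gromov--Hausdorff approximations $\psi_k\colon([0,\Lambda]\times N,(0,x))\to(\mT_{[R_k,R_k+\Lambda]},\gamma_{R_k})$ with $\epsilon_k\downarrow0$ — built from the Busemann function as in Remark~\ref{rmk:GHAvia-u}, so that $\psi_k$ sends $\{b_\gamma=R_k+t\}$ into a small neighbourhood of $\{t\}\times N$ and the three sub-tubes and the marked point are matched consistently — such that, after transporting $u_k$ by $\psi_k$ and keeping the notation $\|\cdot\|_{a,b}$ for the resulting $L^2$-norms on $[0,\Lambda]\times N$,
\[
\|u_k\|_{t_2L,(t_2+1)L}>e^{-\beta'' L}\bigl(\|u_k\|_{t_1L,(t_1+1)L}+\|u_k\|_{t_3L,(t_3+1)L}\bigr).
\]
A counterexample is not identically zero, so I normalise $\sum_{j=1}^{3}\|u_k\|_{t_jL,(t_j+1)L}^2=1$; then the three sub-tube norms are bounded.

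Next I would pass to the limit. Since $(M,g)$ is noncollapsed with $\Ric_g\ge0$, one has $\mT_{[R_k,R_k+\Lambda]}\xrightarrow{\mathrm{pmGH}}([0,\Lambda]\times N,\dist,\haus^n)$, and the harmonic functions $u_k$ inherit uniform local $H^{1,2}$ bounds together with uniform local Lipschitz (and hence Hölder) bounds on compact subsets of $(0,\Lambda)\times N$, via the gradient estimate for harmonic functions on $\RCD(0,n-1)$ limits, depending only on the normalisation and the geometry of the limit cylinder. Invoking the convergence theory for harmonic functions under pmGH limits — Mosco convergence of the Cheeger energies and stability of harmonicity, see \cite{Ambrosio-Honda2018} — a subsequence of $u_k\circ\psi_k$ converges, strongly in $L^2_{\mathrm{loc}}$, weakly in $H^{1,2}_{\mathrm{loc}}$ and locally uniformly, to a harmonic function $\ou$ on $(0,\Lambda)\times N$. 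Strong $L^2$-convergence on the closures of the three sub-tubes makes the normalisation and the failed inequality pass to the limit, so that $\ou\not\equiv0$ and
\[
\|\ou\|_{t_2L,(t_2+1)L}\ge e^{-\beta'' L}\bigl(\|\ou\|_{t_1L,(t_1+1)L}+\|\ou\|_{t_3L,(t_3+1)L}\bigr).
\]

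Finally I would derive the contradiction. By separation of variables and the spectral theory of $-\Delta_N$ — the series representation recorded just before Theorem~\ref{three-circles-theorem}, cf.\ \cite{HuangHarmonic} — the harmonic function $\ou$ on $(0,\Lambda)\times N$ has the form required by Theorem~\ref{three-circles-theorem}, which therefore gives
\[
\|\ou\|_{t_2L,(t_2+1)L}\le e^{-\beta' L}\bigl(\|\ou\|_{t_1L,(t_1+1)L}+\|\ou\|_{t_3L,(t_3+1)L}\bigr).
\]
The right-hand side is strictly positive: were it zero, $\ou$ would vanish on two sub-tubes and hence identically by unique continuation, contradicting $\ou\not\equiv0$. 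Since $\beta'>\beta''$, this display strictly improves the limiting inequality, a contradiction. This produces $\epsilon_0=\epsilon_0(L,\beta,\beta'',t_1,t_2,t_3,N)$; the version uniform over all admissible triples inside a fixed tube follows because, up to translation along the $\R$-factor, only finitely many integer triples are relevant, or alternatively by a diagonal argument over the triples.

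The main obstacle is the passage to the limit: one must ensure that harmonic functions on the tubes $\mT_{[R_k,R_k+\Lambda]}$ converge strongly enough in $L^2$ on the \emph{closed} sub-tubes that both the three circles ratio and the harmonicity survive, and that the limit $\ou$ is genuinely nontrivial. This is exactly where the $\RCD(0,n-1)$ structure of $[0,\Lambda]\times N$ and the noncollapsing hypothesis enter, through the uniform gradient and Hölder estimates for harmonic functions and the stability of the Dirichlet problem under pmGH convergence; once these are in place, the remainder of the argument is soft.
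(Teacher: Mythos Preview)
Your proposal is correct and follows essentially the same contradiction scheme as the paper's proof, which simply cites \cite{YanZhuuniqueness} and passes a normalised counterexample sequence to a harmonic limit $\bar u$ on the cylinder to contradict Theorem~\ref{three-circles-theorem}. You have filled in precisely the details the paper leaves implicit (normalisation, pmGH compactness of harmonic functions via \cite{Ambrosio-Honda2018}, nontriviality of the limit, and the $\beta''<\beta'$ trick to get a strict contradiction).
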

\begin{proof}
    In the same spirit in \cite{YanZhuuniqueness}, we prove by contradiction. If it fails, we can find a sequence $\{R_i\}_{i=1}^{\infty}$ tending to the infinity while (\ref{balanced-version-three-circles-theorem}) is not true. Due to the asymptotically cylindrical property, we know that $u$ converges to a harmonic function $\bar u$ on the cylinder $\overline{N}$ after normalization, which yields the contradiction.
\end{proof}

We are now ready to prove the main result of this section.

\begin{proof}[Proof of Theorem \ref{thm:Existence-of-linear-growth-harmonic-function}]
Let $\{L_j\}_{j=1}^{\infty}$ with $L_j\to \infty$ as $j\to \infty$ and $\{R_i\}_{i=1}^{\infty}$ with $R_i\to \infty$ as $i\to \infty$. For each fixed $L_j$, by Theorem \ref{harmonic-replacement}, there exist harmonic functions $u_{i,j}$ defined on $\mT_{[R_i, R_i+L_j]}$ which converge strongly in $H^{1,2}$ to the harmonic function $r$ on $[0,L_j]\times N$.    
    
We can assume that for any $R\ge R_0$ and any $L>0$ such that
\begin{align*}
        \dist_{\mathrm{GH}}\!\left(
(\mT_{[R,R+L]},\gamma_R),\,
([0,L]\times N,(0,x))
\right)
<\epsilon_0.
    \end{align*}
Without loss of generality, we can fix $R_{i_0}>R_0$ such that Theorem \ref{three-circles-theorem} holds on $\mT_{[R_{i_0}, R_{i_0}+L_j]}$ for any $j$. It implies that $\{u_{i_0, j}\}_{j=1}^{\infty}$ has uniform upper bound in $H^{1,2}$ and satisfies the uniform three circles theorem. Therefore, by Arzelà--Ascoli theorem, $\{u_{i_0, j}\}_{j=1}^{\infty}$ converges to a harmonic function $u$ locally. In particular, by Theorem \ref{three-circles-theorem}, $u$ is non-constant and is asymptotic to the Busemann function.
\end{proof}

\begin{remark}\label{rmk:rigidity-of-harmonic-function-polynomial-growth}
    Now suppose that $u$ is a harmonic function on the end of $M^n$ with polynomial but
superlinear growth, that is,
\begin{align}\label{eq:poly-growth}
    |u(x)| \le C\bigl(1+\dist(x,p)^N\bigr)
    \quad \text{for some } C>0 \text{ and } N\in \mathbb{N},
\end{align}
and
\begin{align}\label{eq:superlinear-growth}
    \lim_{b_\gamma(x)\to\infty}\frac{|u(x)|}{b_\gamma(x)}=+\infty.
\end{align}
Assume moreover that $u\in H^{1,2}_{\mathrm{loc}}(M)$.

By Theorem~\ref{harmonic-replacement}, for any fixed $L>0$ and any sequence
$R_i\to\infty$, the restrictions
$u|_{\mT_{[R_i,R_i+L]}}$ subconverge to a harmonic function $\bar u$ on
$[0,L]\times N$.
However, the superlinear growth condition \eqref{eq:superlinear-growth} is
incompatible with the three circles inequality in
Theorem~\ref{three-circles-theorem}, which forces any limit harmonic function on the
cylinder to have at most linear growth in the $\R$–direction.
This contradiction shows that no harmonic function satisfying
\eqref{eq:poly-growth}–\eqref{eq:superlinear-growth} can exist. We therefore claim that every polynomial growth harmonic function on the end
of $M$ must in fact have linear growth.
\end{remark}

\bibliographystyle{amsalpha} 
\bibliography{uniqueness}

\end{document}